\documentclass{amsart}%
\usepackage{amsfonts}
\usepackage{amsmath}
\usepackage{amssymb}
\usepackage{graphicx}
\usepackage{xcolor}%
\providecommand{\U}[1]{\protect\rule{.1in}{.1in}}
\newtheorem{theorem}{Theorem}[section]
\newtheorem{lemma}[theorem]{Lemma}
\theoremstyle{definition}
\newtheorem{definition}[theorem]{Definition}
\newtheorem{example}[theorem]{Example}

\theoremstyle{remark}
\newtheorem{remark}[theorem]{Remark}
\theoremstyle{plain}

\newtheorem{corollary}[theorem]{Corollary}
\newtheorem{notation}[theorem]{Notation}
\newtheorem{proposition}[theorem]{Proposition}
\numberwithin{equation}{section}
\begin{document}
\title[Higher-order homogeneous hypoelliptic operators]{Fundamental solution for higher-order homogeneous hypoelliptic operators
structured on H\"{o}rmander vector fields}
\author{Stefano Biagi}
\address[S. Biagi and M. Bramanti]{Dipartimento di Matematica. Politecnico di Milano.\\
\indent Via Bonardi 9. 20133 Milano. Italy.}
\email[S.\,Biagi]{stefano.biagi@polimi.it}
\author{Marco Bramanti}
\email[M.\,Bramanti]{marco.bramanti@polimi.it}
\subjclass[2000]{Primary 35H10; Secondary 35G05, 35A08}
\date{\today}
\keywords{higher-order hypoelliptic equations; H\"{o}rmander vector fields; fundamental solutions}

\begin{abstract}
We introduce and study a new class of higher-order differential operators
defined on $\mathbb{R}^{n}$, which are built with H\"{o}rmander vector fields,
homogeneous w.r.t.\,a family of dilations (but not left-invariant w.r.t.\,any
structure of Lie group) and have a structure such that a suitably lifted
version of the operator is hypoelliptic. We call these operators
\textquotedblleft generalized Rockland operators\textquotedblright. We prove
that these operators are themselves hypoelliptic and, under a natural
condition on the homogeneity degree, possess a global fundamental solution
$\Gamma\left(  x,y\right)  $ which is jointly homogeneous in $\left(
x,y\right)  $ and satisfies sharp pointwise estimates. Our theory can be
applied also to some higher-order heat-type operators and their fundamental solutions.

\end{abstract}
\maketitle

\section{Introduction}

\label{sec:Intro}

\noindent\textbf{Second order hypoelliptic operators} \vspace{0.1cm}

\noindent The theory of second order hypoelliptic operators is deeply related
to the notion of system of H\"{o}rmander vector fields, in view of the famous
hypoellipticity theorem proved by H\"{o}rmander in \cite{Hor}. Assume that%
\[
X_{0},X_{1},...,X_{m}%
\]
is a family of smooth real vector fields defined in some open set
$\Omega\subset\mathbb{R}^{n}$ such that the Lie algebra generated by these
$X_{i}$ at every point of $\Omega$ has rank $n$. This means that if we define
the \emph{commutator} of two vector fields $X,Y$ letting, as usual,%
\[
\left[  X,Y\right]  =XY-YX,
\]
then, among the vector fields $X_{i}$ and those obtained taking their
commutators $\left[  X_{i},X_{j}\right]  $, the iterated commutators $\left[
\left[  X_{i},X_{j}\right]  ,X_{k}\right]  $, and so on up to a certain step,
at every point of $\Omega$ we can find $n$ independent vectors. In this case,
we say that $X_{0},X_{1},...,X_{m}$ are \emph{a system of H\"{o}rmander vector
fields}, or that they satisfy \emph{H\"{o}rmander's condition} in $\Omega$.
Then H\"{o}rmander's theorem says that the operator%
\begin{equation}
L=\sum_{i=1}^{m}X_{i}^{2}+X_{0}, \label{Hor}%
\end{equation}
which under the above assumptions is called a \emph{H\"{o}rmander operator},
is \emph{hypoelliptic} in $\Omega$, that is for every distribution
$u\in\mathcal{D}^{\prime}\left(  \Omega\right)  $ and every open subset
$\Omega^{\prime}\subset\Omega$, if $Lu\in C^{\infty}\left(  \Omega^{\prime
}\right)  $ then $u\in C^{\infty}\left(  \Omega^{\prime}\right)  $.

While many \emph{local }results have been proved for general H\"{o}rmander
operators (\ref{Hor}) (see e.g. \cite{Bo}, \cite{RS}, \cite{NSW}, \cite{J},
\cite{SC}, \cite{JSC}), \emph{global} results have been established only in
special cases, in particular when $X_{0},X_{1},...,X_{m}$ are left-invariant
in $\mathbb{R}^{n}$, with respect to a Lie group \textquotedblleft
translation\textquotedblright, and the operator $L$ is $2$-homogeneous with
respect to a family of (diagonal, nonisotropic) dilations. This is the context
of H\"{o}rmander operators on homogeneous groups \textquotedblleft stratified
of type II\textquotedblright, in the language of Rothschild-Stein \cite{RS},
or, when $X_{0}$ is lacking, the simpler case of sublaplacians on stratified
(Carnot) groups (precise definitions will be given in Section
\ref{sec left-invariant}). In this situation, Folland \cite{Fo2} has shown the
existence and good properties of a global homogeneous fundamental solution,
and has shown regularity results in the scale of Sobolev spaces adapted to the
vector fields. See also \cite[Chap.\,8]{BBbook} for a complete proof of
\emph{global} regularity results in the scale of both Sobolev and H\"{o}lder spaces.

In the more general situation when $X_{0},X_{1},...,X_{m}$ are a set of
H\"{o}rmander vector fields in $\mathbb{R}^{n}$, homogeneous w.r.t.\,a family
of dilations ($X_{0}$ is $2$-homo\-ge\-ne\-ous while $X_{1},...,X_{m}$ are
$1$-homogeneous), but not left-invariant w.r.t.\,any Lie group operation, a
global fundamental solution with good properties has been constructed for
operators (\ref{Hor}) by Biagi-Bonfiglioli in \cite{BB-lift} and studied in
\cite{BBB-fundsol}, while global estimates in Sobolev spaces have been proved
in \cite{BBB-Sob}. \bigskip

\noindent\textbf{Higher-order operators on homogeneous groups} \vspace{0.1cm}

\noindent For differential operators of order greater than $2$, with real
variable coefficients, a simple powerful sufficient condition for
hypoellipticity, like H\"{o}rmander's condition,\ does not exist, in general.
A more expressive picture can be drawn in the special context of
\emph{homogeneous groups}. A theorem by Helffer-Nourrigat \cite{HN},
culminating a research started in the context of Heisenberg groups with the
work of Rockland \cite{Ro}, and extended to the context of general homogeneous
groups by Beals \cite{Be}, states that if a differential operator $L$ is
\emph{left-invariant }and \emph{homogeneous }of some positive degree on a
homogeneous group $\mathbb{G}$ (precise definitions will be recalled later,
see Section \ref{sec left-invariant}), then $L$ is hypoelliptic if and only if
it satisfies the \emph{Rockland condition} (and in this case we say that $L$
is a \emph{Rockland operator}). The exact formulation of the Rockland
condition will not be recalled here (see \cite[Par.\,4.1.1]{FR}). Instead,
throughout this paper we will use this notion according to the following
definition, which is logically equivalent to the standard one in view of
Helffer-Nourrigat's theorem but avoids the language of representation theory,
which is involved in the statement of Rockland's condition:

\begin{definition}
[Rockland operator]\label{Def Rockland}Given a homogeneous group $\mathbb{G}$,
a \emph{Ro\-ck\-land operator} on $\mathbb{G}$ is a left-invariant
hypoelliptic differential operator $L$, homogeneous of some positive degree.
\end{definition}

An explicit class of higher-order Rockland operators can be described as
follows. Let $X_{1},...,X_{m}$ be a family of left-invariant H\"{o}rmander
vector fields on a homogeneous group $\mathbb{G}$ such that each $X_{i}$ is
homogeneous of some positive integer degree $\nu_{i}$, and let $\nu_{0}$ be
any common integer multiple of $\nu_{1},...,\nu_{m}$. Then the left-invariant,
$2\nu_{0}$ homogeneous operator%
\begin{equation}
L=\sum_{j=1}^{m}\left(  -1\right)  ^{\frac{\nu_{0}}{\nu_{j}}}X_{j}^{\frac
{2\nu_{0}}{\nu_{j}}} \label{Rock}%
\end{equation}
satisfies the Rockland condition (see \cite[Lem.\,4.1.8]{FR}), in particular
it is hypoelliptic. Therefore $L$ is a Rockland operator. Note that the
operator $L$ is a differential operator of even order, generally greater than
$2$ (as soon as some integer quotient $\nu_{0}/\nu_{j}$ is greater than $1$).

We can say that Rockland operators of the form (\ref{Rock}) generalize to
general homogeneous groups the family of sublaplacians on Carnot groups. Note,
instead, that H\"{o}rmander operators (\ref{Hor}) are not a special case of
(\ref{Rock}), unless the \emph{drift }$X_{0}$ is lacking. Nevertheless, if
(\ref{Hor}) is $2$-homogeneous and left-invariant on a homogeneous group, then
it is a Rockland operator. Note that any composition of two Rockland operators
is still a Rockland operator (because it is still left-invariant, homogeneous,
and hypoelliptic). For instance, any \emph{power} of a left-invariant
homogeneous operator of type (\ref{Hor}) or (\ref{Rock}) is a Rockland
operator. Here we have tacitly exploited a simple fact which is worthwhile to
be pointed out explicitly:

\begin{remark}
\label{Remark composition hypoelliptic}The composition of two operators $L,M$,
which are hypoelliptic in $\Omega\subseteq\mathbb{R}^{n}$, is still
hypoelliptic. Namely, if $u\in\mathcal{D}^{\prime}\left(  \Omega\right)  $ and
for some open $\Omega^{\prime}\subseteq\Omega$ we know that $L\left(
Mu\right)  \in C^{\infty}\left(  \Omega^{\prime}\right)  ,$ then $Mu\in
C^{\infty}\left(  \Omega^{\prime}\right)  $ because $L$ is hypoelliptic, and
then $u\in C^{\infty}\left(  \Omega^{\prime}\right)  $ because $M$ is hypoelliptic.
\end{remark}

\bigskip

\noindent\textbf{Our setting and main results} \vspace{0.1cm}

\noindent In this paper we are interested in generalizing the previous picture
to higher-order operators structured on H\"{o}rmander vector fields in
$\mathbb{R}^{n}$ which are homogeneous w.r.t.\,a family of dilations,
\emph{but not left-invariant} w.r.t.\,any group structure. Our setting is the
following. \bigskip

\noindent\textbf{Assumption (H1)} We assume\ that:

(a) $X_{1},...,X_{m}$ is a family of linearly independent\footnote{Here and
throughout the paper, \emph{linear independence} is meant in the
infinite-dimensional space of the smooth vector fields on $\mathbb{R}^{n}$.},
real smooth vector fields in $\mathbb{R}^{n}$, satisfying H\"{o}rmander's
condition at the origin.

(b) $\mathbb{R}^{n}$ is endowed with a family of \emph{dilations}%
\begin{equation}
\delta_{\lambda}\left(  x\right)  =\left(  \lambda^{\sigma_{1}}x_{1}%
,\lambda^{\sigma_{2}}x_{2},...,\lambda^{\sigma_{n}}x_{n}\right)  ,
\label{delta lambda}%
\end{equation}
for positive integers%
\begin{equation}
1=\sigma_{1}\leq\sigma_{2}\leq...\leq\sigma_{n}, \label{homogeneities_1}%
\end{equation}
and each $X_{i}$ is $\delta_{\lambda}$-\emph{homogeneous of positive integer
degree} $\nu_{i}$, that is%
\begin{equation}
X_{i}(u\circ\delta_{\lambda})=\lambda^{\nu_{i}}(X_{i}u)\circ\delta_{\lambda}
\label{homogeneous op}%
\end{equation}
for every $\lambda>0$ and $u\in C^{\infty}\left( \mathbb{R}^{n}\right)  $.
Moreover%
\begin{equation}
1\leq\nu_{1}\leq\nu_{2}\leq...\leq\nu_{m}. \label{homogeneities2}%
\end{equation}

The number%
\begin{equation}
q=\sum_{i=1}^{n}\sigma_{i} \label{q}%
\end{equation}
will be called \emph{homogeneous dimension}.

\begin{remark}
\label{rem:HormanderRn}By the homogeneity assumptions on $X_{1},...,X_{m}$,
H\"{o}rmander's condition actually holds at every point of $\mathbb{R}^{n}$
(see \cite[Rem.\,3.2.]{BBB-fundsol}).
\end{remark}

\begin{example}
\label{Ex H1}The following examples exhibit families of vector fields
satisfying Assumption (H1). We stress the fact that in all these examples
there cannot exist any Lie group structure in $\mathbb{R}^{n}$ making these
vector fields left-invariant. This happens because one of the vector fields
vanishes at the origin without being identically zero.

(1).\thinspace\thinspace In $\mathbb{R}^{2}$, let
\begin{align*}
X_{1}  &  =\partial_{x_{1}}\\
X_{2}  &  =x_{1}^{k}\,\partial_{x_{2}}%
\end{align*}
with $k=1,2,3...$and let%
\[
\delta_{\lambda}\left(  x_{1},x_{2}\right)  =(\lambda x_{1},\lambda^{k+h}%
x_{2})
\]
with $h=1,2,3...$ Then $X_{1},X_{2}$ satisfy H\"{o}rmander's condition at $0$
and are $\delta_{\lambda}$-homogeneous with
\begin{align*}
\nu_{1}  &  =1\\
\nu_{2}  &  =h.
\end{align*}

(2).\thinspace\thinspace In $\mathbb{R}^{n}$, let
\begin{align*}
X_{1}  &  =\partial_{x_{1}}\\
X_{2}  &  =x_{1}\partial_{x_{2}}+x_{2}\partial_{x_{3}}+\ldots+x_{n-1}%
\partial_{x_{n}}%
\end{align*}
and let%
\[
\delta_{\lambda}(x)=(\lambda x_{1},\lambda^{2}x_{2},\cdots,\lambda^{n}x_{n})
\]
Then $X_{1},X_{2}$ satisfy H\"{o}rmander's condition and are $\delta_{\lambda
}$-homogeneous with
\begin{align*}
\nu_{1}  &  =1\\
\nu_{2}  &  =1.
\end{align*}

(3).\thinspace\thinspace\ In $\mathbb{R}^{3}$, let%
\begin{align*}
X_{1}  &  =\partial_{x_{1}}\\
X_{2}  &  =x_{1}\partial_{x_{2}}+x_{2}^{2}\partial_{x_{3}}%
\end{align*}
and
\[
\delta_{\lambda}(x)=(\lambda x_{1},\lambda^{1+k}x_{2},\lambda^{2+3k}x_{3})
\]
with $k=1,2,3...$ Then $X_{1},X_{2}$ satisfy H\"{o}rmander's condition and are
$\delta_{\lambda}$-homoge\-ne\-ous with
\begin{align*}
\nu_{1}  &  =1\\
\nu_{2}  &  =k.
\end{align*}

\end{example}

Let us introduce some notation which will be used throughout the paper. For
any multi-index%
\[
I=\left(  i_{1},i_{2},...,i_{k}\right)
\]
with $i_{j}\in\left\{  1,2,...,m\right\}  $, we set%
\[
X_{I}=X_{i_{1}}X_{i_{2}}...X_{i_{k}}.
\]
We also define the \emph{length }of the multi-index,
\[
\ell\left(  I\right)  =k,
\]
and the \emph{weight} of the multi-index,%
\begin{equation}
\left\vert I\right\vert =\sum_{h=1}^{k}\nu_{i_{h}}, \label{weight}%
\end{equation}
where $\nu_{i}$ is the homogeneity degree of $X_{i}$, see
(\ref{homogeneities2}).

We are going to study a class of $\nu$-homogeneous operators of the kind%
\begin{equation}
\mathcal{L}=\sum_{|I|=\nu}c_{I}X_{I} \label{general operator L}%
\end{equation}
where $\nu$ is some positive integer and $c_{I}$ are real constants, according
to the following:

\begin{definition}
[Generalized Rockland operators, informal definition]%
\label{Def gen Rockland 1} We say that an operator $\mathcal{L}$ of kind
(\ref{general operator L}) is a \emph{generalized Rockland operator }if the
$X_{i}$ satisfy Assumption (H1), and the constants $\left\{  c_{I}\right\}
_{\left\vert I\right\vert =\nu}$ give to $\mathcal{L}$ a structure such that,
if the $X_{i}$ were \emph{also} left-invariant w.r.t.\,a Lie group structure
in $\mathbb{R}^{n}$ such that the dilations $\delta_{\lambda}$ are group
automorphisms, then $\mathcal{L}$ would be hypoelliptic.
\end{definition}

\medskip

Later (see Section \ref{sec lifting}), we will give a more formal, and
actually more general, formulation of the hypoellipticity condition contained
in Definition \ref{Def gen Rockland 1}, involving a suitable notion of
\emph{lifting }of vector fields. For the moment, the above formulation is
enough to understand the following examples and the meaning of our main results.

\begin{example}
\label{Ex H2}Classes of examples of generalized Rockland operators are the following.

(1). Let $X_{1},...,X_{m}$ be a family of vector fields satisfying Assumption
(H1) (for instance, as in Example \ref{Ex H1}) and (with the same notation of
Assumption (H1)) for any positive integer $\nu_{0}$, common multiple of
$\nu_{1},\nu_{2},...,\nu_{m}$, and any positive integer $k$, let us consider
the operators%
\begin{equation}
\mathcal{L}_{\nu_{0}}^{k}=\left(  \sum_{j=1}^{m}\left(  -1\right)  ^{\frac
{\nu_{0}}{\nu_{j}}}X_{j}^{\frac{2\nu_{0}}{\nu_{j}}}\right)  ^{k}.
\label{power of Rockland}%
\end{equation}
If the $X_{j}$ were also left-invariant on a homogeneous Lie group, this
operator would be the $k$-th power of a Rockland operator of type
(\ref{Rock}), hence it would be hypoelliptic (see Remark
\ref{Remark composition hypoelliptic}). Therefore\ the conditions in
Definition \ref{Def gen Rockland 1} are satisfied.

(1') Special cases of the above operators (\ref{power of Rockland}) are powers
of $2$-homogeneous sublaplacians%
\begin{equation}
\Delta^{k}=\left(  \sum_{j=1}^{m}X_{j}^{2}\right)  ^{k}
\label{power of sublaplacian}%
\end{equation}
or also, assuming all the vector fields $X_{1},...,X_{m}$ $1$-homogeneous,
operators of the kind%
\begin{equation}
\mathcal{L}_{\nu_{0}}=\sum_{j=1}^{m}X_{j}^{2\nu_{0}} \label{sum of even}%
\end{equation}
for some $\nu_{0}=2,3,4...$

(2) Operators of the kind%
\begin{equation}
\mathcal{L}^{k}=\left(  \sum_{j=1}^{m}X_{j}^{2}+X_{0}\right)  ^{k}
\label{power of Hormander}%
\end{equation}
when $X_{0},X_{1},...,X_{m}$ are a system of Hormander vector fields, $X_{0}$
is $2$-ho\-mo\-ge\-ne\-ous and $X_{1},...,X_{m}$ are $1$-homogeneous do not
fit in the class (\ref{power of Rockland}) but they obviously are generalized
Rockland operators because they are the $k$-th power of a (hypoelliptic)
H\"{o}rmander operator.
\end{example}

\begin{remark}
Note that while the hypoellipticity of operators (\ref{power of sublaplacian})
and (\ref{power of Hormander}) is obvious because they are powers of
hypoelliptic operators (by H\"{o}rmander's theorem), the hypoellipticity of
operators (\ref{power of Rockland}) or, in particular, (\ref{sum of even}) is
not obvious when the $X_{i}$ are not left-invariant, but will be actually
proved (see Thm \ref{Thm main hypoellipticity}).
\end{remark}

Before stating our main results, we need to recall the following:

\begin{definition}
\label{Def formal transposed}Given a differential operator $L$ on $\Omega$
(with real smooth coefficients), we will denote by $L^{\ast}$ its formal
\emph{transpose}, defined by%
\begin{equation}
\int_{\Omega}\left(  Lu\right)  \cdot v=\int u\cdot\left(  L^{\ast}v\right)
\text{ for every }u,v\in C_{0}^{\infty}\left(  \Omega\right)  .
\label{transposed}%
\end{equation}

\end{definition}

\begin{remark}
\label{remark trasposti notevoli}It is well known (and will be justified
later, see Remark \ref{rem:ConseguenzeH1H2}) that under Assumption (H1), one
has%
\[
X_{i}^{\ast}=-X_{i}.
\]
Therefore, for instance, for operators of kind (\ref{power of Rockland}), we
simply have%
\[
L^{\ast}=L,
\]
while for operators (\ref{power of Hormander}) one has%
\[
\left(  \left(  \sum_{j=1}^{m}X_{j}^{2}+X_{0}\right)  ^{k}\right)  ^{\ast
}=\left(  \sum_{j=1}^{m}X_{j}^{2}-X_{0}\right)  ^{k},
\]
where
\[
\sum_{j=1}^{m}X_{j}^{2}-X_{0}%
\]
is still a H\"{o}rmander operator.
\end{remark}

We will prove the following facts.

\begin{theorem}
[Hypoellipticity of $\mathcal{L}$]\label{Thm main hypoellipticity}A
generalized Rockland operator $\mathcal{L}$ is hypoelliptic in $\mathbb{R}%
^{n}$.
\end{theorem}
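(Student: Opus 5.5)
The plan is to transfer hypoellipticity from the lifted operator back to $\mathcal{L}$ by the classical ``hypoellipticity descends under projection'' argument, as in Rothschild--Stein. By the (formal) definition of generalized Rockland operator, there is a homogeneous group $\mathbb{G}$ on $\mathbb{R}^{N}=\mathbb{R}^{n}\times\mathbb{R}^{p}$ with left-invariant, $\delta_\lambda$-homogeneous vector fields $\widetilde{X}_{1},\dots,\widetilde{X}_{m}$ lifting $X_1,\dots,X_m$. Here ``lifting'' means
\[
\widetilde{X}_{i}=X_{i}+R_{i}(x,\xi),
\]
where each $R_i$ is a vector field acting only in the added variables $\xi\in\mathbb{R}^{p}$. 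Moreover, the lifted operator $\widetilde{\mathcal{L}}=\sum_{|I|=\nu}c_{I}\widetilde{X}_{I}$ is hypoelliptic, and hence it is a Rockland operator on $\mathbb{G}$. (This is the global lifting of Biagi--Bonfiglioli \cite{BB-lift}.) The key structural identity is then
\[
\widetilde{\mathcal{L}}\bigl(u\otimes 1\bigr)=(\mathcal{L}u)\otimes 1,\qquad (u\otimes1)(x,\xi)=u(x),
\]
valid for smooth $u$. It also holds for distributions $u\in\mathcal{D}'(\Omega)$, since $R_i$ differentiates only in $\xi$ and $X_I$ involves only $x$-derivatives with coefficients depending on $x$.

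Given $u\in\mathcal{D}'(\Omega)$ with $\mathcal{L}u\in C^\infty(\Omega')$, the proof proceeds in three steps.

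\emph{Step 1.} Define $U=u\otimes 1\in\mathcal{D}'(\Omega\times\mathbb{R}^{p})$, i.e.\ $\langle U,\varphi\rangle=\langle u,\int\varphi(\cdot,\xi)\,d\xi\rangle$.

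\emph{Step 2.} Verify the identity $\widetilde{\mathcal{L}}U=(\mathcal{L}u)\otimes 1$ in the distributional sense, by transposition. For this I use $\widetilde X_i^{*}=-\widetilde X_i$ (left-invariant homogeneous vector fields on a homogeneous group are divergence free) and $X_i^*=-X_i$ (Remark \ref{remark trasposti notevoli}). Then $\widetilde X_i^{*}=X_i^*+R_i^*$, and integrating in $\xi$ kills the $R_i^*\varphi$ terms, being $\xi$-divergences of compactly supported functions. Here one must check that $R_i$ is divergence free in $\xi$, or at least that $\int R_i^{*}\varphi\,d\xi=0$; this follows from the relation $\operatorname{div}\widetilde X_i=\operatorname{div}_xX_i+\operatorname{div}_\xi R_i=0$ together with $\operatorname{div}_x X_i=0$.

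\emph{Step 3.} Hence $\widetilde{\mathcal{L}}U$ is smooth on $\Omega'\times\mathbb{R}^p$, and by the hypoellipticity of $\widetilde{\mathcal{L}}$, $U\in C^\infty(\Omega'\times\mathbb{R}^{p})$. Finally, $U$ being smooth and independent of $\xi$, $u$ coincides on $\Omega'$ with the smooth function $x\mapsto U(x,0)$. To justify this, test against $\varphi(x)\psi(\xi)$ with $\int\psi=1$: one gets $\langle u,\varphi\rangle=\int\!\!\int U(x,\xi)\varphi(x)\psi(\xi)\,dx\,d\xi$. The $\xi$-independence of $U$ in the classical sense comes from $\partial_{\xi}U=0$ distributionally.

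The main obstacle is not this descent argument, which is routine, but making sure that the formal definition of generalized Rockland operator really provides a lifting of the exact form $\widetilde X_i=X_i+R_i$, with $R_i$ acting only in $\xi$, defined globally on $\mathbb{R}^n\times\mathbb{R}^p$ and compatible with the dilations. It is also where hypoellipticity of $\widetilde{\mathcal{L}}$ enters: either it is assumed, or, for the examples, it is deduced from the Rockland condition (e.g.\ \cite[Lem.\,4.1.8]{FR} for (\ref{Rock})) and from compositions via Remark \ref{Remark composition hypoelliptic}. If the lifting were only local (Rothschild--Stein), the same argument would still work locally, which suffices since hypoellipticity is a local property. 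I would first try to take the global lifting as given by Section \ref{sec lifting}.
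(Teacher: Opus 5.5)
Your proposal is correct and follows essentially the paper's own argument: form $u\otimes 1$, verify $\widetilde{\mathcal{L}}(u\otimes 1)=(\mathcal{L}u)\otimes 1$ in $\mathcal{D}'$, apply the hypoellipticity of $\widetilde{\mathcal{L}}$, and descend by testing against $\varphi(x)\psi(\xi)$ with $\int\psi=1$. The differences are cosmetic: the paper checks the key identity by approximating $u$ with smooth functions and using $\widetilde{\mathcal{L}}(\theta\circ\pi_n)=(\mathcal{L}\theta)\circ\pi_n$ rather than by transposition, and it takes $\int U(\cdot,\xi)\psi_0(\xi)\,d\xi$ as the smooth representative instead of $U(\cdot,0)$; also, no divergence-free condition on $R_i$ is needed, because writing $R_i=\sum_j r_{ij}\partial_{\xi_j}$ one always has $R_i^{*}\varphi=-\sum_j\partial_{\xi_j}(r_{ij}\varphi)$, whose $\xi$-integral vanishes.
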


\begin{theorem}
[Liouville-type theorem for $\mathcal{L}$]\label{Thm main Liouville} Assume
that both $\mathcal{L}$ and $\mathcal{L}^{*}$ are generalized Rockland
operators, and let $\Lambda\in\mathcal{S}^{\prime}(\mathbb{R}^{n})$ be a
tempe\-red distribution satisfying $\mathcal{L}\Lambda=0$ in $\mathcal{S}%
^{\prime}(\mathbb{R}^{n})$. Then, there exists a \emph{polynomial function}
$p=p(x)\in C^{\infty}(\mathbb{R}^{n})$ such that
\begin{equation}
\text{$\Lambda=p$ in $\mathcal{S}^{\prime}(\mathbb{R}^{n})$}.
\label{eq:LambdapolinomioGeneral}%
\end{equation}
In particular, any bounded solution of $\mathcal{L}\Lambda=0$ in
$\mathcal{S}^{\prime}(\mathbb{R}^{n})$ is a constant.
\end{theorem}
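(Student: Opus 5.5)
The plan is the classical Fourier-support argument for Liouville theorems, with hypoellipticity supplied by Theorem~\ref{Thm main hypoellipticity} and a homogeneity (dilation) argument in place of ellipticity. The key point is that $\mathcal{L}$ is $\delta_\lambda$-homogeneous of degree $\nu$ with polynomial coefficients. This holds because each $X_i$, being $\delta_\lambda$-homogeneous of degree $\nu_i$ with smooth coefficients, has \emph{polynomial} coefficients: the coefficient of $\partial_{x_k}$ in $X_i$ is $\delta_\lambda$-homogeneous of degree $\sigma_k-\nu_i$, and a smooth function with this property is a polynomial. Hence $\mathcal{L}$ maps $\mathcal{S}'$ to $\mathcal{S}'$, and the hypothesis $\mathcal{L}\Lambda=0$ makes sense. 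By Theorem~\ref{Thm main hypoellipticity}, $\Lambda$ is then a $C^\infty$ function, and a smooth tempered distribution has polynomial growth of all derivatives only in a weak sense. The goal is to upgrade this to $\Lambda$ being a polynomial.

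\textbf{Step 1 (dilation argument).} For $\lambda>0$ set $\Lambda_\lambda=\Lambda\circ\delta_\lambda$, which is still in $\mathcal{S}'$ and still solves $\mathcal{L}\Lambda_\lambda=\lambda^{\nu}(\mathcal{L}\Lambda)\circ\delta_\lambda=0$. Decompose $\Lambda$ via a homogeneous Littlewood--Paley partition: write $\widehat{\Lambda}$ (Fourier transform) and choose $\psi\in C_0^\infty$ supported in a homogeneous annulus $\{1/2<\|\xi\|_*<2\}$ for the dual dilations. I would aim to show that $\widehat{\Lambda}$ is supported at $\{0\}$, which is equivalent to $\Lambda$ being a polynomial. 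Equivalently, I want $\Lambda*\check\phi=0$ for every Schwarz $\phi$ with $\widehat\phi$ vanishing near the origin.

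\textbf{Step 2 (the core estimate).} Here the Liouville statement is reduced to a uniform interior regularity estimate. Hypoellipticity of $\mathcal{L}$ and $\mathcal{L}^*$ (both are generalized Rockland by assumption) combined with homogeneity yields scale-invariant estimates of the form
\[
\sup_{B(0,1)}|X_Iu|\le C\,\|u\|_{\text{weak},B(0,2)}
\]
for solutions of $\mathcal{L}u=0$, with $B$ the homogeneous balls. These follow by closed-graph arguments on the space of distributional solutions in $B(0,2)$: hypoellipticity makes the space of $\mathcal{D}'$-solutions coincide with the $C^\infty$-solutions, and the two topologies agree by Banach/Fréchet open mapping. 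The role of $\mathcal{L}^*$ is to allow testing the equation against solutions of the adjoint, i.e.\ to control the weak norm by pairing. Rescaling by $\delta_R$, and using that $\Lambda$ tempered gives $|\langle \Lambda,\varphi(\delta_{1/R}\cdot)\rangle|\le C R^{N}$ for some $N$, I obtain $\sup_{B(0,R)}|X_I\Lambda|\le CR^{N'-|I|}$. The same holds for Euclidean derivatives $\partial^\alpha\Lambda$, with decay $R^{N'-\text{weight}(\alpha)}$, since $\partial_{x_k}$ is $\delta$-homogeneous of degree $-\sigma_k$ and Euclidean derivatives can be controlled locally by the hypoelliptic estimates. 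Letting $R\to\infty$, every derivative $\partial^\alpha\Lambda$ of homogeneous weight $>N'$ vanishes identically, so $\Lambda$ is a polynomial.

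\textbf{Main obstacle.} The step I expect to be hardest is Step 2. The operator is not translation-invariant, so the estimate can only be rescaled around the origin, not around arbitrary points. Controlling $\partial^\alpha\Lambda$ at every point of $B(0,R)$ by the single dilation is fine because the ball $B(0,R)$ is the dilate of $B(0,1)$. However, I must make sure the local estimate on $B(0,1)$ is uniform, and that the temperedness bound transfers to a bound on the rescaled weak norm. My first approach is to use the closed-graph estimate with the $\mathcal{D}'(B(0,2))$ topology replaced by a fixed Sobolev norm $H^{-s}$, where $s$ is the order of $\Lambda$ on compacts. That $\Lambda\in H^{-s}_{loc}$ with $\|\Lambda\circ\delta_R\|_{H^{-s}(B(0,2))}\le CR^{N}$ follows from $\Lambda$ being a finite-order tempered distribution.

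Finally, a bounded solution gives $N'=0$ in the above, so all derivatives of positive weight vanish and $\Lambda$ is constant.
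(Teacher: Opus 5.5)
Your argument is correct, but it takes a genuinely different route from the paper.

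\textbf{The paper's route.} The paper never works with solutions on $\mathbb{R}^n$ directly. It lifts $\Lambda$ to $\widetilde{\Lambda}=\Lambda\otimes 1\in\mathcal{S}'(\mathbb{R}^N)$ and checks, exactly as in the proof of Theorem~\ref{Thm main hypoellipticity}, that $\widetilde{\mathcal{L}}\widetilde{\Lambda}=0$. It then invokes the Liouville theorem on homogeneous groups (Theorem~\ref{thm:LiouvilleGroups}). That theorem needs both $\widetilde{\mathcal{L}}$ and $\widetilde{\mathcal{L}}^{\ast}$ hypoelliptic, and this is where the hypothesis on $\mathcal{L}^{\ast}$ enters. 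It yields $\widetilde{\Lambda}=P(x,\xi)$ polynomial, and the paper recovers $\Lambda=\int P(\cdot,\xi)\psi_0(\xi)\,d\xi$.

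\textbf{Your route.} You stay in $\mathbb{R}^n$ and use only three ingredients: hypoellipticity of $\mathcal{L}$ (Theorem~\ref{Thm main hypoellipticity}), polynomial coefficients, and $\delta_\lambda$-homogeneity.
\begin{itemize}
\item The closed graph theorem on the Fr\'echet space $\{u\in H^{-s}_{\mathrm{loc}}(B(0,2)):\mathcal{L}u=0\}$ gives $\sup_{B(0,1)}|\partial^\alpha u|\le C_\alpha\|\chi u\|_{H^{-s}}$. The constant is independent of $R$, because $\Lambda\circ\delta_R$ solves the same equation.
\item Temperedness of order $k$ gives $\|\chi(\Lambda\circ\delta_R)\|_{H^{-s}}\le CR^{M}$ for $s>k+n/2$ and $R\ge 1$.
\item Since $\partial^\alpha(\Lambda\circ\delta_R)=R^{\mathcal{H}_\delta(\alpha)}(\partial^\alpha\Lambda)\circ\delta_R$, every $\partial^\alpha\Lambda$ with $\mathcal{H}_\delta(\alpha)>M$ vanishes. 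Only dilations about the origin are needed, so the lack of translation invariance is harmless.
\end{itemize}

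\textbf{What each route buys.} Yours is self-contained, with no group Fourier analysis, and proves more: the assumption that $\mathcal{L}^{\ast}$ is generalized Rockland is superfluous. This is essentially Luo's result cited in Remark~\ref{Remark Liouville}. The paper's proof is shorter given the black-box group theorem and illustrates the lifting technique used throughout.

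\textbf{Cleanups.}
\begin{itemize}
\item Step 1 (Littlewood--Paley, support of $\widehat{\Lambda}$) is never used. Only the observation $\mathcal{L}(\Lambda\circ\delta_\lambda)=0$ survives from it.
\item The open-mapping argument must be run in $H^{-s}_{\mathrm{loc}}$, not in $\mathcal{D}'$, which is not Fr\'echet. You do make this switch.
\item The role you assign to $\mathcal{L}^{\ast}$ (``controlling the weak norm by pairing'') is spurious; nothing in your proof uses it.
\item With the convention (\ref{homogeneous op}), $\partial_{x_k}$ is homogeneous of degree $+\sigma_k$. The exponent you actually use, $R^{N'-\mathcal{H}_\delta(\alpha)}$, is nonetheless correct.
\end{itemize}
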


\begin{remark}
In Theorem \ref{Thm main Liouville}, as well as in the subsequent Theorems
\ref{thm:ExistenceEstimLhGeneral} and \ref{Thm estimates on Gamma}, we require
that both $\mathcal{L}$ and $\mathcal{L}^{\ast}$ are generalized Rockland
operators. As already noted, in some interesting cases we have $\mathcal{L}%
^{\ast}=\mathcal{L}$; when this is not the case, the two assumptions appear,
as far as we know, to be logically independent: indeed, already on homogeneous
groups, the Rockland property of $\mathcal{L}$ and that of $\mathcal{L}^{\ast
}$ are logically independent in general.
\end{remark}

\begin{remark}
\label{Remark Liouville}A general result by Rothschild \cite{R} shows that a
homogeneous left-invariant operator on a homogeneous group is hypoelliptic if
and only if it satisfies a Liouville property. Later, Luo \cite{L} has proved
that Liouville theorem still holds for homogeneous hypoelliptic operators
(without requiring left invariance). Therefore the above Theorem
\ref{Thm main Liouville} is actually a corollary of Theorem
\ref{Thm main hypoellipticity}, in view of \cite{L}. We think that the short
independent proof of Theorem \ref{Thm main Liouville} that we will present in
Section \ref{subsec Liouville} can have an independent interest.
\end{remark}

\begin{theorem}
[Fundamental solution for $\mathcal{L}$]\label{thm:ExistenceEstimLhGeneral}Let%
\[
\mathcal{L}=\sum_{|I|=\nu}c_{I}X_{I}%
\]
be a generalized Rockland operator, and assume that%
\[
\nu<q,
\]
where $\nu$ is the homogeneity degree of $\mathcal{L}$ and $q$ is the
homogeneous dimension of $\mathbb{R}^{n}$. Assume that also $\mathcal{L}^{*}$
is a generalized Rockland operator.

Then, there exists a \emph{global fundamental solution} $\Gamma(x,y)$ of
$\mathcal{L}$, that is,

\begin{enumerate}
\item[\emph{(a)}] for every fixed $x,y\in\mathbb{R}^{n}$, we have
$\Gamma(\cdot,y)\in L_{\mathrm{loc}}^{1}(\mathbb{R}^{n}),\,\Gamma(x,\cdot)\in
L_{\mathrm{loc}}^{1}(\mathbb{R}^{n})$; also, $\Gamma\in L_{\mathrm{loc}}%
^{1}(\mathbb{R}^{2n})$ (in the joint variables); \vspace{0.1cm}

\item[\emph{(b)}] for every $\varphi\in C_{0}^{\infty}(\mathbb{R}^{n})$, we
have
\[
\int_{\mathbb{R}^{n}}\Gamma(x,y)\mathcal{L}^{\ast}\varphi(x)\,dx=-\varphi
(y)\text{ \ }\forall\,\,y\in\mathbb{R}^{n};
\]

\item[\emph{(c)}] for every $\varphi\in C_{0}^{\infty}(\mathbb{R}^{n})$, the
function%
\[
u\left(  x\right)  =\int_{\mathbb{R}^{n}}\Gamma(x,y)\varphi(y)\,dy
\]
is $L_{loc}^{1}\left(  \mathbb{R}^{n}\right)  $ and satisfies $\mathcal{L}%
u=-\varphi$ in the distributional sense (therefore $u$ is a.e. equal to a
smooth function $u^{\ast}$ satisfying $\mathcal{L}u^{\ast}=-\varphi$ pointwise).
\end{enumerate}

Moreover, $\Gamma$ satisfies the following properties:

\begin{itemize}
\item[\emph{(I)}] setting $\mathbb{D} = \{(x,y)\in\mathbb{R}^{2n}:x = y\}$, we
have $\Gamma\in C^{\infty}(\mathbb{R}^{2n}\setminus\mathbb{D})$.

\item[\emph{(II})] $\Gamma$ is jointly $\delta_{\lambda}$-homogeneous of
degree $\nu-q$, that is,
\[
\Gamma(\delta_{\lambda}(x),\delta_{\lambda}(y))=\lambda^{\nu-q}\,\Gamma
(x,y)\quad\text{for all $(x,y)\in\mathbb{R}^{2n}\setminus\mathbb{D}$ and
$\lambda>0$}.
\]

\item[\emph{(III)}] For every fixed $y\in\mathbb{R}^{n}$, we have
\begin{equation}
\label{eq:Gammavanishesx}\Gamma(x,y)\to0\quad\text{as $|x|\to+\infty$}.
\end{equation}

\item[\emph{(IV)}] The function $\Gamma$ is \emph{unique} within the class of
functions $\gamma\left(  x,y\right)  $ satisfying the properties in points (b)
and (III) above.

\item[\emph{(V)}] For this uniquely defined $\Gamma$, the following identity
holds:
\begin{equation}
\Gamma^{\ast}(x,y)=\Gamma(y,x)\quad\text{for every $x\neq y\in\mathbb{R}^{n}$%
}, \label{eq:GammaGammastarGC}%
\end{equation}
where $\Gamma^{*}$ is the global fundamental solution of $\mathcal{L}^{*}$. In
particular, if $\mathcal{L}$ is \emph{formally self-adjoint} \emph{(}that is,
$\mathcal{L}^{\ast}=\mathcal{L}$), then
\[
\Gamma(x,y)=\Gamma(y,x)\quad\text{for every $x\neq y\in\mathbb{R}^{n}$}.
\]

\end{itemize}
\end{theorem}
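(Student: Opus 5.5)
The construction follows the lifting-and-saturation strategy of Biagi--Bonfiglioli \cite{BB-lift} for second-order H\"ormander operators, adapted to higher order. By the lifting procedure announced in Section \ref{sec lifting}, there is a homogeneous Carnot-type group $\mathbb{G}=(\mathbb{R}^{N},\ast,D_\lambda)$ with $N=n+p$, whose dilations are $D_\lambda(x,\xi)=(\delta_\lambda(x),\delta'_\lambda(\xi))$ and whose homogeneous dimension is $Q=q+q'$. On $\mathbb{G}$ there are left-invariant $D_\lambda$-homogeneous vector fields $\widetilde X_1,\dots,\widetilde X_m$ with $\widetilde X_i=X_i+R_i(x,\xi)$, where each $R_i$ acts only in $\xi$. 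The lifted operator $\widetilde{\mathcal{L}}=\sum_{|I|=\nu}c_I\widetilde X_I$ is then a Rockland operator by the definition of generalized Rockland operator, and the same holds for $\widetilde{\mathcal{L}}^{\ast}$. If $\nu<q$, then $\nu<Q$, so Folland's theory applies: there is a unique fundamental solution $\widetilde\Gamma(z^{-1}\ast w)$, with $\widetilde\Gamma$ smooth away from $0$ and $D_\lambda$-homogeneous of degree $\nu-Q$. It satisfies $\int\widetilde\Gamma(w^{-1}\ast z)\widetilde{\mathcal L}^\ast\psi(z)\,dz=-\psi(w)$.

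The candidate kernel is obtained by saturation:
\[
\Gamma(x,y)=\int_{\mathbb{R}^{p}}\widetilde\Gamma\big((x,0)^{-1}\ast(y,\eta)\big)\,d\eta .
\]
The central estimate is convergence of this integral for $x\neq y$, together with local integrability. The plan is to use the global estimate $|\widetilde\Gamma(z)|\le c\|z\|^{\nu-Q}$ and to show that
\[
\|(x,0)^{-1}\ast(y,\eta)\|\ge c\big(d(x,y)+\|\eta\|'\big)
\]
for $\|\eta\|$ large, with $d$ comparable to the control distance of the $X_i$. Integrating in $\eta$, in homogeneous polar coordinates of dimension $q'$, then gives $|\Gamma(x,y)|\le c\,d(x,y)^{\nu-q}$ as long as $\nu-Q+q'<0$, that is, $\nu<q$. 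This is exactly where the hypothesis is used. I expect this step to be the hardest part. The difficulty is that the map $\eta\mapsto(x,0)^{-1}\ast(y,\eta)$ is polynomial but not a translation, so the comparison between $\|\cdot\|$ and $d+\|\eta\|'$ must come from the structure of the lifting, as in \cite{BBB-fundsol}. Derivatives of $\widetilde\Gamma$ are also homogeneous, of lower degree, so the same argument allows differentiation under the integral. This gives (I), and also the analogous bounds for $X$-derivatives.

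Joint homogeneity (II) follows from the substitution $\eta=\delta'_\lambda(\eta')$, since $D_\lambda$ is a group automorphism: the Jacobian $\lambda^{q'}$ combines with $\lambda^{\nu-Q}$ to give $\lambda^{\nu-q}$. Property (III) follows from the bound $c\,d(x,y)^{\nu-q}$, since $d(x,y)\to\infty$ as $|x|\to\infty$. Local integrability (a) also follows from this bound, because $d$-balls have measure comparable to $r^{q}$ and $\nu>0$.

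For (b), take $\varphi\in C_0^\infty(\mathbb{R}^n)$ and a cutoff $\chi\in C_0^\infty(\mathbb{R}^p)$, and apply the lifted identity to $\psi(x,\xi)=\varphi(x)\chi(\xi)$. Write $\widetilde{\mathcal L}^\ast\psi=\chi\,\mathcal L^\ast\varphi+(\text{terms containing }\xi\text{-derivatives of }\chi)$. Then replace $\chi$ by $\chi(\delta'_{1/R}\cdot)$ and let $R\to\infty$. The error terms vanish in the limit, by homogeneity and the decay of $\widetilde\Gamma$ together with its derivatives. After the change of variables given by right translation in $\xi$, which has unit Jacobian, this yields (b). The corresponding argument for $\widetilde{\mathcal L}^\ast$ gives a kernel $\Gamma^\ast$. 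Property (c) then follows from Fubini and the duality $\int u\,\mathcal L^\ast\psi=\int\varphi(y)\int\Gamma(x,y)\mathcal L^\ast\psi(x)\,dx\,dy$. For this step, (b) is needed with the roles of $x$ and $y$ exchanged, that is, the identity for $\Gamma^\ast$ together with (V). Smoothness of $u$ then comes from Theorem \ref{Thm main hypoellipticity}.

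Uniqueness (IV) uses the Liouville theorem. If $\gamma$ satisfies (b) and (III), fix $y$ and set $h=\Gamma(\cdot,y)-\gamma(\cdot,y)$. Then $\mathcal L^\ast$-duality gives $\mathcal{L}$-type harmonicity in the distributional sense, with the operator order checked against the statement of (b). Since $h$ is locally integrable and vanishes at infinity, it is tempered. By Theorem \ref{Thm main Liouville}, applied to the adjoint, which is also generalized Rockland, $h$ is a polynomial vanishing at infinity, hence $h=0$. For (V), $\Gamma^\ast(x,y)$ and $\Gamma(y,x)$ are compared at the lifted level using $\widetilde\Gamma^\ast(z)=\widetilde\Gamma(z^{-1})$. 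The saturation integrals then coincide after the measure-preserving change of variables $\eta\mapsto$ the $\xi$-component of $(y,\eta)^{-1}\ast(x,0)$. Alternatively, (V) follows from uniqueness.
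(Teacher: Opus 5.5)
Your overall architecture matches the paper's: lifting, Folland's $\widetilde\Gamma$, saturation over $\mathbb{R}^p$, then (II), (IV), (V) by substitution and Liouville. However, the step you use for (a) fails.

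\textbf{The integrability step (a).} The bound $|\Gamma(x,y)|\le c\,d_X(x,y)^{\nu-q}$ is true, and it is easy. Use the measure-preserving substitution $\zeta=\pi_p\big((x,0)^{-1}\star(y,\eta)\big)$ of Remark \ref{rem:CambiDiVariabile}. The lifted norm then dominates (up to constants) both $\varrho_E(\zeta)$ and $d_X(x,y)$, the latter because $d_{\widetilde X}\ge d_X$ by projecting subunit curves. So your ``hardest step'' is immediate, and (III) follows.

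But this bound is too lossy to give local integrability. Your claim that $d_X$-balls have measure comparable to $r^q$ is false. By Proposition \ref{rem:dXBalls}, $|B_X(y,r)|\approx\sum_{k=n}^{q}f_k(y)r^k$. For small $r$ this behaves like $r^n$ at points where the $X_i$ span $\mathbb{R}^n$. There, $d_X(\cdot,y)^{\nu-q}$ is integrable near $y$ only if $q-\nu<n$.

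For a concrete failure, take $X_1=\partial_{x_1}$, $X_2=x_1^k\partial_{x_2}$, $\mathcal{L}=X_1^4+X_2^4$ with $k\ge4$. Then $n=2$ and $\nu=4<q=k+2$. Near $y=(1,0)$ your majorant is $\approx|x-y|^{2-k}$, which is not in $L^1_{\mathrm{loc}}(\mathbb{R}^2)$; $\Gamma$ itself is fine there, only the majorant is lossy.

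The same objection hits the joint integrability, hence the Fubini step in (c). It also hits the dominated convergence in your proof of (b), which needs $(x,\xi)\mapsto\widetilde\Gamma((y,0)^{-1}\star(x,\xi))$ to lie in $L^1(K\times\mathbb{R}^p)$.

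The missing idea is to integrate in the lifted variables \emph{before} estimating. By Tonelli and the same substitution, split the $\zeta$-integral into two regions:
\begin{itemize}
\item On $\{\varrho_E(\zeta)\le1\}$, undoing the substitution and left-translating gives a compact subset of $\mathbb{R}^N$. There $\varrho_D^{\nu-Q}\in L^1_{\mathrm{loc}}(\mathbb{R}^N)$ because $\nu>0$.
\item On $\{\varrho_E(\zeta)>1\}$, $\varrho_E^{\nu-Q}$ is integrable because $\nu-Q+\mathcal{E}=\nu-q<0$.
\end{itemize}
This is Proposition \ref{prop:P1P2} together with Steps 1--2 of the paper's proof.

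\textbf{Two further points.} First, in (b) the error terms are $\int\widetilde\Gamma\,\mathcal{R}^*(\varphi\otimes\chi_R)$ with $\mathcal{R}=\widetilde{\mathcal{L}}-\mathcal{L}$. To kill them you need the coefficients of $\mathcal{R}^*$ to grow in $\xi$ more slowly than $\partial_\xi^\beta\chi_R$ decays, uniformly for $x$ in compacts. Homogeneity does not give this. It only bounds the $E$-weight of the $\xi$-part of the coefficient of $\partial_x^\alpha\partial_\xi^\beta$ by $\mathcal{H}_\delta(\alpha)+\mathcal{H}_E(\beta)-\nu$. Here $\mathcal{H}_\delta(\alpha)$ can exceed $q$ (for example $X_2^3=x_1^{3k}\partial_{x_2}^3$ above), so even the decay $R^{\nu-q}$ of $\int|\widetilde\Gamma|$ over $K\times\{\varrho_E\approx R\}$ does not compensate. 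The paper proves, by induction on the length of $X_I$, the sharper bound $\mathcal{H}_E(\gamma)\le\mathcal{H}_E(\beta)-1$ (Lemma \ref{lem:AggiuntoOperaInxi}). This makes $r_{\alpha,\beta}\,\partial_\xi^\beta\chi_R$ uniformly bounded, which is property (S2) of Proposition \ref{prop:S1S2vere}.

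Second, your kernel $\int\widetilde\Gamma((x,0)^{-1}\star(y,\eta))\,d\eta$ has its pole at $x$. The lifted identity with pole $(y,0)$ produces $\int\widetilde\Gamma((y,0)^{-1}\star(x,\xi))\,d\xi$. By the substitution $\Phi_{x,y}$ and the identity $\widetilde\Gamma^*(z)=\widetilde\Gamma(z^{-1})$, this equals your formula with $\widetilde\Gamma^*$ in place of $\widetilde\Gamma$. No right translation identifies the two unless $\widetilde{\mathcal{L}}$ is self-adjoint. So in the theorem's convention you have constructed $\Gamma^*$, not $\Gamma$. Define $\Gamma$ with pole $(y,0)$, as the paper does; then (c) follows from (b) alone, without (V).
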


\begin{remark}
[About uniqueness of $\Gamma$]The above theorem contains a statement about
uniqueness of $\Gamma$, expressed in terms of the properties of this function:
$\Gamma$ is the only global fundamental solution of $\mathcal{L}$ which
vanishes at infinity. We will see throughout the proof of Theorem 1.13 that
the function $\Gamma$ will be constructed by a three step procedure (lifting
to a homogeneous group - construction of the unique homogeneous global
fundamental solution $\widetilde{\Gamma}$ on that homogeneous group -
in\-te\-gration of $\widetilde{\Gamma}$ by saturation of the variables added
in the lifting procedure) which actually produces exactly one function.

Therefore, our fundamental solution is unique also in a different sense: it is
the fundamental solution which is uniquely produced by our procedure.
\end{remark}

The following last result collects a number of sharp pointwise estimates on
$\Gamma$ and its derivatives.

\begin{theorem}
[Pointwise estimates]\label{Thm estimates on Gamma}Let the assumptions of
Theorem \ref{thm:ExistenceEstimLhGeneral} be in force, and let $r$ be a
nonnegative integer such that
\[
r\geq\nu-n.
\]
Then, the following facts hold.

\begin{itemize}
\item[(1)] (Global upper estimate in the non-critical case). If%
\begin{equation}
r>\nu-n \label{vincolo su r}%
\end{equation}
there exists $c>0$ such that, for every $x,y\in\mathbb{R}^{n}$ \emph{(}with
$x\neq y$\emph{)}, one has
\begin{equation}
\left\vert Z_{1}\cdots Z_{h}\Gamma(x;y)\right\vert \leq c\frac{d_{X}^{\nu
-r}\left(  x,y\right)  }{\left\vert B_{X}\left(  x,d_{X}\left(  x,y\right)
\right)  \right\vert } \label{global upper estimate}%
\end{equation}
for any choice of $Z_{1},\ldots,Z_{h}$ (with $h\leq r$) in%
\[
\left\{  X_{1}^{x},X_{2}^{x},...,X_{m}^{x},X_{1}^{y},X_{2}^{y},...,X_{m}%
^{y}\right\}
\]
satisfying
\[
\textstyle\sum_{i=1}^{h}\left\vert Z_{i}\right\vert =r
\]
(where $\left\vert Z_{i}\right\vert =\nu_{k_{i}}$ if $Z_{i}=X_{k_{i}}$ for
some $1\leq k_{i}\leq m$).

In particular, for every fixed $x\in\mathbb{R}^{n}$ one has%
\[
\lim_{\left\vert y\right\vert \rightarrow+\infty}Z_{1}\cdots Z_{h}%
\Gamma(x;y)=0.
\]

\item[(2)] (Local upper estimate in the critical case). If
\[
r=\nu-n
\]
then for every compact set $K\subset\mathbb{R}^{n}$ there exist positive
constants $C_{0}\ $and $R_{0}=R_{0}(K)$ such that for every $x,y\in
K\mathbb{\ }$\emph{(}with $x\neq y$\emph{)}, one has (with the same meaning of
$Z_{1},...,Z_{h}$)%
\begin{equation}
\left\vert Z_{1}\cdots Z_{h}\Gamma(x;y)\right\vert \leq C_{0}\frac{d_{X}%
^{n}\left(  x,y\right)  }{\left\vert B_{X}\left(  x,d_{X}\left(  x,y\right)
\right)  \right\vert }\log\left(  \frac{R_{0}}{d_{X}\left(  x,y\right)
}\right)  . \label{upper logarithmic}%
\end{equation}

\end{itemize}

As usual, in both \eqref{global upper estimate}-\eqref{upper logarithmic} we
understand that
\[
\text{$Z_{1}\cdots Z_{h}\Gamma(x,y)=\Gamma(x,y)$ when $r=0$}.
\]

\end{theorem}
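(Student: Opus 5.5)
The plan is to follow the lifting/saturation strategy already described for Theorem \ref{thm:ExistenceEstimLhGeneral}. By the lifting procedure of Section \ref{sec lifting}, there is a homogeneous group $\mathbb{G}=(\mathbb{R}^{N},\ast,D_\lambda)$ with $N=n+p$ and homogeneous dimension $Q=q+\widetilde{q}$. On it we have left-invariant vector fields $\widetilde{X}_1,\dots,\widetilde{X}_m$, each $\nu_i$-homogeneous, with
\[
\widetilde{X}_i=X_i+R_i(x,\xi),
\]
where $R_i$ involves only $\partial_\xi$. The lifted operator $\widetilde{\mathcal{L}}=\sum c_I\widetilde{X}_I$ is a Rockland operator, so it has a homogeneous global fundamental solution $\widetilde\Gamma(z^{-1}\ast w)$ of degree $\nu-Q$. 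By construction,
\[
\Gamma(x,y)=\int_{\mathbb{R}^p}\widetilde\Gamma\bigl((x,0)^{-1}\ast(y,\eta)\bigr)\,d\eta .
\]
The key structural facts are the following. First, since $\widetilde\Gamma$ is $C^\infty$ away from the origin, any derivative $\widetilde Z_1\cdots\widetilde Z_h\widetilde\Gamma$ with total weight $r$ is again homogeneous, of degree $\nu-r-Q$. Second, $X_i^x$ and $X_i^y$ acting on $\Gamma$ can be moved under the integral, where they become the lifted fields $\widetilde X_i$, acting either via left translation or via right-invariant fields. The $\partial_\eta$ part integrates to zero, exactly as in \cite{BBB-fundsol} for the second-order case.

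\textbf{Non-critical case.} Here we get
\[
|Z_1\cdots Z_h\Gamma(x,y)|\le c\int_{\mathbb{R}^p}\|(x,0)^{-1}\ast(y,\eta)\|^{\nu-r-Q}\,d\eta .
\]
The condition $r>\nu-n$ is exactly what makes this integral converge at infinity in $\eta$, modulo the precise accounting of homogeneities. I expect the main obstacle to be this saturation estimate: showing that for $\alpha=\nu-r$
\[
\int_{\mathbb{R}^p}\|(x,0)^{-1}\ast(y,\eta)\|^{\alpha-Q}\,d\eta\le c\,\frac{d_X(x,y)^{\alpha}}{|B_X(x,d_X(x,y))|}.
\]
I would prove it by splitting $\mathbb{R}^p$ into dyadic shells according to the size of $\|(x,0)^{-1}\ast(y,\eta)\|\sim 2^k d_X(x,y)$. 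For this I would use the Nagel–Stein–Wainger/Rothschild–Stein comparison, and more precisely the one proved in \cite{BBB-fundsol}, namely
\[
\bigl|\{\eta:\|(x,0)^{-1}\ast(y,\eta)\|<R\}\bigr|\simeq \frac{|B_{\widetilde X}((x,0),R)|}{|B_X(x,R)|}\simeq\frac{R^{Q}}{|B_X(x,R)|},
\]
together with $d_X(x,y)\le\|(x,0)^{-1}\ast(y,\eta)\|$. Summing the shells gives a bound of the form
\[
\sum_{k\ge0}(2^kd)^{\alpha}/|B_X(x,2^kd)|.
\]
Since $|B_X(x,R)|$ grows at least like $R^{n}$ for large $R$ relative to $d$, this series converges geometrically when $\alpha<n$, i.e.\ $r>\nu-n$. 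I would extract that doubling-type lower growth from the polynomial structure $|B_X(x,R)|\simeq\sum_I|\lambda_I(x)|R^{d(I)}$, in which every exponent $d(I)$ is at least $n$. The decay as $|y|\to\infty$ then follows, because $d_X(x,y)\to\infty$ and the right-hand side behaves like $d^{\alpha}/|B_X(x,d)|\lesssim d^{\alpha-n}\to0$.

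\textbf{Critical case $\alpha=n$.} The same shell sum no longer decays geometrically, so the shell decomposition must be cut at a scale $R_0$ depending on $K$. For shells with $2^kd\le R_0$ and $x\in K$, we have $|B_X(x,R)|\simeq R^{n}\cdot(\text{bounded factor})$ only after restricting to compacts. This gives about $\log(R_0/d)$ terms, each of size $d^n/|B_X(x,d)|$ up to constants; here one uses that on $K$ the ratio $|B_X(x,2^kd)|/(2^kd)^n$ is nondecreasing in $k$. The contribution of $\|\cdot\|>R_0$ is handled by the homogeneity/convergence of the tail, using the finitely many higher-degree terms $d(I)>n$. It is a constant, which is absorbed into the logarithm by enlarging $R_0$, so that $\log(R_0/d)\ge1$ on $K$.
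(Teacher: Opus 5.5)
Your proposal is correct and follows essentially the paper's route. You lift to $\mathbb{G}$, pass the $X^{x}$- and $X^{y}$-derivatives under the saturation integral (Theorem \ref{thm:ReprFormulaDerivatives}) to get the bound by $\int_{\mathbb{R}^{p}}d_{\widetilde{X}}^{\nu-Q-r}((x,0)^{-1}\ast(y,\eta))\,d\eta$, and then use the saturation estimates of \cite{BBB-fundsol}; the paper merely cites these, while you reconstruct them via dyadic shells, the fiber-volume bound $\lesssim R^{Q}/|B_{X}(x,R)|$, and the reverse doubling from Proposition \ref{rem:dXBalls}.

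One small correction: the fiber integral is finite for every $r\geq0$, since only $\nu-r<q$ is needed (as in the paper's Corollary). So $r>\nu-n$ is not what makes it converge at infinity. As your shell sum actually shows, it is what makes the integral bounded uniformly by $d_{X}^{\nu-r}(x,y)/|B_{X}(x,d_{X}(x,y))|$.
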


\begin{remark}
[Assumptions on $\mathcal{L}$]We stress the fact that all our Theorems
\ref{Thm main hypoellipticity}, \ref{Thm main Liouville},
\ref{thm:ExistenceEstimLhGeneral}, \ref{Thm estimates on Gamma} will be proved
assuming that $\mathcal{L}$ is a generalized Rockland operator in the sense of
Definition \ref{Def gen Rockland 2} which, as already discussed, is more
general than Definition \ref{Def gen Rockland 1}.
\end{remark}

\begin{remark}
[Assumptions on $\nu$]The validity of assumption (\ref{vincolo su r}) is
automatic if $\nu<n$ (a condition which is stronger than $\nu<q$, that we are
assuming in Theorem \ref{thm:ExistenceEstimLhGeneral}). If instead%
\[
n\leq\nu<q,
\]
then a jointly homogeneous fundamental solution $\Gamma\left(  x;y\right)  $
still exists, by Theorem \ref{thm:ExistenceEstimLhGeneral}, but the upper
bounds (\ref{global upper estimate}) are assured only for derivatives of order
$r$ \emph{large enough} (see Example \ref{Ex large enoguh}). We stress the
fact that, differently from Theorems \ref{thm:ExistenceEstimLhGeneral} and
\ref{Thm estimates on Gamma}, our Theorems \ref{Thm main hypoellipticity} and
\ref{Thm main Liouville} do not assume the relation $\nu<q$.

Instead, under the stronger assumption $\nu<n$, point (c)\ in Theorem
\ref{thm:ExistenceEstimLhGeneral} can be improved. Actually, by
(\ref{global upper estimate}) we know that
\[
\left\vert \Gamma(x;y)\right\vert \leq c\frac{d_{X}^{\nu}\left(  x,y\right)
}{\left\vert B_{X}\left(  x,d_{X}\left(  x,y\right)  \right)  \right\vert }.
\]
This bound allows to prove that for every $\varphi\in C_{0}^{\infty
}(\mathbb{R}^{n})$, the function%
\[
u\left(  x\right)  =\int_{\mathbb{R}^{n}}\Gamma(x,y)\varphi(y)\,dy
\]
is actually continuous, and then smooth (and not only a.e.\,equal to a smooth function).
\end{remark}

\begin{remark}
[About the vanishing at infinity of $\Gamma$]The fact that $\Gamma\left(
x,\cdot\right) $ vanishes at infinity is proved both in Theorem
\ref{thm:ExistenceEstimLhGeneral}\,-\,\emph{(III)} and also, in a more
quantitative way, in Theorem \ref{Thm estimates on Gamma} (point (1) with
$r=0$). However, note that the second, stronger, conclusion (the one in
Theorem \ref{Thm estimates on Gamma}) requires the assumption $\nu\leq n$,
which is generally stronger than the assumption $v<q$ required in Theorem
\ref{thm:ExistenceEstimLhGeneral}.
\end{remark}

The strategy we will use to prove our results consists in generalizing the
lifting technique which has been firstly devised by Biagi and Bonfiglioli in
\cite{BB-lift} for homogeneous sublaplacians, implementing it in the present
more general situation to build a homogeneous fundamental solution. We think
that this technique has an independent interest and can be fruitful also in
the future.

Our main results can be extended also to heat-type operators of the kind
$\mathcal{L}\pm\partial_{t}$, provided $\mathcal{L}$ is a generalized Rockland
operator satisfying a further positivity assumption. This will be performed in
Section \ref{sec heat type operators}, and we refer to that section for the
exact statements. \bigskip

\noindent\textbf{Examples and comparison with the existing literature}
\vspace{0.1cm}

\noindent As already said, higher-order hypoelliptic differential operators
with variable coefficients have been mainly studied in the context of
homogeneous groups. Rockland in \cite{Ro} proves a sufficient condition for
the hypoellipticity of a left-invariant homogeneous operator on the Heisenberg
groups $\mathbb{H}^{n}$ and applies his result to show, as an interesting
example, the hypoellipticity of the operator%
\[
L=\sum_{i=1}^{n}\left(  X_{i}^{2m}+Y_{i}^{2m}\right)
\]
for any positive integer $m$\ (where $X_{i},Y_{i}$ are the standard generators
of $\mathbb{H}^{n}$). Helffer-Nourrigat \cite{HN} prove, as an application of
their result that we have already discussed, the hypoellipticity of the
operators (\ref{Rock}). Another class of special examples of higher-order,
homogeneous left-invariant operators on homogeneous groups, which have been
studied in some detail, is that of (positive integer) \emph{powers of
sublaplacians on Carnot groups}. For these operators hypoellipticity is
obvious, while an interesting issue is the construction and study of a global
fundamental solution. In this context, Benson, Dooley and Ratcliff in
\cite{BDR} have computed the fundamental solution of the $k$-th power of the
sublaplacian on the Heisenberg group $\mathbb{H}^{n}$. Later, Kumar and Mishra
in \cite{KM} have computed the fundamental solution of the $k$-th power of the
sublaplacian on any step-2 nilpotent Lie groups.

As for homogeneous but not left-invariant higher-order operators, we can only
mention the paper by Grushin \cite{Gr}, where it is proved the hypoellipticity
of a very special class of operators, whose main prototype is the following:%
\begin{equation}
\mathcal{L}=\Delta_{x_{1}}^{k}+\left\vert x_{1}\right\vert ^{2h}\Delta_{x_{2}%
}^{k} \label{Grushin}%
\end{equation}
with $h,k$ positive integers and $x=\left(  x_{1},x_{2}\right)  \in
\mathbb{R}^{n_{1}+n_{2}}$. These operators fit our assumptions when $h/k$ is integer.

To the best of our knowledge, the present paper is the first study of a
general class of \emph{higher-order hypoelliptic operators }which are
structured on H\"{o}rmander vector fields but \emph{not left-invariant on any
homogeneous group}.

We can show, through examples, that our results in Theorems
\ref{thm:ExistenceEstimLhGeneral} and \ref{Thm estimates on Gamma} are
consistent with the known results for left-invariant homogeneous higher-order
hypoelliptic operators on homogeneous groups \emph{or} for second order
homogeneous hypoelliptic operators (not left-invariant w.r.t.\,any Lie group
structure). On the other hand, our results cover many situations which have
not been studied so far.

\medskip

The limitation $\nu<q$ in Theorem \ref{thm:ExistenceEstimLhGeneral}, for the
\emph{existence} of a global fundamental solution jointly homogeneous in
$\left(  x,y\right)  $, is consistent with the results which are known for
left-invariant homogeneous operators on homogeneous groups. Actually, Folland
\cite{Fo2} has proved the existence of a homogeneous fundamental solution
under the assumption $\nu<q$, while Geller in \cite[Thm.\,3]{G} has proved
that, when $\nu\geq q$, a global fundamental solution still exists, but is not
homogeneous; actually, $\Gamma$ in that situation has the following structure%
\[
\Gamma\left(  x\right)  =K\left(  x\right)  +p\left(  x\right)  \log\left\Vert
x\right\Vert
\]
where $K\left(  x\right)  $ is a homogeneous distribution of degree $\nu-q$,
$p\left(  x\right)  $ is a polynomial of degree $\nu-q$ and $\left\Vert
\cdot\right\Vert $ is a homogeneous norm.

\begin{example}
About the necessity of the condition $\nu<q$ for the existence of a
homogeneous fundamental solution.

a) The Laplace operator in $\mathbb{R}^{2}$, where%
\[
\nu=q=n=2
\]
and%
\[
\Gamma\left(  x,y\right)  =c\log\left\vert x-y\right\vert
\]
is not homogeneous.

b) The $k$-th power of the Laplacian in $\mathbb{R}^{n}$, $\Delta^{k}$, where:%
\[
\nu=2k,n=q
\]
and when $2k\geq n$ (so that $\nu\geq q$) and $n$ is even, the fundamental
solution is%
\[
\Gamma\left(  x,y\right)  =\Gamma_{0}\left(  x-y\right)  \text{ with }%
\Gamma_{0}\left(  x\right)  =c_{k,n}\left\vert x\right\vert ^{2k-n}
\log\left\vert x\right\vert
\]
(not homogeneous). For this explicit formula, see e.g. \cite[p.\,48]{GGS}.
Note that these examples a) and b) are consistent with the aforementioned
Geller's result.

c) The $k$-th power of the sublaplacian on the Heisenberg group $\mathbb{H}%
^{n},$ $\Delta_{\mathbb{H}}^{k}$. For this operator Benson, Dooley and
Ratcliff in \cite{BDR}, as already recalled, have computed the explicit form
of a homogeneous fundamental solution under the restriction $k\leq n$. Here:%
\[
q=2n+2;\nu=2k,
\]
therefore%
\[
\nu<q\Leftrightarrow k\leq n\text{.}%
\]

\end{example}

\medskip

The further limitation $r>\nu-n$ appearing in Theorem
\ref{Thm estimates on Gamma}, \emph{for the validity of pointwise upper
estimates on} $\Gamma$\emph{ and its derivatives}, is consistent with the
results proved by the Authors and Bonfiglioli in \cite{BBB-fundsol} for second
order homogeneous (and not left-invariant) hypoelliptic operators. Let us
recall here an example studied in \cite[Exm.\,6.7]{BBB-fundsol}:

\begin{example}
We consider the vector fields on $\mathbb{R}^{2}$
\[
X_{1}=\partial_{x_{1}},\quad X_{2}=x_{1}\,\partial_{x_{2}},
\]
which are homogeneous of degree $1$ with respect to the dilations
\[
\delta_{\lambda}(x_{1},x_{2})=(\lambda x_{1},\lambda^{2}x_{2}),
\]
and the second order operator
\[
L=\partial_{x_{1}}^{2}+x_{1}^{2}\,\partial_{x_{2}}^{2}.
\]
Here
\[
\nu=n=2,\text{ }q=3\text{, so }\nu=n<q\text{,}%
\]
hence (by the results in \cite{BB-lift}, \cite[Exm.\,6.7]{BBB-fundsol}, or by
Theorem \ref{thm:ExistenceEstimLhGeneral}) there exists a global fundamental
solution $\Gamma\left(  x,y\right)  $, jointly homogeneous of degree
$\nu-q=-1$.

However, in this situation the global upper bound (\ref{global upper estimate}%
) only holds for $r\geq1$, that is when we are actually estimating the
\emph{derivatives} of $\Gamma$ (see point (1) in Theorem
\ref{Thm estimates on Gamma}). Instead, the function $\Gamma$ itself only
satisfies local estimates of power / logarithmic type (see
(\ref{upper logarithmic}) in point (2) in Theorem \ref{Thm estimates on Gamma}%
). For this operator the explicit form of the fundamental solution is actually
known, and satisfies sharply the bound (\ref{upper logarithmic}). We refer to
\cite[Exm.\,6.7]{BBB-fundsol} for details.
\end{example}

Let us end with a couple of examples which fit the assumptions of the present
paper, and not those of the previous theories.

\begin{example}
\label{Ex large enoguh}In $\mathbb{R}^{n}$, let us consider the system of
H\"{o}rmander vector fields
\begin{align*}
X_{1}  &  =\partial_{x_{1}}\\
X_{2}  &  =x_{1}\partial_{x_{2}}+x_{2}\partial_{x_{3}}+\ldots+x_{n-1}%
\partial_{x_{n}},
\end{align*}
which are $1$-homogeneous w.r.t.\,the dilations%
\[
\delta_{\lambda}(x)=(\lambda x_{1},\lambda^{2}x_{2},\cdots,\lambda^{n}x_{n}),
\]
and let us consider the fourth order operator%
\[
L=X_{1}^{4}+X_{2}^{4}.
\]
Here%
\[
q=\frac{n\left(  n+1\right)  }{2},\nu=4
\]
so for every dimension $n\geq5$ the conditions
\[
\nu<n<q
\]
hold. To fix ideas, pick $n=5$, so $q=15$. Then there exists a global
fundamental solutions jointly homogeneous of degree
\[
\nu-q=-11.
\]
Moreover, since $\nu-n=-1<0$, the global upper bound
(\ref{global upper estimate}) in the non-critical case holds for every
$r\geq0.$
\end{example}

\begin{example}
We consider, for a fixed positive integer $k$, the system of H\"{o}r\-man\-der
vector fields in $\mathbb{R}^{2}$
\[
X_{1}=\partial_{x_{1}},\quad X_{2}=x_{1}^{k}\,\partial_{x_{2}},
\]
which are homogeneous of degree $1$ with respect to the dilations
\[
\delta_{\lambda}(x_{1},x_{2})=(\lambda x_{1},\lambda^{k+1}x_{2}),
\]
and the fourth order operator
\[
L=X_{1}^{4}+X_{2}^{4}=\partial_{x_{1}}^{4}+x_{1}^{4k}\,\partial_{x_{2}}^{4}.
\]
Here
\[
n=2;\nu=4;q=k+2,
\]
so
\[
\nu<q\Leftrightarrow k>2\text{.}%
\]

For $k=1,2,$ we cannot expect the existence of a jointly homogeneous global
fundamental solution, while, for every integer $k\geq3$, by Theorem
\ref{thm:ExistenceEstimLhGeneral} there exists a global fundamental solution
$\Gamma\left(  x,y\right) $, jointly homogeneous of degree $\nu-q=2-k$.

Under this assumption, the global upper bound (\ref{global upper estimate}) in
the non-critical case holds for
\[
r>\nu-n=2,
\]
that is when we are actually estimating the \emph{derivatives} of $\Gamma$ of
order at least $3$ (see point (1) in Theorem \ref{Thm estimates on Gamma}).

The critical case, with local estimates of power / logarithmic type (see
(\ref{upper logarithmic}) in point (2) in Theorem \ref{Thm estimates on Gamma}%
) corresponds to $r=2$, that is upper bounds on second order derivatives.

The function $\Gamma$ itself, and its first order derivatives, despite their
being jointly homogeneous, do not satisfy an easy upper bound in terms of
$d_{X}\left(  x,y\right)  $.
\end{example}

\bigskip

\noindent\textbf{Plan of the paper.} In Section \ref{sec preliminaries} we
collect some known facts on a couple of topics: first, in Section
\ref{sec homogeneous vector fields}, we recall some facts about the control
distance and the volume of balls in the context of homogeneous (but not left
invariant) H\"{o}rmander vector fields; then, in Section
\ref{sec left-invariant}, we deal with homogeneous groups, homogeneous and
left-invariant hypoelliptic operators in this context, and their fundamental
solution. In Section \ref{sec:PropGenRockland} we prove our results. First, in
Section \ref{sec lifting}, we present the lifting technique devised in
\cite{BB-lift} adapted to the present context. Applying this technique, in
Sections \ref{sec hypoellipticity} and \ref{subsec Liouville} we prove
Theorems \ref{Thm main hypoellipticity} and \ref{Thm main Liouville},
respectively. In Section \ref{sec existence fund sol} we prove Theorems
\ref{thm:ExistenceEstimLhGeneral} and \ref{Thm estimates on Gamma}, that is
our main results. Finally, in Section \ref{sec heat type operators} we show
how our main results can be extended to heat-type operators of the kind
$\mathcal{L}\pm\partial_{t}$. \bigskip

\noindent-\,\,\textbf{Conflict of interest.} The author state no conflict of interest.

\noindent-\,\,\textbf{Data availability statement.} There are no data
associated with this research.

\noindent

\section{Preliminaries and known results\label{sec preliminaries}}

\subsection{Homogeneous H\"{o}rmander vector fields and their weighted control
distance}

\label{sec homogeneous vector fields}

From now on we consider a family
\[
X=\left\{  X_{1},...,X_{m}\right\}
\]
of real smooth vector fields in $\mathbb{R}^{n}$, satisfying Assumption
\textbf{(H1)} in Section \ref{sec:Intro}, and we keep the notation introduced there.

We have already noted in section \ref{sec:Intro} that our Assumption
\textbf{(H1)} imply that H\"{o}rmander's condition actually holds in the whole
$\mathbb{R}^{n}$. In the following Remark we collect some other easy
consequences of \textbf{(H1)}.

\begin{remark}
\label{rem:ConseguenzeH1H2}The following assertions hold.

\begin{enumerate}
\item Since the vector fields $X_{1},\ldots,X_{m}$ are smooth and
$\delta_{\lambda}$-homogeneous of \emph{po\-si\-tive degree }$\nu_{1}%
,\ldots,\nu_{m}$, it is easy to see that
\begin{equation}
X_{j}=\sum_{i=1}^{n}p_{i,j}(x)\partial_{x_{i}}\qquad(1\leq j\leq m),
\label{eq:expressionXj}%
\end{equation}
where $p_{i,j}$ are \emph{polynomial functions}, $\delta_{\lambda}%
$-homogeneous of degree
\[
\sigma_{i}-\nu_{j}\leq\sigma_{i}-1.
\]
In particular, since $\sigma_{1},\ldots,\sigma_{n}$ are increasingly ordered,
we deduce that
\begin{equation}
\text{\emph{$p_{i,j}$ is independent of $x_{i},x_{i+1},\ldots,x_{n}$}.}
\label{eq:piramydshape}%
\end{equation}
This fact also implies that%
\begin{equation}
X_{i}^{\ast}=-X_{i}\quad\text{for every $i=1,\ldots,m.$} \label{eq:XistarXi}%
\end{equation}

\item An analogous homogeneity reasoning shows that no vector field can be
homogeneous of a degree larger than $\sigma_{n}$ (without being identically
zero). Since, on the other hand, the commutator of two $\delta_{\lambda}%
$-homo\-geneous vector fields of degrees $d_{1},d_{2}$ is itself
$\delta_{\lambda}$-homogeneous of degree $d_{1}+d_{2}$, we see that the Lie
algebra $\operatorname*{Lie}(X)$ generated by $X$ is \emph{nilpotent of step
$r\leq\sigma_{n}$}.

\item Since $\mathrm{Lie}(X)$ is finitely generated and nilpotent, we deduce
that
\[
N=\mathrm{dim}(\mathrm{Lie}(X))<+\infty.
\]
On the other hand, by Assumption \textbf{(H1)} we also have $N\geq n$.
\end{enumerate}
\end{remark}

Next, we specialize to our context the notion of \emph{weighted control
distance} induced by a family of vector fields, first studied in some
generality by Nagel-Stein-Wainger in \cite{NSW}.

\begin{definition}
\label{Def CC distance}Given our system $X=\left\{  X_{1},...,X_{m}\right\}  $
of vector fields, for every fixed $\delta>0$ and every $x,y\in\mathbb{R}^{n}$,
we denote by $C_{x,y}(\delta)$ the set of the \emph{absolutely continuous
curves}
\[
\gamma:[0,1]\rightarrow\mathbb{R}^{n}%
\]
satisfying the following properties:

\begin{enumerate}
\item[i)] $\gamma(0)=x$ and $\gamma(1)=y$;

\item[ii)] for a.e.\thinspace$t\in\lbrack0,1]$ one has
\[
\gamma^{\prime}(t)=\sum_{i=1}^{m}a_{i}(t)X_{i}(\gamma(t)),\text{ with
}\left\vert a_{i}(t)\right\vert \leq\delta^{\nu_{i}}%
\]
$\quad$for every $1\leq i\leq$$m.$
\end{enumerate}

Then, we define the \emph{weighted $X$-control distance} as follows:
\begin{equation}
d_{X}(x,y)=\inf\left\{  \delta>0:\,\exists\,\,\gamma\in C_{x,y}(\delta
)\right\}  , \label{CC distance}%
\end{equation}
where the nonemptiness of $C_{x,y}(\delta)$ follows from the connectivity
property of a system of H\"{o}rmander vector fields (Chow's theorem, for a
proof see e.g. \cite[Sec.\,1.6]{BBbook}).
\end{definition}

It can be proved that $d_{X}$ is actually a distance in $\mathbb{R}^{n}$. (For
the standard cases (a): $\nu_{i}=1$ for every $i$, and (b): $\nu_{1}=2$ and
$\nu_{i}=1$ for $i=2,3,...,m$ a proof can be found in \cite[Sec.\,1.4,
1.5]{BBbook}. The general case is similar). Moreover, this $d_{X}$ is
topologically, but not metrically, equivalent to the Euclidean distance.

For every fixed $a\in\mathbb{R}^{n}$ and every $r>0$, we denote by
$B_{X}(a,r)$ the $d_{X}$-ball with centre $a$ and radius $r$:
\[
B_{X}(a,r)=\{x\in\mathbb{R}^{n}:\,d_{X}(x,a)<r\}.
\]
The next proposition collects some important and nontrivial properties of
these metric balls, which improve in our situation a deep result proved for a
general system of H\"ormander vector fields, but in local form, by
Nagel-Stein-Wainger \cite{NSW}.

\begin{proposition}
[Geometry of $d_{X}$-balls]\label{rem:dXBalls}(See \cite[Thm.\,B]%
{BBB-fundsol}). Let $X$ and $d_{X}$ be as above. Then there exist constants
$\gamma_{1},\gamma_{2}>0$ such that
\begin{equation}
\gamma_{1}\sum_{k=n}^{q}f_{k}(x)r^{k}\leq\left\vert B_{X}\left(  x,r\right)
\right\vert \leq\gamma_{2}\sum_{k=n}^{q}f_{k}(x)r^{k}.
\label{eq:measureBallsNSW}%
\end{equation}
for every $x\in\mathbb{R}^{n}$ and $r>0$. \vspace{0.1cm}

Here, $q$ is as in \eqref{q}. Furthermore, for every $k=n,\ldots,q$,
$f_{k}:\mathbb{R}^{n}\rightarrow\mathbb{R}$ is a function which is continuous,
non-ne\-ga\-tive and $\delta_{\lambda}$-ho\-mogeneous of degree $q-k$. In
particular, $f_{q}(\cdot) $ is a positive \emph{constant}.
\end{proposition}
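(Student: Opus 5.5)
The estimate \eqref{eq:measureBallsNSW} is cited from \cite[Thm.\,B]{BBB-fundsol}; I would prove it by combining the local Nagel--Stein--Wainger volume formula with the $\delta_\lambda$-homogeneity of the vector fields.

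\emph{Step 1: scaling of the distance.} Since each $X_i$ is $\delta_\lambda$-homogeneous of degree $\nu_i$, a curve $\gamma\in C_{x,y}(\delta)$ is mapped by $\delta_\lambda$ to a curve in $C_{\delta_\lambda x,\delta_\lambda y}(\lambda\delta)$. Indeed, \eqref{homogeneous op} gives $d\delta_\lambda\,X_i(z)=\lambda^{-\nu_i}X_i(\delta_\lambda z)$, so the coefficients become $a_i\lambda^{\nu_i}$. Hence
\[
d_X(\delta_\lambda x,\delta_\lambda y)=\lambda\, d_X(x,y),\qquad B_X(\delta_\lambda x,\lambda r)=\delta_\lambda B_X(x,r),
\]
and therefore $|B_X(\delta_\lambda x,\lambda r)|=\lambda^{q}|B_X(x,r)|$.

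\emph{Step 2: the local NSW formula.} For each $n$-tuple $I=(Y_1,\dots,Y_n)$ of iterated commutators of the $X_i$, let $\lambda_I(x)=|\det(Y_1(x),\dots,Y_n(x))|$ and $|I|=\sum_j \deg Y_j$. By Remark \ref{rem:ConseguenzeH1H2}, $\mathrm{Lie}(X)$ is nilpotent and finite dimensional, so there are finitely many such $I$ modulo linear combinations. Each commutator $Y_j$ is $\delta_\lambda$-homogeneous, and the determinant is then homogeneous: $\lambda_I$ has degree $q-|I|$, because the factor $\partial_{x_i}$ carries degree $-\sigma_i$. Define
\[
f_k(x)=\sum_{|I|=k}\lambda_I(x).
\]
These $f_k$ are continuous, nonnegative, and homogeneous of degree $q-k$. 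Only $k$ with $n\le k\le q$ contribute: $k\ge n$ since all degrees are at least $1$, and $k\le q$ since homogeneous polynomials of negative degree vanish. A nonzero $f_q$ is a constant, and it is positive because Hörmander's condition holds at $0$, where only the $I$ with $|I|=q$ survive. Nagel--Stein--Wainger give the two-sided bound $|B_X(x,r)|\simeq\sum_I\lambda_I(x)r^{|I|}$ for $x$ in a compact set $K$ and $r\le r_0(K)$, with constants depending on $K$. I would apply this with $K=\{\|x\|\le 1\}$, where $\|\cdot\|$ is a homogeneous norm, and obtain \eqref{eq:measureBallsNSW} there with uniform constants.

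\emph{Step 3: globalization.} Given $x\neq 0$ and $r>0$, I set $\lambda=\max(\|x\|, r/r_0)$ and write $x=\delta_\lambda x'$, $r=\lambda r'$, so that $\|x'\|\le1$ and $r'\le r_0$. By Step 1, $|B_X(x,r)|=\lambda^q|B_X(x',r')|$. By homogeneity of the $f_k$,
\[
\lambda^q\sum_k f_k(x')r'^k=\sum_k f_k(x)r^k.
\]
This handles every point, including $x=0$ with any $r$.

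The main obstacle is Step 2. It requires knowing that the NSW constants are uniform on the compact set $\|x\|\le1$ for a fixed radius $r_0$, including near points where some $\lambda_I$ degenerate. This uniformity is part of the NSW theorem when stated on a compact set with a Hörmander system, but it still needs to be checked carefully against their exact formulation, including the weights $\nu_i$, so that the local comparison really holds with fixed constants for all $r\le r_0$.
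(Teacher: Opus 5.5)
Your argument is correct and is essentially the standard route behind the result the paper only cites from \cite[Thm.\,B]{BBB-fundsol}; the paper gives no proof of its own. That route is the local Nagel--Stein--Wainger estimate on the compact set $\{\|x\|\le 1\}$, with $f_k=\sum_{|I|=k}\lambda_I$, globalized via $d_X(\delta_\lambda x,\delta_\lambda y)=\lambda\,d_X(x,y)$ and $|\delta_\lambda E|=\lambda^q|E|$. Two small remarks: (\ref{homogeneous op}) gives $(d\delta_\lambda)X_i(z)=\lambda^{\nu_i}X_i(\delta_\lambda z)$, not $\lambda^{-\nu_i}$, although your next conclusion that the coefficients become $a_i\lambda^{\nu_i}$ is the right one; and the ``obstacle'' you flag is already contained in NSW's theorem, which is stated with $\delta_0$ and constants uniform on compact sets and allows arbitrary formal degrees on the generators.
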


The previous Proposition in particular implies the validity of a \emph{global
doubling condition}: there exists a constant $c>0$ such that%
\begin{equation}
\left\vert B_{X}\left(  x,2r\right)  \right\vert \leq c\left\vert B_{X}\left(
x,r\right)  \right\vert \label{doubling}%
\end{equation}
for every $x\in\mathbb{R}^{n}$ and $r>0$. In turn, property (\ref{doubling}),
by a standard computation which holds in doubling spaces, in particular
implies the following:

\begin{proposition}
\label{Prop fractional integral}For every $\alpha>0$ there exists a constant
$c>0$ such that for every $x\in\mathbb{R}^{n}$ and $r>0$ one has:%
\begin{equation}
\int_{d_{X}\left(  x,y\right)  <r}\frac{d_{X}\left(  x,y\right)  ^{\alpha}%
}{\left\vert B_{X}\left(  x,d_{X}\left(  x,y\right)  \right)  \right\vert
}dy\leq cr^{\alpha}\text{.} \label{integrale frazionario}%
\end{equation}

\end{proposition}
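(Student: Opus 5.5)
The plan is a dyadic decomposition of the ball $\{d_X(x,y)<r\}$ into annuli, controlling each annulus by the volume of a ball via the doubling condition (\ref{doubling}). Fix $x$ and $r>0$, and for $j=0,1,2,\dots$ set
\[
A_j=\{y:\ 2^{-j-1}r\le d_X(x,y)<2^{-j}r\}.
\]
The set $\{d_X(x,y)<r\}$ is the union of the $A_j$ together with the point $y=x$, which has measure zero. Since $d_X$ is topologically equivalent to the Euclidean distance, the integrand is measurable and the decomposition makes sense.

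On $A_j$ I bound the numerator and the denominator separately. For the numerator, $d_X(x,y)^\alpha\le (2^{-j}r)^\alpha$, because $\alpha>0$. For the denominator, monotonicity of balls in the radius gives
\[
|B_X(x,d_X(x,y))|\ge |B_X(x,2^{-j-1}r)|.
\]
Positivity of this quantity is guaranteed by Proposition \ref{rem:dXBalls}, since $f_q$ is a positive constant. Combining these, the integral over $A_j$ is at most
\[
(2^{-j}r)^\alpha\,\frac{|A_j|}{|B_X(x,2^{-j-1}r)|}\le (2^{-j}r)^\alpha\,\frac{|B_X(x,2^{-j}r)|}{|B_X(x,2^{-j-1}r)|}\le c\,(2^{-j}r)^\alpha .
\]
The last step uses (\ref{doubling}) with radius $2^{-j-1}r$, and the constant $c$ is independent of $x$, $r$ and $j$.

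Summing over $j$ gives the bound $c\,r^\alpha\sum_j 2^{-j\alpha}=c_\alpha r^\alpha$, which converges precisely because $\alpha>0$. The sum of the integrals equals the full integral by monotone convergence, since the integrand is nonnegative.

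There is no real obstacle here. The only points needing care are that balls have positive, finite measure, which comes from (\ref{eq:measureBallsNSW}), and that the doubling constant is global, which is exactly what Proposition \ref{rem:dXBalls} provides.
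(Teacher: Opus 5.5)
Your argument is correct and is exactly the standard dyadic-annuli computation in doubling spaces that the paper invokes. The paper gives no further detail: it derives the global doubling property (\ref{doubling}) from Proposition \ref{rem:dXBalls} and states that the estimate follows by this standard computation, and your remarks on measurability and on the positivity and finiteness of $|B_X(x,\rho)|$ supply the routine points it leaves implicit.
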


\subsection{Homogeneous groups and left-invariant
operators\label{sec left-invariant}}

We start by recalling the following

\begin{definition}
[Homogeneous group]\label{Def homogeneous group}Assume we have, in
$\mathbb{R}^{n}$, a Lie group structure $\left(  \mathbb{R}^{n},\star\right)
$ and a family of (diagonal, nonisotropic) dilations $\left\{  D_{\lambda
}\right\}  _{\lambda>0}$,
\begin{equation}
D_{\lambda}\left(  x\right)  =\left(  \lambda^{\sigma_{1}}x_{1},\lambda
^{\sigma_{2}}x_{2},...,\lambda^{\sigma_{n}}x_{n}\right)  , \label{dilations}%
\end{equation}
for some exponents $\sigma_{i}$ with%
\[
0<\sigma_{1}\leq\sigma_{2}\leq...\leq\sigma_{n},
\]
which are group automorphisms; we then say that
\[
\mathbb{G}=\left(  \mathbb{R}^{n},\star,\left\{  D_{\lambda}\right\}
_{\lambda>0}\right)
\]
is a \emph{homogeneous group}, of \emph{homogeneous dimension}%
\[
Q=\sum_{i=1}^{n}\sigma_{i}.
\]
Up to normalization of the exponents, it is not restrictive to assume
$\sigma_{1}=1$; moreover, under the assumptions that we will make later,
$\sigma_{i}$ will be positive \emph{integers}.
\end{definition}

For every $x\in\mathbb{G}$, let$\ \tau_{x}$ denote the left-translation by
$x$, on $\mathbb{G}$, that is%
\begin{align*}
\tau_{x}  &  :\mathbb{R}^{n}\rightarrow\mathbb{R}^{n},\\
\tau_{x}(y)  &  :=x\star y.
\end{align*}
Then, we say that a smooth differential operator $Z$ on $\mathbb{G}$ is
\emph{left-invariant }if%
\begin{equation}
Z\left(  u\circ\tau_{x}\right)  =(Zu)\circ\tau_{x}, \label{eq.defXleftinv}%
\end{equation}
for every $u\in C^{\infty}(\mathbb{R}^{n})$ and $x\in\mathbb{R}^{n}$. The
notion of homogeneity of some degree $\nu$ (w.r.t.\,the dilations $D_{\lambda
}$) for a differential operator on $\mathbb{G}$ is analogous to
(\ref{homogeneous op}).

Here we collect a few notions and known properties about homogeneous groups.
For more details and proofs, see for instance \cite[Chap.\,3]{BBbook}.

On any homogeneous group,\ it is not restrictive to assume that the neutral
element of $\mathbb{G}$ is $0$. The $n$-dimensional Lebesgue measure is a
\emph{bi-invariant Haar measure} on $\mathbb{G}$ (see \cite[Thm.\,3.8]{BBbook}).

\begin{definition}
\label{Def homogeneous norm}A \emph{homogeneous norm} on $\mathbb{G}$ is a
continuous function
\[
\left\Vert \cdot\right\Vert :\mathbb{G\rightarrow\lbrack}0,+\infty)
\]
such that:

\begin{enumerate}
\item $\left\Vert x\right\Vert =0\,\Leftrightarrow\,x=0$;

\item $\left\Vert D_{\lambda}\left(  x\right)  \right\Vert =\lambda\left\Vert
x\right\Vert $ for every $x\in\mathbb{R}^{n},\,\lambda>0$.
\end{enumerate}
\end{definition}

It can be proved that every homogeneous norm also satisfies%
\begin{equation}%
\begin{tabular}
[c]{l}%
$\left\Vert x^{-1}\right\Vert \leq c\left\Vert x\right\Vert $\\
$\left\Vert x\star y\right\Vert \leq c\left(  \left\Vert x\right\Vert
+\left\Vert y\right\Vert \right)  $%
\end{tabular}
\label{norm properties}%
\end{equation}
for some absolute constant $c>0$ and every $x,y\in\mathbb{G}$.

On every homogeneous group there exist infinitely many different homogeneous
norms, all of them being equivalent. For instance, an explicit example of
homogeneous norm is the following:%
\begin{equation}
\left\Vert x\right\Vert =\sum_{i=1}^{n}\left\vert x_{i}\right\vert
^{1/\sigma_{i}}. \label{canonical homogeneous norm}%
\end{equation}

If $\left\Vert \cdot\right\Vert $ is a homogeneous norm such that $\left\Vert
x\right\Vert =\left\Vert x^{-1}\right\Vert $ for every $x\in\mathbb{R}^{n}$
(where $x^{-1}$ denotes the inverse of $x$ with respect to $\star$), we say
that $\left\Vert \cdot\right\Vert $ is \emph{symmetric}. Any symmetric
homogeneous norm induces a \emph{quasidistance}%
\begin{equation}
d\left(  x,y\right)  =\left\Vert x^{-1}\star y\right\Vert ,
\label{quasidistance}%
\end{equation}
satisfying

\begin{enumerate}
\item $d(x,y)\geq0$ for every $x,y\in\mathbb{R}^{n}$, and
$d(x,y)=0\,\Leftrightarrow\,x=y$;

\item $d(x,y)=d(y,x)$ for every $x,y\in\mathbb{R}^{n}$;

\item there exists a constant $c\geq1$ such that
\[
d(x,z)\leq c\left(  d(x,y)+d(y,z)\right)  \ \text{for every $x,y,z\in
\mathbb{R}^{n}.$}%
\]

\end{enumerate}

\begin{remark}
[Homogeneous norms with or without translations]The notion of homogeneous
norm, actually, can be introduced in $\mathbb{R}^{n}$ whenever a family of
dilations $\left\{  \delta_{\lambda}\right\}  $ is defined, as in
(\ref{delta lambda}), even when a group structure is lacking; the norm
(\ref{canonical homogeneous norm}) is still a good example also in this more
general context. In that situation, clearly, the properties
(\ref{norm properties}) and the quasidistance (\ref{quasidistance}) are no
longer meaningful. We will use homogeneous norms in this more general context
in Section \ref{sec existence fund sol}.
\end{remark}

We denote by $\mathrm{Lie}(\mathbb{G})$ the set of the left\--in\-va\-riant
vector fields on $\mathbb{G}$ and we call it the \emph{Lie algebra of
$\mathbb{G}$}. This Lie algebra has finite dimension (as a real vector space),
and
\[
\dim(\mathrm{Lie}(\mathbb{G}))=n.
\]

Assume that $X_{1},...,X_{m}$ is a set of \emph{generators} of $\mathrm{Lie}%
(\mathbb{G})$, that is a set of left-invariant vector fields satisfying
H\"{o}rmander's condition. Moreover, assume that each $X_{i}$ is $D_{\lambda}%
$-homogeneous of positive integer degree $\nu_{i}$ with $1\leq\nu_{1}\leq
\nu_{2}\leq...\leq\nu_{m}$. In other words, we now assume that $X_{1}%
,...,X_{m}$ is a system of vector fields satisfying our Assumption
\textbf{(H1)}, and moreover they are left-invariant.

In the special case $\nu_{i}=1$ for $i=1,2,...,m$ we say that $\mathbb{G}$ is
a \emph{stratified group}, or a \emph{Carnot group}.

In the other special case $\nu_{i}=1$ (for $i=1,...,m-1$) and $\nu_{m}=2$, we
say that $\mathbb{G}$ is \emph{stratified of type II.}

In this paper we are mainly interested in the general situation when these
cases \emph{do not} occur.

\begin{notation}
Note that in the following we will always use $D_{\lambda}$ to denote
dilations in a homogeneous group, and $\delta_{\lambda}$ to denote a family of
dilations in $\mathbb{R}^{n}$ when we do not assume the existence of a group
structure. Also, we will use $Q$ to denote the homogeneous dimension of a
homogeneous group, and $q$ to denote the homogeneous dimension related to a
family of dilations in $\mathbb{R}^{n}$ when a group structure is lacking. The
reason of this notation will be clear in Section \ref{sec lifting}.
\end{notation}

In this context, the weighted control distance $d_{X}$ (see Definition
\ref{Def CC distance}) is metrically equivalent to the quasidistance $d$
induced by any homogeneous norm in $\mathbb{G}$ (see (\ref{quasidistance})).
Actually, in this case%
\[
\left\Vert x\right\Vert =d_{X}\left(  x,0\right)
\]
is a homogeneous norm. This fact and other properties of $d_{X}$ on
homogeneous groups are proved for instance in \cite[Sec.\,3.5]{BBbook} (the
proofs are given in the case $\nu_{i}=1$ for every $i$, but the same arguments
work in our situation).

\begin{remark}
\label{rem:HomNormLloc}We explicitly mention, for a future reference, the
following (local) \emph{integrability property} of a general homogene\-ous
norm $\left\Vert \cdot\right\Vert $ on a given homogeneous group
$\mathbb{G}=(\mathbb{R}^{n},\star,D_{\lambda})$ (see, e.g., \cite[Cor.\,5.4.5]%
{BLUBook}). \vspace{0.1cm}

\emph{Given any $\alpha\in\mathbb{R}$, we have
\[
\left\Vert \cdot\right\Vert ^{\alpha}\in L_{\mathrm{loc}}^{1}(\mathbb{R}%
^{n})\,\,\Longleftrightarrow\,\,\alpha>-Q
\]
where $Q>0$ is the homogeneous dimension of $\mathbb{G}$.}
\end{remark}

\bigskip

Starting with the generators $X_{1},...,X_{m}$ of $\mathrm{Lie}(\mathbb{G})$,
where we are assuming each $X_{i}$ $D_{\lambda}$-homogeneous of degree
$\nu_{i}$, let us consider a differential operator of the kind%
\begin{equation}
L=\sum_{\left\vert I\right\vert =\nu}c_{I}X_{I} \label{general L}%
\end{equation}
where $c_{I}$ are real constants and $\nu$ is a fixed positive integer. By
construction, $L$ is left-invariant and $\nu$-homogeneous. If $L$ is also
hypoelliptic, then $L$ is a Rockland operator (see Definition
\ref{Def Rockland}). In the following it will be important to discuss also the
hypoellipticity of the transpose $L^{\ast}$. Note that%
\[
L^{\ast}=\sum_{\left\vert I\right\vert =\nu}c_{I}\left(  X_{I}\right)  ^{\ast}%
\]
and, if
\[
X_{I}=X_{i_{1}}X_{i_{2}}...X_{i_{k}}%
\]
then%
\[
\left(  X_{I}\right)  ^{\ast}=X_{i_{k}}^{\ast}...X_{i_{2}}^{\ast}X_{i_{1}%
}^{\ast}=\left(  -1\right)  ^{k}X_{i_{k}}...X_{i_{2}}X_{i_{1}}.
\]

For instance, as already noted in Remark \ref{remark trasposti notevoli}, for
Rockland operators of type (\ref{power of Rockland}) one simply has $L^{\ast
}=L$, while for (a powers of) a H\"{o}rmander operators%
\[
L=\left(  \sum_{j=2}^{m}X_{j}^{2}+X_{1}\right)  ^{k}%
\]
one has%
\[
L^{\ast}=\left(  \sum_{j=2}^{m}X_{j}^{2}-X_{1}\right)  ^{k},
\]
which is still a (power of) a H\"{o}rmander operator, in particular it is
still hypoelliptic.

\bigskip

A property which will be useful is the following (see \cite[Thm.\,3.2.45]{FR}).

\begin{theorem}
[Liouville-type theorem on homogeneous groups]\label{thm:LiouvilleGroups} Let
$L$ be a homogeneous (of any degree) left-invariant differential operator on a
homogeneous group $\mathbb{G}$. We assume that $L$ and $L^{\ast}$ are
hypoelliptic on $\mathbb{G}$. If the tempered distribution $f\in
\mathcal{S}^{\prime}(\mathbb{G})$ satisfies $Lf=0$ then $f$ is a polynomial.
\end{theorem}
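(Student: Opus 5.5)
This is the Liouville theorem on homogeneous groups, quoted from \cite[Thm.\,3.2.45]{FR}. To prove it I would use the Fourier transform in Euclidean coordinates together with the homogeneity of $L$, and reduce to the classical fact that a tempered distribution whose Fourier transform is supported at the origin is a polynomial. The key claim is therefore: if $f\in\mathcal{S}'(\mathbb{G})$ and $Lf=0$, then $\widehat{f}$ is supported in $\{0\}$. Equivalently, I will show that $\langle f,\varphi\rangle=0$ for every $\varphi\in\mathcal{S}$ whose Euclidean Fourier transform vanishes near the origin.

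\textbf{Step 1 (reduction via dilations and hypoellipticity).} Since $L$ is hypoelliptic and $Lf=0$, $f$ is a smooth function. The same holds for any convolution $f\star\psi$ with $\psi\in\mathcal{S}$: by left invariance, $L(f\star\psi)=(Lf)\star\psi=0$, so $f\star\psi$ is again a smooth tempered solution. It therefore suffices to show that any smooth tempered solution with polynomial growth has Fourier transform supported at the origin.

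\textbf{Step 2 (the core).} Here I use the Rockland/Helffer--Nourrigat theory for $L^\ast$. Since $L^\ast$ is homogeneous, left-invariant and hypoelliptic, it admits a homogeneous fundamental solution modulo polynomials. For $\nu<Q$ Folland's construction gives a kernel $K$, homogeneous of degree $\nu-Q$ and smooth off $0$. More generally, I would use Geller's kernel $K+p\log\|\cdot\|$, with $K$ homogeneous. Take $\varphi\in\mathcal{S}$ with $\widehat\varphi$ vanishing near $0$; then $\varphi$ has vanishing moments of all orders. For such $\varphi$, the function $\theta=\varphi\star K$ (suitably ordered) lies in $\mathcal{S}$. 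Indeed, the vanishing moments kill the Taylor terms in the decay at infinity, and the logarithmic and polynomial part is annihilated for the same reason. This $\theta$ satisfies $L^\ast\theta=\varphi$. Then
\[
\langle f,\varphi\rangle=\langle f,L^\ast\theta\rangle=\langle Lf,\theta\rangle=0.
\]
Hence $\widehat f$ vanishes away from the origin, so $\widehat f$ is a finite combination of derivatives of $\delta$, and $f$ is a polynomial.

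\textbf{Main obstacle.} The hard part is Step 2: proving that $\varphi\star K\in\mathcal{S}$ when $\varphi$ has all vanishing moments. This needs a group Taylor expansion of $K(y^{-1}\star x)$ in $y$ for large $\|x\|$, using the homogeneous derivative estimates $|X_IK(x)|\lesssim\|x\|^{\nu-Q-|I|}$. It also needs the fact that the Euclidean moment conditions coincide with the vanishing of moments against the polynomial algebra on $\mathbb{G}$. This holds because the group law is polynomial and the polynomials of each homogeneous degree are preserved under translation.

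The hypothesis on $L$ enters only in the duality step $\langle f,L^\ast\theta\rangle=\langle Lf,\theta\rangle$ and in the smoothness used in Step 1. The hypothesis on $L^\ast$ is what guarantees existence and regularity of $K$.
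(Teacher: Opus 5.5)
The argument is sound for $\nu<Q$, but the case $\nu\ge Q$, which the statement explicitly includes, is circular as written. (The paper itself gives no proof of this result; it quotes it from \cite[Thm.\,3.2.45]{FR}.)

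\textbf{The case $\nu<Q$ works.} Folland's theorem gives a kernel $K$ with $L^\ast K=\delta_0$, homogeneous of degree $\nu-Q$ and smooth off $0$. It needs both $L^\ast$ and its transpose $L$ to be hypoelliptic, so this is where the hypothesis on $L$ enters; the duality step uses no hypoellipticity at all. The Taylor polynomial of $y\mapsto K(y^{-1}\star x)$ at $y=0$ is a polynomial in $y$, so the vanishing moments of $\varphi$ kill it. The remainder, the estimates $|X_IK|\lesssim\|\cdot\|^{\nu-Q-|I|}$ and the decay of $\varphi$ then give $X_I(\varphi\star K)=O(\|x\|^{-M})$ for all $I$ and $M$. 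Hence $\theta=\varphi\star K\in\mathcal S$ with $L^\ast\theta=\varphi$, and duality plus the Fourier-support argument finish.

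\textbf{The gap: $\nu\ge Q$.} The paper genuinely needs this case: the proof of Theorem \ref{Thm main Liouville} applies the result to $\widetilde{\mathcal L}$ with no relation between $\nu$ and $Q$ assumed. Folland's theorem gives nothing here. You also cannot simply quote Geller's kernel $K+p\log\|\cdot\|$, because its derivation in \cite{G} goes through this very Liouville theorem. For a tempered fundamental solution $E$, the difference $\lambda^{Q-\nu}E\circ D_\lambda-E$ is a tempered solution of the homogeneous equation, hence a polynomial by Liouville. Solving the resulting cocycle in $\lambda$ is what produces the term $p\log\|\cdot\|$. So for $\nu\ge Q$ your Step 2 presupposes the statement being proved. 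If you had an independently constructed kernel of that form, your Schwartz-class argument would go through, since derivatives of order $>\nu-Q$ of $p\log\|\cdot\|$ are homogeneous.

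\textbf{A fix valid for every $\nu$, with no fundamental solution.} Let $B_r=\{\|z\|<r\}$.
The space $\{u\in L^\infty(B_2):Lu=0\}$ is closed in $L^\infty(B_2)$ and, by hypoellipticity, contained in $C^\infty(B_2)$. The closed graph theorem then gives $\sup_{B_1}|X_Iu|\le C_I\sup_{B_2}|u|$. By left invariance and homogeneity, $z\mapsto u(x\star D_Rz)$ is again a solution, which yields
\[
R^{|I|}\,|X_Iu(x)|\le C_I\sup_{z\in B_2}|u(x\star D_Rz)|\qquad(x\in\mathbb G,\ R>0).
\]
Apply this to $u=\psi\star f$ with $\psi\in\mathcal S$. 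Note that $L(\psi\star f)=\psi\star Lf=0$. Your Step 1 has the order wrong: for left-invariant $L$ one has $L(f\star\psi)=f\star L\psi$, which is not zero in general. The function $u$ is smooth with $|u(x)|\le C_\psi(1+\|x\|)^M$, where $M$ depends only on $f$. Letting $R\to\infty$ gives $X_Iu\equiv0$ for $|I|>M$, so $u$ is a polynomial of degree at most $M$. Now let $\psi$ run through an approximate identity. Polynomials of degree at most $M$ form a finite-dimensional, hence closed, subspace of $\mathcal S'$, so $f$ is a polynomial. This route uses only the hypoellipticity of $L$, consistent with the results of Rothschild and Luo recalled in Remark \ref{Remark Liouville}.
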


\medskip

Next, for a hypoelliptic operator we are interested in results assuring the
existence of a global fundamental solution with good properties. The following
result holds:

\begin{theorem}
[Fundamental solution on homogeneous groups]\label{thm:PropertiesGammGroups}%
Let $L$ be a $\nu$-ho\-mo\-geneous left-invariant differential operator on a
homogeneous group
\[
\mathbb{G}=(\mathbb{R}^{n},\star,\{D_{\lambda}\}),
\]
with homogeneous dimension $Q$. We assume that both $L$ and $L^{\ast}$ are
hypoelliptic on $\mathbb{G}$, and that $0<\nu<Q. $ Then, there exists a
\emph{unique $\Gamma_{0}\in\mathcal{S}^{\prime}(\mathbb{G})$} (tempered
distribution), \emph{$D_{\lambda}$-homogeneous of degree $\nu-Q$}, satisfying
\[
L\Gamma_{0}=-\delta_{0}\quad\text{in }\mathcal{S}\text{$^{\prime}(\mathbb{G}%
)$},
\]
where $\delta_{0}$ is the Dirac distribution centered at $0$. This $\Gamma
_{0}$ is called \emph{global fun\-da\-mental solution of $L$}.

Furthermore, the following assertions hold.

\begin{enumerate}
\item[(1)] (Smoothness). $\Gamma_{0}$ is smooth on $\mathbb{G}\backslash\{0\}$.

\item[(2)] (Global pointwise estimate). There exists $c>0$ such that
\begin{equation}
\left\vert \Gamma_{0}(x)\right\vert \leq c\Vert x\Vert^{\nu-Q}\text{ \ \ for
every $x\in\mathbb{R}^{n},\,x\neq0$}. \label{eq:GammazeroEstimate}%
\end{equation}
In particular, we have
\begin{equation}
\Gamma_{0}(x)\rightarrow0\text{ \ \ as $\Vert x\Vert\rightarrow+\infty$}.
\label{eq:vanishingGammazeroInf}%
\end{equation}
Moreover, let $I=(i_{1},\ldots,i_{p})\in\{1,\ldots,m\}^{p}$ \emph{(}with
$p\geq1$\emph{)} be a given multi-index. Then, there exists $c>0$, only
depending on $I$, such that
\begin{equation}
\left\vert X_{I}\Gamma_{0}(x)\right\vert \leq c\Vert x\Vert^{\nu-|I|-Q}\text{
\ \ for every $x\in\mathbb{R}^{n},\,x\neq0$}. \label{eq:XIGammazeroEstimate}%
\end{equation}
In particular, we also have
\begin{equation}
X_{I}\Gamma_{0}(x)\rightarrow0\ \ \ \text{as $\Vert x\Vert\rightarrow+\infty$%
}. \label{eq:vanishingXIGammazeroInf}%
\end{equation}

\item[(3)] (Local integrability of $\Gamma_{0}$). We have
\[
\Gamma_{0}\in L_{\mathrm{loc}}^{1}(\mathbb{R}^{n}).
\]
More generally, for every multi-index $I=(i_{1},\ldots,i_{p})\in
\{1,\ldots,m\}^{p}$ \emph{(}with $p\geq1$\emph{)} such that $|I|<\nu$, we
have
\[
X_{I}\Gamma_{0}\in L_{\mathrm{loc}}^{1}(\mathbb{R}^{n}).
\]

\item[(4)] ($\Gamma_{0}$ left-inverts $L$). Let $y\in\mathbb{R}^{n}$ be fixed,
and let
\[
\Gamma(x,y)=\Gamma_{0}(y^{-1}\star x)\qquad(x\neq y).
\]
Then, for every $\varphi\in C_{0}^{\infty}(\mathbb{R}^{n})$ we have
\begin{equation}
\int_{\mathbb{R}^{n}}\Gamma(x,y)L^{\ast}\varphi(x)\,dx=-\varphi(y).
\label{eq:GammazeroleftinverseG}%
\end{equation}

\item[(5)] ($\Gamma_{0}$ right-inverts $L$). Let $\varphi\in C_{0}^{\infty
}(\mathbb{R}^{n})$ be fixed, and let
\begin{equation}
\Lambda_{\varphi}(x)=\int_{\mathbb{R}^{n}}\Gamma(x,y)\varphi(y)\,dy=\int%
_{\mathbb{R}^{n}}\Gamma_{0}(y^{-1}\star x)\varphi(y)\,dy.
\label{eq:defLambdaphi}%
\end{equation}

Then, the following assertions hold.
\begin{align*}
\mathrm{(i)}\,\,  &  \text{$\Lambda_{\varphi}\in C^{\infty}(\mathbb{R}^{n})$,
and $\Lambda_{\varphi}(x)\rightarrow0$ as $\Vert x\Vert\rightarrow+\infty$};\\
\mathrm{(ii)}\,\,  &  \text{$L(\Lambda_{\varphi})=-\varphi$ pointwise in
$\mathbb{R}^{n}$.}%
\end{align*}

\item[(6)] (Relation between $\Gamma_{0}$ and $\Gamma_{0}^{\ast}$). Let
$\Gamma_{0}^{\ast}$ be the global fundamental solution of the formal transpose
$L^{\ast}$ of $L$, see Definition \ref{Def formal transposed}. Then, we have
\begin{equation}
\Gamma_{0}^{\ast}(x)=\Gamma_{0}(x^{-1})\quad\text{for every $x\in
\mathbb{R}^{n}\setminus\{0\}$} \label{eq:GammazeroGammazerostarEqual}%
\end{equation}
In particular, assertions \emph{(1)}-\emph{(5)} hold also for $\Gamma
_{0}^{\ast}$.
\end{enumerate}

Here, $\Vert\cdot\Vert$ is \emph{any} fixed homogeneous norm on $\mathbb{G}$.
\end{theorem}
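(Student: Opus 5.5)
The route I would follow is Folland's: first produce some tempered fundamental solution, then correct it to a homogeneous one, and read off (1)--(6) from homogeneity, hypoellipticity and left invariance.

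\textbf{Uniqueness.} I would prove uniqueness first, since it is the simplest step and is reused in (6). Suppose $\Gamma_0,\Gamma_0'$ are two tempered solutions of $L\Gamma=-\delta_0$, both $D_\lambda$-homogeneous of degree $\nu-Q$. Their difference $f$ lies in $\mathcal{S}'(\mathbb{G})$ and satisfies $Lf=0$. By Theorem \ref{thm:LiouvilleGroups}, which applies because $L$ and $L^\ast$ are hypoelliptic, $f$ is a polynomial. The nonzero $D_\lambda$-homogeneous components of a polynomial have nonnegative degree. Since $f$ is homogeneous of degree $\nu-Q<0$, we get $f=0$.

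\textbf{Existence.} The plan has three steps.

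First, obtain some $K\in\mathcal{S}'$ with $LK=-\delta_0$ by duality. The operator $L^\ast:\mathcal{S}\to\mathcal{S}$ is injective: if $L^\ast\varphi=0$ with $\varphi\in\mathcal{S}$, then $\varphi$ is a polynomial by Theorem \ref{thm:LiouvilleGroups} applied to $L^\ast$, and a polynomial in $\mathcal{S}$ vanishes. It then suffices to prove an a priori bound
\[
|\varphi(0)|\le C\,p(L^\ast\varphi)
\]
for some Schwartz seminorm $p$. Given this bound, the functional $L^\ast\varphi\mapsto-\varphi(0)$ extends by Hahn--Banach to a distribution $K$ with $LK=-\delta_0$.

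Second, prove the a priori bound. I would combine a local hypoelliptic estimate for $L^\ast$ (closed graph theorem on a ball, controlling $\sup_{B}|\varphi|$ by $\|L^\ast\varphi\|$ plus a weak norm of $\varphi$) with the dilations $D_\lambda$, in order to absorb the weak norm.

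Third, homogenize $K$. For each $\lambda>0$, the distribution $\lambda^{Q-\nu}K\circ D_\lambda-K$ is tempered and $L$-harmonic, because $\delta_0$ is homogeneous of degree $-Q$ and $L$ is $\nu$-homogeneous. By Theorem \ref{thm:LiouvilleGroups} it is therefore a polynomial $p_\lambda$, and the family $\{p_\lambda\}$ satisfies a multiplicative cocycle identity. Decompose $p_\lambda$ into homogeneous components; their degrees are integers $\ge 0$ and hence never equal to $\nu-Q<0$. Solving the cocycle identity component by component, $p_\lambda=\lambda^{Q-\nu}P\circ D_\lambda-P$ for a fixed polynomial $P$. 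Then $\Gamma_0=K-P$ still satisfies $L\Gamma_0=-\delta_0$, since $LP=0$, and it is homogeneous. I expect this existence/homogenization step, and especially the a priori estimate in the second step, to be the main obstacle.

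\textbf{Properties (1)--(3).} Hypoellipticity of $L$ on $\mathbb{G}\setminus\{0\}$, where $L\Gamma_0=0$, gives (1). For (2), $\Gamma_0$ and $X_I\Gamma_0$ are smooth and homogeneous of degrees $\nu-Q$ and $\nu-|I|-Q$ respectively, because $X_I$ is $|I|$-homogeneous. Bounding them on the compact set $\{\|x\|=1\}$ and scaling gives \eqref{eq:GammazeroEstimate} and \eqref{eq:XIGammazeroEstimate}. One must also check that the homogeneous distribution coincides with this function on all of $\mathbb{G}$, with no part supported at $0$. A distribution supported at $0$ is a finite combination of derivatives of $\delta_0$, and these have homogeneity degrees $\le -Q<\nu-Q$, so no such term can appear. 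Part (3) then follows from Remark \ref{rem:HomNormLloc}, since $\nu-Q>-Q$ and $\nu-|I|-Q>-Q$ when $|I|<\nu$.

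\textbf{Properties (4)--(6).} For (4), note that $\Gamma(\cdot,y)=\Gamma_0\circ\tau_{y^{-1}}$. Left invariance of $L$ and invariance of Lebesgue measure under translations transfer $L\Gamma_0=-\delta_0$ to $L\Gamma(\cdot,y)=-\delta_y$, which is \eqref{eq:GammazeroleftinverseG}. For (5), $\Lambda_\varphi$ is a convolution of $\Gamma_0$ with $\varphi$, with the derivatives placed on the test function; this gives smoothness and $L\Lambda_\varphi=-\varphi$. The decay at infinity follows from \eqref{eq:GammazeroEstimate}, the quasi-triangle inequality \eqref{norm properties}, and the compact support of $\varphi$. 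For (6), I would verify that $x\mapsto\Gamma_0(x^{-1})$ is tempered, homogeneous of degree $\nu-Q$, and a fundamental solution of $L^\ast$. The last point comes from testing \eqref{eq:GammazeroleftinverseG} against $\varphi$ and exchanging the roles of $x$ and $y$ by Fubini, using the local integrability from (3). Uniqueness applied to $L^\ast$ then gives \eqref{eq:GammazeroGammazerostarEqual}.
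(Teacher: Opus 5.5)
\paragraph{Main gap: the a priori bound in Step 2.} This step is the whole difficulty, not a technical detail. Hahn--Banach gives one direction and continuity of $K$ on $\mathcal{S}$ gives the other, so the bound $|\varphi(0)|\le C\,p(L^{\ast}\varphi)$ is \emph{equivalent} to the existence of a tempered fundamental solution. Steps 1--2 therefore restate the problem rather than reduce it.

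Moreover, the mechanism you suggest cannot prove the bound. Since $L^{\ast}1=0$, any local hypoelliptic estimate $\sup_{B_1}|u|\le C(\|L^{\ast}u\|_{C^k(B_2)}+N(u))$ must have a lower-order term with $N(1)>0$. Apply it to $u=\varphi\circ D_\lambda$.
\begin{itemize}
\item The weak term is small only for $\lambda$ large, e.g.\ $\|\varphi\circ D_\lambda\|_{L^2(B_2)}\le\lambda^{-Q/2}\|\varphi\|_{L^2}$. How large depends on $\|\varphi\|_{L^2}$, which is not controlled by $L^{\ast}\varphi$.
\item In that regime the first term $\lambda^{\nu}\|(L^{\ast}\varphi)\circ D_\lambda\|_{C^k(B_2)}\ge\lambda^\nu\sup_{D_\lambda B_2}|L^{\ast}\varphi|$ blows up.
\item As $\lambda\to0$, the weak term tends to $|\varphi(0)|\,N(1)$, which gives nothing.
\end{itemize}
Closing the argument would need a global bound of some weak norm of $\varphi$ by $L^{\ast}\varphi$, which is exactly what is being proved. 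A compactness-plus-Liouville argument does not rescue it either: normalized sequences can lose their mass at infinity, and constants are $L^{\ast}$-harmonic (think of $\chi\circ D_{1/j}\to1$).

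Be aware also that the Liouville theorem you lean on in Steps 1 and 3 is due to Geller and is proved there by means of fundamental solutions of $L^{\ast}$. Using it to construct $\Gamma_0$ is therefore circular unless you prove it independently. Uniqueness does not need Liouville: the difference of two homogeneous solutions is $L$-harmonic, hence smooth on all of $\mathbb{G}$, and a function continuous at $0$ and homogeneous of negative degree vanishes.

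\paragraph{The missing idea: local solvability plus homogenization.} What you need is a local-to-global step, and the natural one is where $\nu<Q$ enters.
\begin{itemize}
\item Hypoellipticity of $L^{\ast}$ gives local solvability of $L$ at $0$ (classical). So there is $E$ with $LE=-\delta_0$ near $0$, and $E$ is smooth off $0$ by hypoellipticity of $L$.
\item With a cutoff $\chi$, $\Phi=\chi E$ satisfies $L\Phi=-\delta_0+g$, where $g\in C_0^\infty$ vanishes near $0$.
\item Set $T_\mu u=\mu^{Q-\nu}u\circ D_\mu$ and $Z=\sum_j\sigma_jx_j\partial_{x_j}$. Then $\frac{d}{d\mu}T_\mu\Phi=\mu^{-1}T_\mu h$ with $h=(Q-\nu+Z)\Phi$.
\item Using $LZ=ZL+\nu L$ and $Z\delta_0=-Q\delta_0$, one gets $Lh=(Q+Z)g$, which vanishes near $0$. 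Hence $h\in C_0^\infty$.
\item Since $|\langle T_\mu h,\psi\rangle|\le\mu^{Q-\nu}\|h\|_\infty\|\psi\|_{L^1}$, the limit $\Gamma_0=\lim_{\lambda\to0^+}T_\lambda\Phi=\Phi-\int_0^1\mu^{-1}T_\mu h\,d\mu$ exists in $\mathcal{S}'$, precisely because $\nu<Q$.
\item $\Gamma_0$ is invariant under every $T_\kappa$, i.e.\ homogeneous of degree $\nu-Q$, and $L\Gamma_0=\lim_{\lambda\to0^+}(-\delta_0+\lambda^{Q}g\circ D_\lambda)=-\delta_0$.
\end{itemize}
This replaces your Steps 1--3 entirely: no Hahn--Banach, no a priori bound, no cocycle, no Liouville. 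For comparison, the paper reproves none of this. It quotes Folland for existence, uniqueness, (1) and (4)--(6), and derives (2)--(3) from homogeneity and the local integrability of $\|\cdot\|^{\alpha}$, exactly as you do.

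\paragraph{A smaller gap in (6).} Fubini applied to (4) only reproduces $L\Lambda_\varphi=-\varphi$. The identity you need, $\int\Gamma_0(x^{-1})L\varphi(x)\,dx=-\varphi(0)$, is the left-inverse property $\Lambda_{L\varphi}=-\varphi$. On a noncommutative group this is not a rearrangement of (4): left-invariant derivatives of $\Gamma_0(y^{-1}\star x)$ in $x$ do not become left-invariant derivatives in $y$. What Fubini and (4) do give is $L(\Lambda_{L\varphi}+\varphi)=0$. Since $\Lambda_{L\varphi}+\varphi$ is bounded and vanishes at infinity by (5)(i), Liouville (now legitimately available) forces $\Lambda_{L\varphi}=-\varphi$. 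Evaluating at $0$ shows that $x\mapsto\Gamma_0(x^{-1})$ is a fundamental solution of $L^{\ast}$, and your uniqueness argument then applies. For the same reason, in (5) you should obtain $L\Lambda_\varphi=-\varphi$ by duality from (4), not by moving $L$ onto $\varphi$, which would act through right translations.
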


This result is due to Folland, \cite{Fo2}. More precisely: existence and
uniqueness of $\Gamma_{0}$ and point (1) are contained in \cite[Thm.\,2.1.]%
{Fo2}; point (6) is the Remark after \cite[Thm.\,2.1.]{Fo2}. Point (2) is an
easy consequence of point (1). Point (3) follows from point (2) in view of the
integrability properties discussed in Remark \ref{rem:HomNormLloc}. Points (4)
and (5) are substantially contained in \cite[Cor.\,2.8]{Fo2}, where it is
proved that $\Gamma_{0}$ right-inverts and left-inverts $L$ in the
distributional sense. Then point (ii) follows from (i) and the fact that
$\mathcal{L}(\Lambda_{\varphi})=-\varphi$ in the sense of distributions.

\begin{remark}
We stress the fact that the above theorem does not depend on the explicit form
of the operator $L$. In particular, we can apply it to Rockland operators of
type (\ref{power of Rockland}) as well as to powers of H\"{o}rmander operators
(\ref{power of Hormander}), whenever the required relation between the
homogeneity degree of $L$ and the homogeneous dimension $Q$ is satisfied.
\end{remark}

\section{Properties of generalized Rockland operators}

\label{sec:PropGenRockland}

\subsection{Lifting to homogeneous groups\label{sec lifting}}

Taking into account the results recalled in the previous section for
homogeneous and \emph{left-invariant} operators on a homogeneous Lie group
$\mathbb{G}$, we can now begin the study of generalized Rockland operators.

To begin with, we recall that, if $X_{1},\ldots,X_{m}$ satisfy Assump\-tion
\textbf{(H1)}, we know from Remarks \ref{rem:HormanderRn} and
\ref{rem:ConseguenzeH1H2} that

\begin{enumerate}
\item the $X_{i}$'s satisfy H\"{o}rmander's condition at every point of
$\mathbb{R}^{n}$;

\item the Lie algebra $\mathrm{Lie}(X)$ generated by $X = \{X_{1},\ldots
,X_{m}\}$ has \emph{finite dimension}, say $N\in\mathbb{N}$, and we have
$N\geq n$.
\end{enumerate}

Furthermore, as noted in \cite[Rem.\,1.2]{BBB-fundsol}, the case
\[
N=n
\]
occurs \emph{if and only if} there exists a homogeneous Lie group
$\mathbb{G}=(\mathbb{R}^{n},\star,\delta_{\lambda})$ (in the sense of
De\-fi\-nition \ref{Def homogeneous group}, with $\{\delta_{\lambda
}\}_{\lambda}$ as in \eqref{dilations}) such that
\[
\mathrm{Lie}(\mathbb{G})=\mathrm{Lie}(X),
\]
that is, if and only if the $X_{i}$'s are left-invariant on some homogeneous
Lie group on $\mathbb{R}^{n}$. However, in that situation the known results
recalled in Section \ref{sec left-invariant} apply. Therefore, from now on we
tacitly understand that
\begin{equation}%
\begin{split}
(i)\,\,  &  \text{$X_{1},\ldots,X_{m}$ satisfy Assumption \textbf{(H1)}%
;}\\[0.1cm]
(ii)\,\,  &  p=N-n=\mathrm{dim}(\mathrm{Lie}(X))-n\geq1;
\end{split}
\label{eq:pgeqone}%
\end{equation}
Moreover, according to $(ii)$, we denote the points $z\in\mathbb{R}%
^{N}=\mathbb{R}^{n}\times\mathbb{R}^{p}$ by%
\[
z=(x,\xi),\quad\text{with $x\in\mathbb{R}^{n}$ and $\xi\in\mathbb{R}^{p}$}.
\]
Thanks to assumption \eqref{eq:pgeqone}-$(ii)$ (which, as discussed above, is
\emph{non-restric\-ti\-ve}), we can then state the following (crucial)
\emph{lifting property} of the $X_{i}$'s.

\begin{theorem}
[{Lifting property, see {\cite[Thm.\,3.2]{BB-lift}}}]\label{thm:Adrift}Given
in $\mathbb{R}^{n}$ the sy\-s\-tem
\[
X = \{X_{1},...,X_{m}\}
\]
satisfying Assumption (H1), there exists a \emph{ho\-mo\-ge\-neous group
$\mathbb{G}=(\mathbb{R}^{N},\star,D_{\lambda})$} (with $N$ as above), of
homogeneous dimension $Q>q$ \emph{(}where $q$ is as in Assumption
\textbf{\emph{(H1)}}\emph{)}, and there exists a system
\[
\widetilde{X}=\{\widetilde{X}_{1},\ldots,\widetilde{X}_{m}\}\subseteq
\mathrm{Lie}(\mathbb{G})
\]
satisfying the following properties:

\begin{enumerate}
\item $\mathrm{Lie}(\widetilde{X})=\mathrm{Lie}(\mathbb{G})$;

\item $\widetilde{X}_{i}$ is $D_{\lambda}$-homogeneous of degree $\nu_{i}$;

\item for every $1\leq j\leq m$, we have
\begin{equation}
\label{eq:tildeXiliftXi}\widetilde{X}_{j} = X_{j}+R_{j}(x,\xi),
\end{equation}
where $R_{j}(x,\xi)$ is a \emph{non-vanishing} smooth vector field operating
only in the $\xi\in\mathbb{R}^{p}$ varia\-ble, with coefficients possibly
depending on $(x,\xi)$.
\end{enumerate}

Finally, the family of dilations $\{D_{\lambda}\}_{\lambda}$ takes the
following \emph{lifted} form
\begin{equation}
D_{\lambda}(x,\xi)=(\delta_{\lambda}(x),E_{\lambda}(\xi)),
\label{eq:Dlambdalift}%
\end{equation}
where $\delta_{\lambda}$ is as in Assumption \textbf{(H1)}, and $E_{\lambda
}(\xi)=(\lambda^{\tau_{1}}\xi_{1},\ldots,\lambda^{\tau_{p}}\xi_{p})$ for
suitable non-negative integers $1\leq\tau_{1}\leq\ldots\leq\tau_{p}$. Hence,
we have
\begin{equation}
\textstyle Q=\sum_{i=1}^{n}\sigma_{i}+\sum_{j=1}^{p}\tau_{j}\equiv
q+\mathcal{E}. \label{eq:dimqGruppiOm}%
\end{equation}

\end{theorem}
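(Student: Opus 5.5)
The plan follows the construction of Biagi--Bonfiglioli \cite{BB-lift}, adapted to vector fields of possibly different homogeneity degrees $\nu_i$. Let $\mathfrak a=\mathrm{Lie}(X)$, which by Remark \ref{rem:ConseguenzeH1H2} is finite dimensional ($\dim\mathfrak a=N$) and nilpotent of step at most $\sigma_n$. Each $X_j$ is $\delta_\lambda$-homogeneous of degree $\nu_j$, so $\mathfrak a$ is graded: $\mathfrak a=\bigoplus_{k\ge1}\mathfrak a_k$, where $\mathfrak a_k$ is the span of the brackets of weight $k$, i.e.\ the elements that are $\delta_\lambda$-homogeneous of degree $k$. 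This grading is well defined because vector fields homogeneous of different degrees are linearly independent. Hence the maps $\Delta_\lambda=\lambda^k$ on $\mathfrak a_k$ are Lie algebra automorphisms. By the Campbell--Baker--Hausdorff--Dynkin formula, $\mathfrak a$ identified with $\mathbb R^N$ becomes a homogeneous group $\mathbb G$ with dilations $\Delta_\lambda$, and $\mathrm{Lie}(\mathbb G)\cong\mathfrak a$. Let $\widetilde X_j$ be the left-invariant field on $\mathbb G$ corresponding to $X_j\in\mathfrak a_{\nu_j}$. Then (1) holds because the $X_j$ generate $\mathfrak a$, and (2) holds by construction.

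The key step is to choose exponential-type coordinates that separate $x$ from $\xi$. Since H\"ormander's condition holds at $0$, evaluation at the origin $\mathfrak a\to\mathbb R^n$, $Y\mapsto Y(0)$, is surjective. I pick a homogeneous basis $Z_1,\dots,Z_n$ with $Z_i(0)=e_i$ and $Z_i\in\mathfrak a_{\sigma_i}$. This is possible because $\partial_{x_i}$ at $0$ has weight $\sigma_i$: the evaluation of $\mathfrak a_k$ lands in the span of $\{e_i:\sigma_i=k\}$, and surjectivity then forces it to be onto, degree by degree. I complete this with a homogeneous basis $W_1,\dots,W_p$ of the kernel of the evaluation map, which is graded, with degrees $\tau_1\le\dots\le\tau_p$; the $W_j$ are nonzero, so $\tau_j\ge1$. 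In the coordinates $(x,\xi)$, the dilation becomes $D_\lambda=(\delta_\lambda,E_\lambda)$, giving \eqref{eq:Dlambdalift}, and \eqref{eq:dimqGruppiOm} follows. Moreover $Q>q$ because $p\ge1$ by \eqref{eq:pgeqone}.

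Next I verify \eqref{eq:tildeXiliftXi}. I consider the map $\pi:\mathbb G\to\mathbb R^n$, $\pi(\exp Y)=\exp(Y)(0)$, meaning the time-one flow of the vector field $Y\in\mathfrak a$ started at $0$. The standard fact is that the left-invariant field $\widetilde Y$ is $\pi$-related to $Y$, since the flow of $\widetilde Y$ is right multiplication by $\exp(tY)$. The substantive point is to check, using the coordinates above, that $\pi(x,\xi)=x$, i.e.\ that $\pi$ is the linear projection. I expect this to be the main obstacle. My plan is to use the triangular (pyramid) structure \eqref{eq:piramydshape} and homogeneity: $\pi$ is $D_\lambda$-to-$\delta_\lambda$ equivariant and polynomial. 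Its differential at $0$ is the projection onto $x$. I would then modify the coordinates by a homogeneous polynomial change of variables, as in \cite{BB-lift}: replace $x$ by $\pi(x,\xi)$, which is triangular and homogeneous, so $(\pi(z),\xi)$ is a global homogeneous diffeomorphism. This preserves the group structure up to isomorphism and makes $\pi$ the projection.

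Once $\pi$ is the projection, $\pi$-relatedness means exactly that $\widetilde X_j=X_j+R_j$, where $R_j$ acts only in $\xi$. It remains to show $R_j\not\equiv0$. If some $R_j$ vanished identically, then $\widetilde X_j$ would be independent of $\xi$; the argument in \cite[Thm.\,3.2]{BB-lift} rules this out by a dimension count (cf.\ the characterization $N=n$ mentioned before the theorem). If this step resists, the fallback is to cite \cite[Thm.\,3.2]{BB-lift} directly, which gives the non-vanishing statement.
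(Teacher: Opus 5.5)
Your construction is sound up to the lifted form $\widetilde X_j=X_j+R_j$. That construction is: grade $\mathrm{Lie}(X)$ by homogeneity degree, build the CBHD group on it, pick an adapted homogeneous basis, take the orbit map $\pi$, and make the triangular change of variables. It is essentially the Biagi--Bonfiglioli construction that the paper imports; the paper gives no proof beyond saying that the case $\nu_i=1$ of \cite{BB-lift} extends. The $\pi$-relatedness uses the global identity $\Phi^{Z}_1\circ\Phi^{Y}_1=\Phi^{Y\diamond Z}_1$ for the flows. This holds here because the fields are complete and polynomial.

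The step you flag as the main obstacle is fine. Since $\pi\circ D_\lambda=\delta_\lambda\circ\pi$ and $d\pi(0)$ is the projection, $\pi_i$ is $D_\lambda$-homogeneous of degree $\sigma_i$ with linear part $x_i$. Hence $\pi_i=x_i+P_i$, with $P_i$ a polynomial in variables of degree $<\sigma_i$. So $(x,\xi)\mapsto(\pi(x,\xi),\xi)$ is a triangular, $D_\lambda$-equivariant polynomial diffeomorphism fixing $0$.

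The genuine gap is the last step, and it cannot be closed: the claim that \emph{each} $R_j$ is not identically zero is false in general.

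\begin{itemize}
\item A dimension count only shows that the $R_j$ are not \emph{all} zero. Otherwise every element of $\mathrm{Lie}(\widetilde X)=\mathrm{Lie}(\mathbb G)$ would have zero $\partial_\xi$-components, which is impossible since $p\ge1$.
\item A single left-invariant $\widetilde X_j$ being independent of $\xi$ is no contradiction. For $X_1=\partial_{x_1}$, $X_2=x_1\partial_{x_2}$, your construction with $Z_1=X_1$, $Z_2=[X_1,X_2]$, $W_1=X_2$ gives $\widetilde X_1=\partial_{x_1}$ and $\widetilde X_2=x_1\partial_{x_2}+\partial_\xi$.
\item More decisively, property (2) forces the $\partial_{\xi_k}$-coefficient of $R_j$ to be a smooth $D_\lambda$-homogeneous function of degree $\tau_k-\nu_j$. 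It is therefore identically $0$ whenever $\nu_j>\tau_k$ for all $k$.
\end{itemize}

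Concrete example: in $\mathbb{R}^3$ take $X_1=\partial_{x_1}$, $X_2=x_1\partial_{x_2}$, $X_3=\partial_{x_3}$ and $\delta_\lambda(x)=(\lambda x_1,\lambda^2x_2,\lambda^5x_3)$. Then (H1) holds with degrees $(1,1,5)$, $N=4$ and $p=1$. In any lifting as in the statement, $\widetilde X_1,\widetilde X_2$ are independent elements of $\mathrm{Lie}(\mathbb G)$, both homogeneous of degree $1$. The homogeneous degrees in $\mathrm{Lie}(\mathbb G)$ are exactly the exponents of $D_\lambda$, counted with multiplicity, namely $1,2,5,\tau_1$. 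Hence $\tau_1=1$, and so $R_3\equiv0$ in every such lifting.

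So neither a dimension count nor a citation can deliver non-vanishing of every $R_j$. What can be proved, and what you should state, is that not all $R_j$ vanish.
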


\begin{remark}
\label{rem:CambiDiVariabile}As a matter of fact, Theorem \ref{thm:Adrift} is
established in \cite{BB-lift} in the particular case in which all the $X_{i}%
$'s are \emph{$\delta_{\lambda}$-homogeneous of degree $1$} \emph{(}%
na\-me\-ly, when $\nu_{1}=\ldots=\nu_{m}=1$\emph{)}; however, it is not
difficult to see that the proof given in \cite{BB-lift} can be easily extended
to the general case considered here. \vspace{0.1cm}

Moreover, from the \emph{explicit} construction of $\mathbb{G}$, one can
derive the following useful facts (see \cite[Rem.\,8]{BB-lift} and
\cite[Lem.\,4.5]{BBB-fundsol}).

\begin{enumerate}
\item Let $x,y\in\mathbb{R}^{n}$ be arbitrarily fixed, and let
\[
\Psi_{x,y}:\mathbb{R}^{p}\rightarrow\mathbb{R}^{p},\qquad\Psi_{x,y}(\xi
)=\pi_{p}\big((y,0)^{-1}\star(x,\xi)\big),
\]
where $\pi_{p}:\mathbb{R}^{N}\rightarrow\mathbb{R}^{p}$ denotes the projection
of $\mathbb{R}^{N}$ onto $\mathbb{R}^{p}$. Then, this map $\Psi_{x,y}$ is a
\emph{smooth diffeomorphism} of $\mathbb{R}^{p}$ onto itself, smooth\-ly
depending on the fixed $x,y$, whose Jacobian determinant is $\pm1$.

In other words, the \emph{change of variable} $\xi=\Psi_{x,y}^{-1}(\zeta)$
satisfies
\[
\pi_{p}\big((y,0)^{-1}\star(x,\xi)\big)=\zeta\quad\text{and}\quad d\xi
=d\zeta.
\]

\item Let $x,y\in\mathbb{R}^{n}$ be arbitrarily fixed, and let
\[
\Phi_{x,y}:\mathbb{R}^{p}\rightarrow\mathbb{R}^{p},\qquad\Phi_{x,y}(\xi
)=\pi_{p}\big((y,0)\star(y,\xi)^{-1}\star(x,0)\big).
\]
Then, this map $\Phi_{x,y}$ is a \emph{smooth diffeomorphism} of
$\mathbb{R}^{p}$ onto itself, smooth\-ly depending on the fixed $x,y$, whose
Jacobian determinant is $\pm1$. Moreo\-ver, the following identity holds
\[
(y,0)^{-1}\star(x,\Phi_{x,y}(\zeta))=(y,\zeta)^{-1}\star(x,0).
\]
In other words, the \emph{change of variable} $\xi=\Phi_{x,y}(\zeta)$
satisfies
\begin{equation}
(y,0)^{-1}\star(x,\xi)=(y,\zeta)^{-1}\star(x,0)\quad\text{and}\quad
d\xi=d\zeta. \label{eq:CVPhiIdentity}%
\end{equation}

\end{enumerate}

We will repeatedly exploit these facts in the sequel.
\end{remark}

The \emph{global lifting property} in Theorem \ref{thm:Adrift} is the
\emph{key ingredient} in our investi\-gati\-on on general higher-order
homogeneous operators of the form
\begin{equation}
\label{eq:GeneralLGlobalProperties}\mathcal{L} = \sum_{|I| = \nu}c_{I}
X_{I}=\sum
_{\begin{subarray}{c} I = (i_1,\ldots,i_k) \\ \nu_{i_1}+\cdots+\nu_{i_k} = \nu \end{subarray}}%
c_{I} X_{i_{1}}\cdots X_{i_{k}}%
\end{equation}
(where $c_{I}\in\mathbb{R}$ and $\nu> 0$ is a fixed positive integer).
\vspace{0.05cm}

Indeed, following the notation of Theorem \ref{thm:Adrift} (used here and in
the sequel), we observe that the operator, defined on the group $\mathbb{G}%
=\mathbb{R}^{N}$,
\begin{equation}
\widetilde{\mathcal{L}}=\sum_{|I|=\nu}c_{I}\widetilde{X}_{I}=\sum
_{\begin{subarray}{c} I = (i_1,\ldots,i_k) \\ \nu_{i_1}+\cdots+\nu_{i_k} = \nu \end{subarray}}%
c_{I}\widetilde{X}_{i_{1}}\cdots\widetilde{X}_{i_{k}} \label{eq:tildeLgeneral}%
\end{equation}
is \emph{homogeneous of degree $\nu$} (with respect to the family of dilations
$\{D_{\lambda}\}_{\lambda}$) and also \emph{left-invariant} (with respect to
$\star$); thus, if we assume that
\begin{equation}
\text{both $\widetilde{\mathcal{L}}$ and $(\widetilde{\mathcal{L}})^{\ast}$
are hypoelliptic}, \label{eq:AssLLstartildeHypo}%
\end{equation}
we may apply to $\widetilde{\mathcal{L}}$ all the results recalled in the
previous section, thereby ob\-taining several \emph{global properties}
(existence of a well-behaved global fundamental solution, validity of
Liouville-type theorems, etc.).

On the other hand, owing to \eqref{eq:tildeXiliftXi}, we see that the operator
$\widetilde{\mathcal{L}}$ \emph{lifts} $\mathcal{L}$ in the following sense:
denoting by $\pi_{n}$ the projection of $\mathbb{R}^{N}=\mathbb{R}^{n}%
\times\mathbb{R}^{p}$ onto $\mathbb{R}^{n}$, for every smooth function $u\in
C^{\infty}(\mathbb{R}^{n})$ we have
\begin{equation}
\widetilde{\mathcal{L}}(u\circ\pi_{n})=(\mathcal{L}u)\circ\pi_{n};
\label{eq:LhtildeliftsLh}%
\end{equation}
on account of \eqref{eq:LhtildeliftsLh}, it is therefore natural to attempt to
derive analogous \emph{global proper\-ties} for $\mathcal{L}$ by means of a
\emph{saturation argument}. This type of argument has already proved effective
in \cite{BB-lift}, where the existence of a well-behaved global funda\-mental
solution for the sum of squares $\mathcal{L}=\sum_{i=1}^{m}X_{i}^{2}$ has been established.

We can now also rephrase Definition \ref{Def gen Rockland 1} as follows:

\begin{definition}
[Generalized Rockland operators, second formulation]\label{Def gen Rockland 2}%
We say that an operator $\mathcal{L}$ of kind
(\ref{eq:GeneralLGlobalProperties}) is a generalized Rockland operator if
$X_{1},...,X_{m}$ satisfy (\ref{eq:pgeqone}) and the lifted operator
$\widetilde{\mathcal{L}}$ in (\ref{eq:tildeLgeneral}) is hypoelliptic on the
lifted space $\mathbb{R}^{N}$.
\end{definition}

Since the lifted vector fields $\widetilde{X}_{i}$ are particular homogeneous
left-invariant vector fields, if $\mathcal{L}$ satisfies Definition
\ref{Def gen Rockland 1}, in particular the lifted operator
$\widetilde{\mathcal{L}}$ will be hypoelliptic. Therefore the present
Definition \ref{Def gen Rockland 2} is actually more general than Definition
\ref{Def gen Rockland 1}.

\subsection{Hypoellipticity of $\mathcal{L}$\label{sec hypoellipticity}}

We begin by showing how Theorem \ref{thm:Adrift} can be used to prove the
\emph{hypoellipticity of any operator $\mathcal{L}$} of the form
\eqref{eq:GeneralLGlobalProperties}, provided that the same property holds for
the \emph{lifted operator $\widetilde{\mathcal{L}}$}. \medskip

\begin{proof}
[Proof of Theorem \ref{Thm main hypoellipticity}]Let $\Omega\subseteq
\mathbb{R}^{n}$ be an open set, and let $\Lambda\in\mathcal{D}^{\prime}%
(\Omega)$ be a di\-stri\-bution on $\Omega$. We assume that there exists $f\in
C^{\infty}(\Omega)$ such that
\[
\mathcal{L}\Lambda=f\quad\text{in $\mathcal{D}^{\prime}(\Omega)$},
\]
that is,
\[
\left\langle \mathcal{L}\Lambda,\varphi\right\rangle =\int_{\Omega}%
f\varphi\,dx\quad\forall\,\,\varphi\in C_{0}^{\infty}(\Omega).
\]
We then define $\widetilde{\Lambda}=\Lambda\otimes1\in\mathcal{D}^{\prime
}(\Omega\times\mathbb{R}^{p})$ (where $1$ denotes the distribution on
$\mathbb{R}^{p}$ associated with the locally-integrable function $u\equiv1$,
and $\otimes$ is the usual tensor pro\-duct between distributions), and we
claim that
\begin{equation}
\widetilde{\mathcal{L}}\widetilde{\Lambda}=f\quad\text{in $\mathcal{D}%
^{\prime}(\Omega\times\mathbb{R}^{p})$} \label{eq:ClaimLtildeLambdatilde}%
\end{equation}
To prove this claim, we exploit an \emph{approximation argument}. Let
$\{\theta_{k}\}_{k}\subseteq C^{\infty}(\Omega)$ be a sequence of \emph{smooth
functions} in $\Omega$ such that
\[
\text{$\theta_{k}\rightarrow\Lambda$ in $\mathcal{D}^{\prime}(\Omega)$ (as
$k\rightarrow+\infty$)}.
\]
Then, recalling that $\widetilde{\Lambda}=\Lambda\otimes1$ is the distribution
on $\Omega\times\mathbb{R}^{p}$ defined by
\begin{equation}%
\begin{split}
\langle\widetilde{\Lambda},\phi\rangle &  =\left\langle \Lambda,x\mapsto
\langle1,\xi\mapsto\phi(x,\xi)\rangle\right\rangle \\
&  =\left\langle \Lambda,\int_{\mathbb{R}^{p}}\phi(\cdot,\xi)\,d\xi
\right\rangle \quad\forall\,\,\phi\in C_{0}^{\infty}(\Omega\times
\mathbb{R}^{p}),
\end{split}
\label{eq:defTensorProd}%
\end{equation}
and since the operator $\widetilde{\mathcal{L}}$ \emph{is a lifting} of
$\mathcal{L}$, see \eqref{eq:LhtildeliftsLh}, for every $\phi\in C_{0}%
^{\infty}(\Omega\times\mathbb{R}^{p})$ we have the following computation
\begin{align*}
\langle\widetilde{\mathcal{L}}\widetilde{\Lambda},\phi\rangle &
=\langle\widetilde{\Lambda},\widetilde{\mathcal{L}}^{\ast}\phi\rangle
=\left\langle \Lambda,x\mapsto\int_{\mathbb{R}^{p}}(\widetilde{\mathcal{L}%
}^{\ast}\phi)(x,\xi)\,d\xi\right\rangle \\
&  =\lim_{k\rightarrow+\infty}\int_{\Omega}\theta_{k}(x)\left(  \int%
_{\mathbb{R}^{p}}(\widetilde{\mathcal{L}}^{\ast}\phi)(x,\xi)\,d\xi\right)
dx\\
&  =\lim_{k\rightarrow+\infty}\int_{\Omega\times\mathbb{R}^{p}}\theta
_{k}(x)(\widetilde{\mathcal{L}}^{\ast}\phi)(x,\xi)\,dx\,d\xi\\
&  (\text{integrating by parts, and using \eqref{eq:LhtildeliftsLh}})\\
&  =\lim_{k\rightarrow+\infty}\int_{\Omega\times\mathbb{R}^{p}}(\mathcal{L}%
\theta_{k})(x)\phi(x,\xi)\,dx\,d\xi\\
&  =\lim_{k\rightarrow+\infty}\int_{\Omega}(\mathcal{L}\theta_{k})(x)\left(
\int_{\mathbb{R}^{p}}\phi(x,\xi)\,d\xi\right)  dx\\
&  =\left\langle \mathcal{L}\Lambda,\int_{\mathbb{R}^{p}}\phi(\cdot,\xi
)\,d\xi\right\rangle .
\end{align*}
From this, since $\mathcal{L}\Lambda=f$ in $\mathcal{D}^{\prime}(\Omega)$, we
obtain
\[
\langle\widetilde{\mathcal{L}}\widetilde{\Lambda},\phi\rangle=\int_{\Omega
}f(x)\left(  \int_{\mathbb{R}^{p}}\phi(x,\xi)\,d\xi\right)  dx=\int%
_{\Omega\times\mathbb{R}^{p}}f(x)\phi(x,\xi)\,dx\,d\xi,
\]
and this proves the claimed \eqref{eq:ClaimLtildeLambdatilde} (by the
arbitrariness of $\phi$). \vspace{0.1cm}

Now we have established \eqref{eq:ClaimLtildeLambdatilde}, we can easily
complete the proof of the theorem. Indeed, since $f$ can be thought of as a
smooth function on $\Omega\times\mathbb{R}^{p}$, and since we are assuming
that the operator $\widetilde{\mathcal{L}}$ is hypoelliptic on every open
subset of $\mathbb{R}^{N}$, there exists $U\in C^{\infty}(\Omega
\times\mathbb{R}^{p})$ such that
\[
\text{$\widetilde{\Lambda}=U$ in $\mathcal{D}^{\prime}(\Omega\times
\mathbb{R}^{p})$}.
\]
As a consequence, \emph{given any $\varphi\in C_{0}^{\infty}(\Omega)$}, and
choosing (once and for all) a test function $\psi_{0}\in C_{0}^{\infty
}(\mathbb{R}^{p})$ satisfying $0\leq\psi_{0}\leq1$ on $\mathbb{R}^{p}$ and
\[
\int_{\mathbb{R}^{p}}\psi_{0}\,d\xi=1,
\]
we obtain (see \eqref{eq:defTensorProd} with $\phi=\varphi\psi_{0}\in
C_{0}^{\infty}(\Omega\times\mathbb{R}^{p})$)
\begin{align*}
\langle\Lambda,\varphi\rangle &  =\left\langle \Lambda,\varphi\cdot\left(
\int_{\mathbb{R}^{p}}\psi_{0}(\xi)\,d\xi\right)  \right\rangle =\langle
\widetilde{\Lambda},\varphi\psi_{0}\rangle\\
&  =\int_{\Omega\times\mathbb{R}^{p}}U(x,\xi)\varphi(x)\psi_{0}(\xi
)\,dx\,d\xi\\
&  =\int_{\Omega}\left(  \int_{\mathbb{R}^{p}}U(x,\xi)\psi_{0}(\xi
)\,d\xi\right)  \varphi(x)\,dx,
\end{align*}
that is,
\[
\text{$\Lambda=u$ in $\mathcal{D}^{\prime}(\Omega)$},\qquad\text{where
$u(x)=\int_{\mathbb{R}^{p}}U(x,\xi)\psi_{0}(\xi)\,d\xi$}.
\]
Since $u\in C^{\infty}(\Omega)$, we conclude that $\mathcal{L}$ is $C^{\infty
}$-hypoelliptic in $\Omega$.
\end{proof}

\subsection{A Liouville-type theorem for $\mathcal{L}$\label{subsec Liouville}%
}

We now turn to proving that Theorem \ref{thm:Adrift} also allows us to derive
a \emph{Liouville-type property} for $\mathcal{L}$, analogous to the one
stated in Theorem \ref{thm:LiouvilleGroups} for homogeneous operators that are
\emph{left-invariant} on some homogeneous group. As noted in Remark
\ref{Remark Liouville}, this result could also be deduced by applying a
general result on homogeneous hypoelliptic operators proved in \cite{L}.
However, we think that the following short alternative proof of this result
has an independent interest.

\bigskip

\begin{proof}
[Proof of Theorem \ref{Thm main Liouville}]We proceed essentially as in the
proof of Theorem \ref{Thm main hypoellipticity}. To begin with, we consider
the distribution on $\mathbb{R}^{N}=\mathbb{R}^{n}\times\mathbb{R}^{p}$ given
by
\[
\widetilde{\Lambda}=\Lambda\otimes1
\]
We then observe that, since both $\Lambda$ and $1$ are tempered distributions
(on $\mathbb{R}^{n}$ and $\mathbb{R}^{p}$, respectively), then
$\widetilde{\Lambda}\in\mathcal{S}^{\prime}(\mathbb{R}^{N})$; moreover, since
\[
\text{$\mathcal{L}\Lambda=0$ in $\mathcal{S}^{\prime}(\mathbb{R}^{n})$},
\]
an argument \emph{entirely analogous} to the one exploited in the proof of
Theorem~\ref{Thm main hypoellipticity} (the only difference being that here
the test functions belong to $\mathcal{S}(\mathbb{R}^{N})$ rather than to
$C_{0}^{\infty}(\mathbb{R}^{n})$) shows that
\[
\widetilde{\mathcal{L}}\widetilde{\Lambda}=0\quad\text{in $\mathcal{S}%
^{\prime}(\mathbb{R}^{N})$}.
\]
From this, since we are assuming \eqref{eq:AssLLstartildeHypo}, we may apply
Theorem \ref{thm:LiouvilleGroups}, ensuring the existence of a polynomial
function $P=P(x,\xi)$ in $\mathbb{R}^{N}$ such that
\begin{equation}
\text{$\widetilde{\Lambda}=P$ in $\mathcal{S}^{\prime}(\mathbb{R}^{N})$}.
\label{eq:tildeLambdaPolynomial}%
\end{equation}
With \eqref{eq:tildeLambdaPolynomial} at hand, we can easily complete the
proof of the theorem. Indeed, \emph{given any $\varphi\in\mathcal{S}%
(\mathbb{R}^{n})$}, and choosing once and for all $\psi_{0}\in\mathcal{S}%
(\mathbb{R}^{p})$ such that
\[
\text{$0\leq\psi_{0}\leq1$ on $\mathbb{R}^{p}$}\quad\text{and}\quad
\int_{\mathbb{R}^{p}}\psi_{0}\,d\xi=1,
\]
we obtain, by \eqref{eq:defTensorProd},
\begin{align*}
\langle\Lambda,\varphi\rangle &  =\left\langle \Lambda,\varphi\cdot\left(
\int_{\mathbb{R}^{p}}\psi_{0}(\xi)\,d\xi\right)  \right\rangle =\langle
\widetilde{\Lambda},\varphi\psi_{0}\rangle\\
&  =\int_{\mathbb{R}^{n}\times\mathbb{R}^{p}}P(x,\xi)\varphi(x)\psi_{0}%
(\xi)\,dx\,d\xi\\
&  =\int_{\mathbb{R}^{n}}\left(  \int_{\mathbb{R}^{p}}P(x,\xi)\psi_{0}%
(\xi)\,d\xi\right)  \varphi(x)\,dx,
\end{align*}
that is,
\[
\text{$\Lambda=p$ in $\mathcal{S}^{\prime}(\mathbb{R}^{n})$},\qquad\text{where
$p(x)=\int_{\mathbb{R}^{p}}P(x,\xi)\psi_{0}(\xi)\,d\xi$}.
\]
Since $p\in C^{\infty}(\mathbb{R}^{n})$ is a polynomial function (as the same
is true of $P$), we obtain the desired \eqref{eq:LambdapolinomioGeneral}, and
the proof is complete.
\end{proof}

\subsection{Existence of a global fundamental
solution\label{sec existence fund sol}}

Finally, we turn to showing that a \emph{lifting-plus-saturation argument}
arising from Theorem \ref{thm:Adrift} allows us to pro\-ve the existence of a
well-behaved global fundamental solution for $\mathcal{L}$. \vspace{0.1cm}

Roughly put, our argument goes as follows. First of all, since $\widetilde{X}%
_{1},\ldots,\widetilde{X}_{m}$ are \emph{left-invariant} (with respect to the
ho\-mo\-geneous gro\-up $\mathbb{G}=(\mathbb{R}^{N},\star,D_{\lambda})$) and
\emph{homogeneous} (with respect to the family of dilations $\{D_{\lambda
}\}_{\lambda}$ on the higher-di\-men\-sional space $\mathbb{R}^{N}$), the
\emph{lifted operator} $\widetilde{\mathcal{L}}$ defined in
\eqref{eq:tildeLgeneral}, that is,
\[
\widetilde{\mathcal{L}}=\sum_{|I|=\nu}c_{I}\widetilde{X}_{I}=\sum
_{\begin{subarray}{c}
I = (i_1,\ldots,i_k)	\\
\nu_{i_1}+\cdots+\nu_{i_k} = \nu
\end{subarray}}c_{I}\widetilde{X}_{i_{1}}\cdots\widetilde{X}_{i_{k}}%
\]
is \emph{$\nu$-homogeneous and left-invariant on $\mathbb{G}$}. Thus, assuming
from now on that
\begin{equation}
\nu<q\,\,(<Q) \label{eq:nulessq}%
\end{equation}
(recall that $Q$ is the homogeneous dimension of the group $\mathbb{G}$), we
may apply Theorem \ref{thm:PropertiesGammGroups}, which ensures that the
operator $\widetilde{\mathcal{L}}$ possesses a \emph{well-behaved global
fundamental solution} of the form
\begin{equation}
\widetilde{\Gamma}((x,\xi),(y,\eta))=\widetilde{\Gamma}_{0}((y,\eta)^{-1}%
\star(x,\xi)), \label{eq:widetildeGammaExist}%
\end{equation}
where $\widetilde{\Gamma}_{0}\in C^{\infty}(\mathbb{R}^{N}\setminus\{0\})$
satisfies the following properties:

\begin{enumerate}
\item[(a)] $\widetilde{\Gamma}_{0}$ is $D_{\lambda}$-homogeneous of degree
$\nu-Q$;

\item[(b)] for every $\varphi\in C_{0}^{\infty}(\mathbb{R}^{N})$ we have
\[
\int_{\mathbb{R}^{N}}\widetilde{\Gamma}_{0}(x,\xi) (\widetilde{\mathcal{L}%
})^{*}\varphi(x,\xi)\,dx\,d\xi= -\varphi(0).
\]

\end{enumerate}

Recalling that $\widetilde{\mathcal{L}}$ is a lifting of $\mathcal{L}$, see
\eqref{eq:LhtildeliftsLh}, it is then natural to expect that the ``integral
saturation''
\begin{equation}
\label{eq:ideaGammaGammatilde}\Gamma(x,y) = \int_{\mathbb{R}^{p}%
}\widetilde{\Gamma}((x,\xi),(y,0))\,d\xi= \int_{\mathbb{R}^{p}}%
\widetilde{\Gamma}_{0}((y,0)^{-1}\star(x,\xi))\,d\xi
\end{equation}
provides a \emph{global fundamental solution for $\mathcal{L}$} with pole at
$y$ (see \cite{BB-lift}). \vspace{0.1cm}

As a matter of fact, the above intuition can be made rigorous with the help of
\cite[Thm.\,2.1]{BB-lift}, which ensures that the function $\Gamma$ defined in
\eqref{eq:ideaGammaGammatilde} is \emph{in\-deed} a global fundamental
solution of $\mathcal{L}$, provided that the `lifted' operator
$\widetilde{\mathcal{L}}$ and its (global) fundamental solution
$\widetilde{\Gamma}$ satisfy the properties contained in the next two Propositions.

\begin{proposition}
[Saturable lifting]\label{prop:S1S2vere}The operator $\widetilde{\mathcal{L}}$
is a \emph{saturable lifting} of $\mathcal{L}$ in the sense of \cite[Def.\,2]%
{BB-lift}, that is, the following properties hold.

\begin{itemize}
\item[(S1)] Setting $\mathcal{R}=\widetilde{\mathcal{L}}-\mathcal{L}$, every
summand of the \emph{formal adjoint of $\mathcal{R}$} operates at least once
in the $\xi$ variables, that is, we have
\begin{equation}
\mathcal{R}^{\ast}=\sum_{\beta\neq0}r_{\alpha,\beta}(x,\xi)\partial
_{x}^{\alpha}\partial_{\xi}^{\beta}, \label{eq:RadjointEsplicitar}%
\end{equation}
for suitable smooth functions $r_{\alpha,\beta}$ (not all identically zero).

\item[(S2)] There exists a sequence $\{\theta_{j}\}_{j}\subseteq C_{0}%
^{\infty}(\mathbb{R}^{p})$ such that

\item[i)] $0\leq\theta_{j}\leq1$ on $\mathbb{R}^{p}$ for every $j\in
\mathbb{N}$;

\item[ii)] setting $\Omega_{j}=\{\xi\in\mathbb{R}^{p}:\,\theta_{j}(\xi)=1\}$,
we have
\[
\Omega_{j}\Subset\Omega_{j+1}\quad\text{and}\quad\bigcup_{j=1}^{+\infty}%
\Omega_{j}=\mathbb{R}^{p};
\]

\item[iii)] for every compact set $K\subseteq\mathbb{R}^{n}$ and for every
coefficient function $r_{\alpha,\beta}$ of $\mathcal{R}^{\ast}$ as in
\eqref{eq:RadjointEsplicitar} one can find a constant $C_{\alpha,\beta}(K)>0$
such that
\begin{equation}
\begin{gathered} \big|r_{\alpha,\beta}(x,\xi)\partial_\xi^\beta\theta_j(\xi)\big| \leq C_{\alpha,\beta}(K) \\[0.1cm] \text{for every $x\in K,\,\xi\in\mathbb{R}^p$ and $j\in\mathbb{N}$}. \end{gathered} \label{eq:assumptionS2estim}%
\end{equation}

\end{itemize}
\end{proposition}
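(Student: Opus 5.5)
Both (S1) and (S2) will be derived from the structure of the lifted vector fields given in Theorem \ref{thm:Adrift}. The key fact is \eqref{eq:tildeXiliftXi}, $\widetilde{X}_j=X_j+R_j$, where $R_j=\sum_{k=1}^p r_{k,j}(x,\xi)\partial_{\xi_k}$ acts only in $\xi$. We will use the homogeneity of $\widetilde X_j$ with respect to $D_\lambda=(\delta_\lambda,E_\lambda)$, together with the fact that $X_j$ does not depend on $\xi$. Together these force each coefficient $r_{k,j}$ to be a $D_\lambda$-homogeneous polynomial of degree $\tau_k-\nu_j$. As in Remark \ref{rem:ConseguenzeH1H2}, the ordering of the exponents $\tau_1\le\dots\le\tau_p$ then gives a pyramid shape: $r_{k,j}$ does not depend on $\xi_k,\dots,\xi_p$. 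Consequently $\partial_{\xi_k} r_{k,j}=0$, and $R_j^\ast=-R_j$.

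For (S1), expand the product $\widetilde X_{i_1}\cdots\widetilde X_{i_k}=(X_{i_1}+R_{i_1})\cdots(X_{i_k}+R_{i_k})$. The term with no $R$ factor is $X_I$, so $\mathcal R=\widetilde{\mathcal L}-\mathcal L$ is a linear combination of words in the $X$'s and $R$'s, each containing at least one $R$. Taking transposes, each word becomes $\pm$ the reversed word, because $X_i^\ast=-X_i$ and $R_j^\ast=-R_j$. We then move all derivatives to the right: write $R_j=\sum_k r_{k,j}\partial_{\xi_k}$ and commute. Commuting a derivative past a coefficient only produces lower-order terms. We must check that the $\partial_\xi$ survives in every term. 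The point is that in any word the last factor acting, or more carefully the rightmost $R$, contributes a $\partial_{\xi_k}$. Factors to its right are $X$'s or further $R$'s, and these are never differentiated away. Factors to its left only differentiate the coefficients $r_{k,j}$, and never remove a $\partial_{\xi}$ applied to the function. So after normal ordering every monomial has $\beta\neq0$. The claim that the coefficients are not all identically zero follows because $R_j\not\equiv0$ and the vector fields are linearly independent; in any case this nondegeneracy is not essential.

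For (S2), we choose $\theta_j(\xi)=\theta(E_{1/j}(\xi))$, where $\theta\in C_0^\infty(\mathbb R^p)$, $0\le\theta\le1$, and $\theta=1$ on the homogeneous ball $\{\|\xi\|\le1\}$ with support in $\{\|\xi\|\le 2\}$. Here $\|\cdot\|$ is the homogeneous norm of type \eqref{canonical homogeneous norm} relative to $E_\lambda$. Then $\Omega_j\supseteq\{\|\xi\|\le j\}$, and these sets are nested and exhaust $\mathbb R^p$. Choosing, say, $\theta_j=\theta\circ E_{1/2^j}$ gives compact inclusion. By homogeneity, $\partial_\xi^\beta\theta_j(\xi)=j^{-|\beta|_E}(\partial^\beta\theta)(E_{1/j}\xi)$, where $|\beta|_E=\sum\beta_k\tau_k$. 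This derivative is supported where $j\le\|\xi\|\le2j$.

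The main obstacle is the estimate \eqref{eq:assumptionS2estim}. Each $r_{\alpha,\beta}$ arising in the normal ordering is a polynomial in $(x,\xi)$, and it is $D_\lambda$-homogeneous of degree $|\beta|_E+|\alpha|_\delta-\nu$, where $|\alpha|_\delta$ is defined analogously with the $\sigma_i$. This holds because $\mathcal R^\ast$ is $\nu$-homogeneous. For $x\in K$ and $\|\xi\|\sim j\ge1$, a homogeneous polynomial of that degree is bounded by $C(1+\|\xi\|)^{|\beta|_E+|\alpha|_\delta-\nu}$. This alone gives the bound $j^{|\alpha|_\delta-\nu}$, which is not bounded in general. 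So the homogeneity count must be sharpened by tracking separately the $\xi$-degree of $r_{\alpha,\beta}$. Since $x$ ranges in a compact set, only the $\xi$-degree matters. We will therefore show that the $\xi$-degree of $r_{\alpha,\beta}$ is at most $|\beta|_E$. This is plausible because the $X$'s contain no $\xi$. The $\xi$-dependence enters only through the $r_{k,j}$, which have $E$-degree in $\xi$ at most $\tau_k-\nu_j<\tau_k$. Moreover, each differentiation of a coefficient by $\partial_{\xi_k}$ lowers its $\xi$-degree by $\tau_k$ exactly as it removes $\tau_k$ from the $\partial_\xi$ weight. We will verify this by induction on word length. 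With this bound, $|r_{\alpha,\beta}\partial^\beta_\xi\theta_j|\le C(K)j^{|\beta|_E}j^{-|\beta|_E}$, which is uniform in $j$, as required.
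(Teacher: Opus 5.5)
Your proposal is correct and follows essentially the paper's proof. The paper also shows, by induction on word length, that every coefficient of $\mathcal{R}^{\ast}$ has $\beta\neq0$ and the form $\sum_{\mathcal{H}_E(\gamma)\le\mathcal{H}_E(\beta)-1}c_{\alpha,\beta,\gamma}(x)\,\xi^{\gamma}$, and then takes $\theta_j=\Theta\circ E_{2^{-j}}$, using the scaling of $\partial_\xi^\beta\theta_j$ on the annulus $\{2^j\le\varrho_E\le2^{j+1}\}$. The differences are minor: the paper handles the transpose directly via $\widetilde{X}_i^{\ast}=-\widetilde{X}_i$ and $X_i^{\ast}=-X_i$, so it never needs $R_j^{\ast}=-R_j$ or an expansion into words in the $X$'s and $R$'s; and its strict bound $\mathcal{H}_E(\gamma)\le\mathcal{H}_E(\beta)-1$ gives decay $2^{-j}$, whereas your non-strict bound already yields the required uniform boundedness.
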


\begin{proposition}
[Integrability properties of $\widetilde{\Gamma}$]\label{prop:P1P2}Let
$\widetilde{\Gamma}$ be the fundamental soluti\-on of $\widetilde{\mathcal{L}%
}$ defined in \eqref{eq:widetildeGammaExist}. Then, the following properties hold:

\begin{itemize}
\item[(P1)] for every fixed $x\neq y\in\mathbb{R}^{n}$, we have
\[
\xi\mapsto\widetilde{\Gamma}((x,\xi),(y,0))=\widetilde{\Gamma}_{0}%
((y,0)^{-1}\star(x,\xi))\in L^{1}(\mathbb{R}^{p});
\]

\item[(P2)] for every $y\in\mathbb{R}^{n}$ and every compact set
$K\subseteq\mathbb{R}^{n}$, we have
\[
(x,\xi)\mapsto\widetilde{\Gamma}((x,\xi),(y,0))=\widetilde{\Gamma}%
_{0}((y,0)^{-1}\star(x,\xi))\in L^{1}(K\times\mathbb{R}^{p}).
\]

\end{itemize}
\end{proposition}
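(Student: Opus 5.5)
We prove (P1) and (P2) using only the homogeneity and the pointwise bound of $\widetilde{\Gamma}_0$ from Theorem \ref{thm:PropertiesGammGroups}, together with the changes of variable in Remark \ref{rem:CambiDiVariabile}. Fix a homogeneous norm on $\mathbb{G}$ that splits along $\mathbb{R}^N=\mathbb{R}^n\times\mathbb{R}^p$, say
\[
\|(x,\xi)\|=\|x\|_{\delta}+\|\xi\|_{E},
\]
where $\|x\|_\delta=\sum_i|x_i|^{1/\sigma_i}$ and $\|\xi\|_E=\sum_j|\xi_j|^{1/\tau_j}$. By \eqref{eq:GammazeroEstimate},
\[
|\widetilde{\Gamma}_0(z)|\le c\|z\|^{\nu-Q},
\]
so everything reduces to integrability of negative powers of this norm.

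\textbf{Proof of (P1).} Fix $x\neq y$. Apply the change of variable $\zeta=\Psi_{x,y}(\xi)$ of Remark \ref{rem:CambiDiVariabile}(1), which has unit Jacobian. This gives
\[
\int_{\mathbb{R}^p}|\widetilde{\Gamma}_0((y,0)^{-1}\star(x,\xi))|\,d\xi=\int_{\mathbb{R}^p}|\widetilde{\Gamma}_0(w(x,y),\zeta)|\,d\zeta .
\]
Here $w(x,y)=\pi_n((y,0)^{-1}\star(x,\xi))$. I expect $w$ not to depend on $\xi$, by the pyramid (triangular) structure of the group law of the lifting, in which the $x$-components only involve $x$-variables. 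Write $w=w(x,y)$; it is nonzero when $x\neq y$. The integrand is then bounded by
\[
c\,(\|w\|_\delta+\|\zeta\|_E)^{\nu-Q}.
\]
This is bounded near $\zeta=0$. At infinity it is integrable on $\mathbb{R}^p$ because $Q-\nu>Q-q=\mathcal{E}$, which is exactly the homogeneous dimension of $(\mathbb{R}^p,E_\lambda)$; here we use $\nu<q$. This gives (P1), and in fact the explicit bound
\[
\int_{\mathbb{R}^p}|\widetilde{\Gamma}_0(w,\zeta)|\,d\zeta\le C\|w\|_\delta^{\nu-q}.
\]
This bound follows by rescaling $\zeta=E_{\|w\|}(\zeta')$.

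\textbf{Proof of (P2).} By Tonelli, (P2) amounts to showing that
\[
x\mapsto\|w(x,y)\|_\delta^{\nu-q}
\]
is integrable on compact sets $K$. At this point the main obstacle appears. The map $x\mapsto w(x,y)$ is a polynomial diffeomorphism; for $y=0$ it is the identity. In general one can again use the group structure: the map $x\mapsto \pi_n((y,0)^{-1}\star(x,0))$ should have unit Jacobian, as a restriction of a triangular left translation. Granting this, we change variables $x\mapsto w$, the compact $K$ going to a compact set. We then need
\[
\|w\|_\delta^{\nu-q}\in L^1_{\mathrm{loc}}(\mathbb{R}^n),
\]
which holds because $\nu-q>-q$ and $q$ is the homogeneous dimension of $(\mathbb{R}^n,\delta_\lambda)$; this is the $\mathbb{R}^n$ analogue of Remark \ref{rem:HomNormLloc}.

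If the independence of $w$ from $\xi$ or the unit-Jacobian claim fails, there is an alternative route. Apply the change of variable $\Phi_{x,y}$ of Remark \ref{rem:CambiDiVariabile}(2) to rewrite the integrand as $\widetilde{\Gamma}_0((y,\zeta)^{-1}\star(x,0))$. Then integrate directly over a bounded set in $(x,\zeta)$ using local integrability of $\widetilde{\Gamma}_0$, and treat large $\zeta$ with the decay estimate above, uniformly in $x\in K$. Near-diagonal control for $x$ close to $y$ is precisely the delicate part, and I expect verifying the Jacobian and triangular-structure facts to be the core technical step.
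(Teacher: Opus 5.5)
Your main route rests on a false claim: $w=\pi_n\big((y,0)^{-1}\star(x,\xi)\big)$ does in general depend on $\xi$. Take $X_1=\partial_{x_1}$, $X_2=x_1\partial_{x_2}$ on $\mathbb{R}^2$, lifted to the Heisenberg group with law $(x,\xi)\star(x',\xi')=(x_1+x_1',\,x_2+x_2'+x_1\xi',\,\xi+\xi')$ and $\widetilde X_1=\partial_{x_1}$, $\widetilde X_2=x_1\partial_{x_2}+\partial_\xi$. Then $(y,0)^{-1}=(-y,0)$ and $\pi_n\big((y,0)^{-1}\star(x,\xi)\big)=(x_1-y_1,\,x_2-y_2-y_1\xi)$. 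A different lifting cannot avoid this. $\xi$-independence for all $x,y$ would make $\pi_n$ a group homomorphism onto a Lie group structure on $\mathbb{R}^n$ for which the $X_j$ are left-invariant, forcing $N=n$, contrary to $p\geq 1$.

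For (P1) the error is harmless. After $\zeta=\Psi_{x,y}(\xi)$, the $\xi$-component of $(y,0)^{-1}\star(x,\Psi_{x,y}^{-1}(\zeta))$ is exactly $\zeta$. So the integrand is at most $c\,\varrho_E(\zeta)^{\nu-Q}$ on $\{\varrho_E>1\}$, whatever the $x$-component does. On the compact set $\{\varrho_E\leq 1\}$ it is continuous, hence bounded, because $x\neq y$. This is exactly the paper's proof of (P1).

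For (P2), however, your rescaling $\zeta=E_{\|w\|}(\zeta')$ needs $w$ independent of $\zeta$. So the bound $\int_{\mathbb{R}^p}|\widetilde\Gamma_0|\,d\zeta\leq C\|w(x,y)\|_\delta^{\nu-q}$ is not established. With it, your Tonelli reduction of (P2) to the integrability of $x\mapsto\|w(x,y)\|_\delta^{\nu-q}$ on $K$ collapses.

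What is missing for (P2) is to treat the singularity in all $N$ variables jointly, rather than first integrating out $\xi$. Split in $\zeta$, as the paper does.
\begin{itemize}
\item On $K\times\{\varrho_E>1\}$, the bound $c\,\varrho_E(\zeta)^{\nu-Q}$ is uniform in $x$, so this piece is at most $c\,|K|\int_{\{\varrho_E>1\}}\varrho_E^{\nu-Q}<\infty$.
\item On $K\times\{\varrho_E\leq1\}$, undo the unit-Jacobian map $(x,\xi)\mapsto(x,\Psi_{x,y}(\xi))$ to get an integral over a compact set in $(x,\xi)$. Then substitute $(x,\xi)=(y,0)\star(a,\alpha)$ (Lebesgue measure is Haar). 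This gives $\int_{K''}\varrho_D^{\nu-Q}$ over a compact $K''\subset\mathbb{R}^N$, which is finite since $\nu-Q>-Q$ (Remark \ref{rem:HomNormLloc}).
\end{itemize}

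Your fallback points in this direction but is not yet a proof.
\begin{itemize}
\item You leave unverified that $(x,\zeta)\mapsto(y,\zeta)^{-1}\star(x,0)$ is measure preserving. It is, being the left translation by $(y,0)^{-1}$ composed with $(x,\zeta)\mapsto(x,\Phi_{x,y}(\zeta))$.
\item In the $\Phi$-coordinates, the $\xi$-component of $(y,\zeta)^{-1}\star(x,0)$ is no longer $\zeta$, so the ``decay estimate above'' does not apply as stated. You would need a quasi-triangle-inequality argument to get $\varrho_D\big((y,\zeta)^{-1}\star(x,0)\big)\geq c\,\varrho_E(\zeta)$ for large $\varrho_E(\zeta)$, uniformly in $x\in K$. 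The $\Psi$-coordinates make this trivial.
\end{itemize}
Finally, there is no separate near-diagonal difficulty. The singularity at $x=y$ is just the point singularity of $\widetilde\Gamma_0$ at $0\in\mathbb{R}^N$, absorbed by local integrability in all $N$ variables at once.
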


Before proceeding, we need to introduce the following notation, which will be
used in the following.

Suppose we are given a family of non-isotropic dilations of the form%
\[
d_{\lambda}:\mathbb{R}^{k}\rightarrow\mathbb{R}^{k},\qquad d_{\lambda
}(v)=(\lambda^{\varepsilon_{1}}v_{1},\ldots,\lambda^{\varepsilon_{k}}v_{k}),
\]
on some Euclidean space $\mathbb{R}^{k}$ (with $k\geq1$ and $1\leq
\varepsilon_{1}\leq\cdots\leq\varepsilon_{k}$). Then, for every multi-index
$\alpha=(\alpha_{1},\ldots,\alpha_{k})$ (with $\alpha_{i}$ nonnegative
integers), we set
\[
\mathcal{H}(\alpha)=\sum_{j=1}^{k}\alpha_{j}\qquad\text{and}\qquad
\mathcal{H}_{d}(\alpha)=\sum_{j=1}^{k}\alpha_{j}\varepsilon_{j}.
\]

Moreover, we denote by $\varrho_{d}$ the \emph{homogeneous norm} associated
with the family $\{d_{\lambda}\}_{\lambda>0}$, namely
\begin{equation}
\varrho_{d}(v)=\sum_{i=1}^{k}|v_{i}|^{1/\varepsilon_{i}}. \label{eq:varrhod}%
\end{equation}

We will make use of these notations for the three families of dilations that
natu\-rally arise from Assumption \textbf{(H1)} and Theorem \ref{thm:Adrift},
namely: \vspace{0.1cm}

i)\,\,the family $\{\delta_{\lambda}\}_{\lambda}$, which is defined on
$\mathbb{R}^{n}$; \vspace{0.05cm}

ii)\,\,the family $\{E_{\lambda}\}_{\lambda}$, which is defined on
$\mathbb{R}^{p}$; \vspace{0.05cm}

iii)\thinspace\thinspace the family $\{D_{\lambda}=(\delta_{\lambda
},E_{\lambda})\}_{\lambda}$, which is defined on $\mathbb{R}^{N}%
=\mathbb{R}^{n}\times\mathbb{R}^{p}$. \medskip

The reader will note that we prefer to use the symbol $\varrho_{d}$, different
from $\left\Vert \cdot\right\Vert $ which is used on homogeneous groups, to
avoid confusion.

\medskip

Let us now turn to prove Propositions \ref{prop:S1S2vere} and \ref{prop:P1P2}.
To this end, we begin by establishing the following technical lemma.

\begin{lemma}
\label{lem:AggiuntoOperaInxi} Setting $\mathcal{R}=\widetilde{\mathcal{L}%
}-\mathcal{L}$, we have
\begin{equation}
\mathcal{R}^{\ast}=(\widetilde{\mathcal{L}})^{\ast}-\mathcal{L}^{\ast}%
=\sum_{\beta\neq0}r_{\alpha,\beta}(x,\xi)\partial_{x}^{\alpha}\partial_{\xi
}^{\beta}, \label{eq:formRstarPerS1}%
\end{equation}
for some \emph{polynomial functions} $r_{\alpha,\beta}$, \emph{not all
identically zero} \emph{(}in other words, every summand of $\mathcal{R}^{*}$
operates at least once in the $\xi$ variables\emph{)}. Furthermore, for each
fixed $\alpha,\beta$, the polynomial $r_{\alpha,\beta}$ has the following
expli\-cit expression
\begin{equation}
r_{\alpha,\beta}(x,\xi)=\sum_{\mathcal{H}_{E}(\gamma)\leq m_{\beta}}%
c_{\alpha,\beta,\gamma}(x)\,\xi^{\gamma}, \label{eq:ralfabetaLemmaTecnico}%
\end{equation}
where $c_{\alpha,\beta,\gamma}$ are polynomial functions only depending on
$x$, and
\begin{equation}
m_{\beta}=\mathcal{H}_{E}(\beta)-1. \label{eq:defHbeta}%
\end{equation}

\end{lemma}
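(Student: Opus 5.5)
The plan is to expand $\widetilde{\mathcal{L}}$ explicitly through the decomposition $\widetilde{X}_j=X_j+R_j$ of Theorem \ref{thm:Adrift}, identify $\mathcal{R}$ as the sum of all the terms containing at least one $R_j$, and then read off the structure of $\mathcal{R}^{\ast}$ from homogeneity. As a first step we write
\[
R_j=\sum_{k=1}^{p} a_{k,j}(x,\xi)\,\partial_{\xi_k}.
\]
Since $\widetilde{X}_j$ is $D_\lambda$-homogeneous of degree $\nu_j$ and left-invariant on a homogeneous group, its coefficients are polynomials, exactly as in Remark \ref{rem:ConseguenzeH1H2}(1). Moreover $a_{k,j}$ is $D_\lambda$-homogeneous of degree $\tau_k-\nu_j\le\tau_k-1$. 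Hence $a_{k,j}$ is a sum of monomials $c(x)\xi^\gamma$ with $\mathcal{H}_E(\gamma)\le\tau_k-1$. By the same pyramid argument as in \eqref{eq:piramydshape}, applied to $D_\lambda$ with the ordering of the $\xi$-variables, $a_{k,j}$ does not depend on $\xi_k,\dots,\xi_p$. The same argument shows that $\widetilde{X}_j^{\ast}=-\widetilde{X}_j$ and $R_j^{\ast}=-R_j$, because $\partial_{\xi_k}a_{k,j}=0$.

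Next we expand each product $\widetilde{X}_{i_1}\cdots\widetilde{X}_{i_k}$. We get $X_I$ plus a sum of words in the $X$'s and $R$'s, each containing at least one $R$. Hence $\mathcal{R}=\sum_I c_I\sum(\text{mixed words})$, and
\[
\mathcal{R}^{\ast}=\sum_I c_I(-1)^{\ell(I)}\sum(\text{reversed mixed words}).
\]
This relies on the fact that $X_j$ and $R_j$ are both skew-adjoint. This skew-adjointness is the key point: taking the adjoint produces no zeroth-order leftover terms, and every word keeps at least one factor $R_j$. The remaining task is to put each word into normal form $\sum r_{\alpha,\beta}\partial_x^\alpha\partial_\xi^\beta$ and to show that every term has $\beta\neq0$. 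This is the main obstacle. When derivatives $\partial_\xi$ coming from an $R$ hit coefficients of later factors, derivatives disappear, and when $X$-derivatives hit the $a_{k,j}$, $\xi$-derivatives could be lost.

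To handle this obstacle I would use homogeneity rather than tracking commutators. The words are $D_\lambda$-homogeneous of degree $\nu$ and do not annihilate functions of $x$ alone? No: this is the wrong direction. The correct observation is that $\mathcal{R}(u\circ\pi_n)=0$ for all $u$, by \eqref{eq:LhtildeliftsLh}, so $\mathcal{R}$ has no pure $\partial_x$ terms. For the adjoint we argue as follows. The coefficients $a_{k,j}$ do not depend on $\xi_k$, and, more strongly, each coefficient of the final normal form of $\mathcal{R}$ in front of $\partial_\xi^\beta$ comes with $\beta\ne0$. When we pass to the adjoint, $\mathcal{R}^\ast v=\sum(-1)^{|\alpha|+|\beta|}\partial_x^\alpha\partial_\xi^\beta(r v)$, Leibniz terms in which all $\xi$-derivatives fall on $r$ could in principle occur. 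To rule them out, I would instead compute $\mathcal{R}^{\ast}=\widetilde{\mathcal{L}}^{\ast}-\mathcal{L}^{\ast}$ directly. Here $\widetilde{\mathcal{L}}^{\ast}=\sum c_I(-1)^{\ell(I)}\widetilde{X}_{I^{\rm rev}}$ is again a lifting of $\mathcal{L}^\ast=\sum c_I(-1)^{\ell(I)}X_{I^{\rm rev}}$ in the sense of \eqref{eq:LhtildeliftsLh}. Therefore $\mathcal{R}^\ast$ annihilates every function of $x$ alone. An operator in normal form with this property has all its $\beta=0$ coefficients vanishing: test on the monomials $x^\alpha$ and argue by induction on $|\alpha|$. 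Finally, the coefficients are not all zero, because $R_j\neq0$ forces $\mathcal{R}\ne0$; granting that $\widetilde{\mathcal{L}}\ne\mathcal{L}$, the same then holds for $\mathcal{R}^\ast$.

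For \eqref{eq:ralfabetaLemmaTecnico} we use homogeneity. $\mathcal{R}^\ast$ is $D_\lambda$-homogeneous of degree $\nu$, so $r_{\alpha,\beta}$ is $D_\lambda$-homogeneous of degree $\mathcal{H}_\delta(\alpha)+\mathcal{H}_E(\beta)-\nu$. Writing $r_{\alpha,\beta}=\sum_\gamma c_{\alpha,\beta,\gamma}(x)\xi^\gamma$ with $c_{\alpha,\beta,\gamma}$ polynomial and $\delta_\lambda$-homogeneous of nonnegative degree, we obtain
\[
\mathcal{H}_E(\gamma)\le \mathcal{H}_\delta(\alpha)+\mathcal{H}_E(\beta)-\nu.
\]
The bound $\mathcal{H}_E(\gamma)\le\mathcal{H}_E(\beta)-1$ does not follow from this inequality alone. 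I would instead obtain it from the structure of the words: each factor $\partial_{\xi_k}$ carries a coefficient of $E$-degree at most $\tau_k-1$, while $X_j$ and $\partial_x$ do not raise the $\xi$-degree. This gives the bound on the $\xi$-degree against the remaining $\xi$-derivatives. I expect this last bookkeeping step to be the delicate one.
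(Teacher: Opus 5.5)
Your argument for the first claim ($\beta\neq0$) is correct, and it differs from the paper's. Since $\widetilde{X}_j^{\ast}=-\widetilde{X}_j$, the operator $(\widetilde{\mathcal{L}})^{\ast}$ lifts $\mathcal{L}^{\ast}$. Hence $\mathcal{R}^{\ast}$ annihilates every function of $x$ alone, and testing on monomials forces $r_{\alpha,0}\equiv0$. The worry that led you there is unfounded, though. An $X_j$-derivative falling on a coefficient never removes a $\partial_\xi$. Moreover, in any word the right-most $R$-factor has only $\xi$-independent coefficients to its right, so its $\partial_{\xi_k}$ always lands on the function.

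The genuine gap is the second, quantitative claim $\mathcal{H}_E(\gamma)\le\mathcal{H}_E(\beta)-1$, which is the part actually used in (S2). Call the $\xi$-weight of a polynomial the largest $\mathcal{H}_E(\gamma)$ over its monomials $c(x)\xi^\gamma$. You rightly note that homogeneity only gives $\mathcal{H}_E(\gamma)\le\mathcal{H}_\delta(\alpha)+\mathcal{H}_E(\beta)-\nu$. You then leave the bookkeeping undone. Your heuristic (each $\partial_{\xi_k}$ carries a coefficient of $\xi$-weight at most $\tau_k-1$; $X_j$ and $\partial_x$ do not raise the $\xi$-weight) only covers terms in which every $\partial_{\xi_k}$ lands on the function. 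The missing observation concerns a $\partial_{\xi_k}$ that differentiates a coefficient further to the right. Then $\beta$ does not grow, but $\partial_{\xi_k}\xi^{\gamma'}$ has weight $\mathcal{H}_E(\gamma')-\tau_k$. This drop more than compensates the weight $\le\tau_k-1$ of $a_{k,j}$.

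The paper makes this rigorous by induction on the length of the word. It multiplies each new factor on the left of an operator already in normal form $\sum_{\beta\neq0}r_{\alpha,\beta}\partial_x^\alpha\partial_\xi^\beta$ satisfying the weight bound, and checks three cases:
\begin{itemize}
\item $X_j$ produces $(X_jr_{\alpha,\beta})\partial_x^\alpha\partial_\xi^\beta$ and $r_{\alpha,\beta}\,p_{i,j}(x)\,\partial_x^{\alpha+e_i}\partial_\xi^\beta$. Neither changes $\beta$ or the $\xi$-weights.
\item $R_jX_{i_k}\cdots X_{i_1}$ has $\beta=e_k$ and coefficients of weight $\le\tau_k-1$.
\item $R_j$ applied to $r_{\alpha,\beta}\partial_x^\alpha\partial_\xi^\beta$ gives $a_{k,j}(\partial_{\xi_k}r_{\alpha,\beta})\partial_x^\alpha\partial_\xi^\beta$, of weight $\le(\tau_k-1)+(\mathcal{H}_E(\beta)-1)-\tau_k$. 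It also gives $a_{k,j}r_{\alpha,\beta}\partial_x^\alpha\partial_\xi^{\beta+e_k}$, of weight $\le\mathcal{H}_E(\beta+e_k)-2$.
\end{itemize}
In your word picture, this says that $\mathcal{H}_E(\beta)-\mathcal{H}_E(\gamma)$ is at least the number of $R$-factors. That invariant gives $\mathcal{H}_E(\beta)\ge1$, hence $\beta\neq0$, for free, so your lifting argument, though valid, becomes unnecessary.

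Finally, you only grant $\widetilde{\mathcal{L}}\neq\mathcal{L}$. This follows from the hypoellipticity of $\widetilde{\mathcal{L}}$: $\mathcal{L}$ annihilates $1\otimes v$ for every $v\in\mathcal{D}'(\mathbb{R}^p)$, including non-smooth $v$. Hence $\mathcal{R}\neq0$, and therefore $\mathcal{R}^{\ast}\neq0$.
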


\begin{proof}
First of all, we observe that, by definition,
\begin{align*}
\mathcal{R}^{*}  &  = \sum_{|I| = \nu}c_{I}\big(\widetilde{X}_{I}^{*}%
-X_{I}^{*}\big)\\
&  = \sum
_{\begin{subarray}{c} I = (i_1,\ldots,i_k) \\ \nu_{i_1}+\cdots+\nu_{i_k} = \nu \end{subarray}}%
(-1)^{k}c_{I}\cdot\big(\widetilde{X}_{i_{k}}\cdots\widetilde{X}_{i_{1}}-
X_{i_{k}}\cdots X_{i_{1}}\big);
\end{align*}
as a consequence, to prove the lemma it suffices to show that, \emph{given any
multi-in\-dex $I = (i_{1},\ldots,i_{k})\in\{1,\ldots,m\}^{k}$} (for some
$k\geq1$, and not necessarily satisfy\-ing the condition $|I| = \nu$), we
have
\begin{equation}
\label{eq:ToProveInductionRstarOperatoresxi}\begin{gathered} (\bigstar)\qquad\quad\widetilde{X}_{i_k}\cdots \widetilde{X}_{i_1}- X_{i_k}\cdots X_{i_1} = \sum\nolimits_{\beta\neq 0}r_{\alpha,\beta}(x,\xi)\partial^\alpha_x \partial^\beta_\xi, \\ \begin{array}{c} \text{where each $r_{\alpha,\beta}$ is a polynomial function of the form \eqref{eq:ralfabetaLemmaTecnico}} \\ \text{(possibly depending on $I$), and not all of them vanish.} \end{array} \end{gathered}
\end{equation}
Hence, we turn to proving \eqref{eq:ToProveInductionRstarOperatoresxi} by
\emph{induction on $k$}. \vspace{0.1cm}

If $k = 1$ (that is, $I = (i)$ for $1\leq i\leq m$), then by Theorem
\ref{thm:Adrift} we have
\begin{align*}
\widetilde{X}_{i} - X_{i} = R_{i}(x,\xi),
\end{align*}
where $R_{i}(x,\xi)$ is a \emph{non-vanishing} smooth vector field acting only
on the variables $\xi\in\mathbb{R}^{p}$, with coefficients possibly depending
on $(x,\xi)$, and $D_{\lambda}$-homogeneous of degree $\nu_{i}$ (as the same
is true of $X_{i}$ and $\widetilde{X}_{i}$). Thus,
\begin{equation}
\label{eq:explicitRijStep1}\widetilde{X}_{i}-X_{i} = \sum_{j = 1}^{p}
r_{i,j}(x,\xi)\,\partial_{\xi_{j}} \equiv\sum_{\mathcal{H}(\beta) = 1}%
r_{\beta}(x,\xi) \partial^{\beta}_{\xi},
\end{equation}
where each $r_{i,j} = r_{\beta}$ (with $\beta= e_{j}$ for some $1\leq j\leq
p$) is a polynomial function, $D_{\lambda}$-homoge\-ne\-ous of degree
\[
\tau_{j}-\nu_{i} = \mathcal{H}_{E}(e_{j})-\nu_{i}%
\]
(see Remark \ref{rem:ConseguenzeH1H2}). Hence, $(\bigstar)$ holds in this case
(note that the polynomials $r_{i,j}$ are not all identically zero, since
$R_{i}$ is \emph{non-vanishing}).

As for the \emph{explicit form} of $r_{i,j} = r_{\beta}$ it suffices to
observe that, since $r_{i,j}$ is (smooth and) $D_{\lambda}$-homoge\-ne\-ous of
degree $\mathcal{H}_{E}(e_{j})-\nu_{i}$, we have
\begin{equation}
\label{eq:formrijStep1}%
\begin{split}
r_{i,j}(x,\xi)  &  = \sum_{\mathcal{H}_{\delta}(\gamma_{1})+\mathcal{H}%
_{E}(\gamma_{2}) = \tau_{j}-\nu_{i}}c_{i,j,\gamma_{1},\gamma_{2}}x^{\gamma
_{1}}\xi^{\gamma_{2}}\\
&  = \sum_{\mathcal{H}_{E}(\gamma_{2})\leq\tau_{j}-\nu_{i}} \left(
\sum_{\mathcal{H}_{\delta}(\gamma_{1})+\mathcal{H}_{E}(\gamma_{2}) = \tau
_{j}-\nu_{i}} c_{i,j,\gamma_{1},\gamma_{2}}x^{\gamma_{1}}\right)  \xi
^{\gamma_{2}}\\[0.1cm]
&  (\text{since $\tau_{j} = \mathcal{H}_{E}(e_{j})$ and $\nu_{i}\geq1$, so
that $\tau_{j}-\nu_{i}\leq m_{e_{j}}$})\\
&  \equiv\sum_{\mathcal{H}_{E}(\gamma)\leq m_{e_{j}}}c_{i,j,\gamma}%
(x)\,\xi^{\gamma},
\end{split}
\end{equation}
and this proves \eqref{eq:ralfabetaLemmaTecnico} in this case.\vspace{0.1cm}

We now assume that \eqref{eq:ToProveInductionRstarOperatoresxi} holds for some
$k\geq1$ (and for every multi-in\-dex of length $k$), and we prove that it
holds also \emph{for every given multi-index}
\[
I=(i_{1},\ldots,i_{k},i_{k+1})
\]
of length $k+1$. To this end we first observe that, setting $J=(i_{1}%
,\ldots,i_{k}),$ by using the inductive hypothesis, jointly with Theorem
\ref{thm:Adrift}, we can write
\[%
\begin{split}
&  \widetilde{X}_{i_{k+1}}\cdots\widetilde{X}_{i_{1}}-X_{i_{k+1}}\cdots
X_{i_{1}}\\
&  \qquad=\widetilde{X}_{i_{k+1}}\big(\widetilde{X}_{i_{k}}\cdots
\widetilde{X}_{i_{1}}\big)-X_{i_{k+1}}X_{i_{k}}\cdots X_{i_{1}}\\
&  \qquad=(X_{i_{k+1}}+R_{i_{k+1}})\Big(X_{i_{k}}\cdots X_{i_{1}}%
+\sum\nolimits_{\beta\neq0}r_{\alpha,\beta}(x,\xi) \partial_{x}^{\alpha
}\partial_{\xi}^{\beta}\Big)\\
&  \qquad\qquad-X_{i_{k+1}}X_{i_{k}}\cdots X_{i_{1}}\\
&  \qquad=X_{i_{k+1}}\Big(\sum\nolimits_{\beta\neq0}r_{\alpha,\beta}%
(x,\xi)\partial_{x}^{\alpha}\partial_{\xi}^{\beta}\Big)+R_{i_{k+1}}X_{i_{k}%
}\cdots X_{i_{1}}\\
&  \qquad\qquad+R_{i_{k+1}}\Big(\sum\nolimits_{\beta\neq0}r_{\alpha,\beta
}(x,\xi)\partial_{x}^{\alpha}\partial_{\xi}^{\beta}\Big)
\end{split}
\]
(where each $r_{\alpha,\beta}$ is a polynomial function of the form
\eqref{eq:ralfabetaLemmaTecnico}, possibly depending on $J$, and not all of
them vanish). As a consequence, if we define
\begin{align*}
\bullet)  &  \,\,\mathcal{A}_{1}=X_{i_{k+1}}\Big(\sum\nolimits_{\beta\neq
0}r_{\alpha,\beta}(x,\xi) \partial_{x}^{\alpha} \partial_{\xi}^{\beta}\Big);\\
\bullet)  &  \,\,\mathcal{A}_{2}=R_{i_{k+1}}X_{i_{k}}\cdots X_{i_{1}};\\
\bullet)  &  \,\,\mathcal{A}_{3} = R_{i_{k+1}}\Big(\sum\nolimits_{\beta\neq
0}r_{\alpha,\beta}(x,\xi) \partial_{x}^{\alpha}\partial_{\xi}^{\beta}\Big);
\end{align*}
we see that \eqref{eq:ToProveInductionRstarOperatoresxi} holds (for this given
multi-index $I$ of length $k+1$) if it holds \emph{for the three operators
$\mathcal{A}_{1}, \mathcal{A}_{2}$ and $\mathcal{A}_{3}$}. \medskip

\noindent-\,\,\emph{Analysis of $\mathcal{A}_{1}$.} We begin by noting that,
if we write
\[
X_{i_{k+1}} = \sum_{j = 1}^{n} a_{i_{k+1},j}(x)\partial_{x_{j}}%
\]
(for suitable polynomial functions $a_{i_{k+1},j}$, $\delta_{\lambda}%
$-homogeneous of degree $\sigma_{j}-\nu_{i_{k+1}}$, see again Remark
\ref{rem:ConseguenzeH1H2}), by definition of $\mathcal{A}_{1}$ we have
\begin{align*}
&  \mathcal{A}_{1} = \sum_{\beta\neq0}\Big(\sum_{j = 1}^{n}a_{i_{k+1}%
,j}(x)\cdot(\partial_{x_{j}}r_{\alpha,\beta})(x,\xi)\Big)\partial^{\alpha}_{x}
\partial^{\beta}_{\xi}\\
&  \qquad\qquad+ \sum_{\beta\neq0}\sum_{j = 1}^{n} a_{i_{k+1},j}(x)\cdot
r_{\alpha,\beta}(x,\xi)\,\partial^{\alpha+e_{j}}_{x} \partial^{\beta}_{\xi}\\
&  \qquad(\text{setting $\alpha^{\prime}= \alpha+e_{j}$})\\
&  \qquad\equiv\sum_{\beta\neq0}\mathfrak{a}_{\alpha,\beta}(x,\xi)\,D^{\alpha
}_{x} D^{\beta}_{\xi}+ \sum_{\beta\neq0}\mathfrak{b}_{\alpha^{\prime},\beta
}(x,\xi)\,\partial^{\alpha^{\prime}}_{x} \partial^{\beta}_{\xi},
\end{align*}
where $\mathfrak{a}_{\alpha,\beta},\,\mathfrak{b}_{\alpha^{\prime},\beta}$ are
polynomial functions, not all identically zero (notice that the coefficients
$a_{i_{k+1},j}$ are not all identically zero since $X_{i_{k+1}}$ is a
non-va\-ni\-shing vector field, while the functions $r_{\alpha,\beta}$ are not
all identically zero by the inductive hypothesis). Hence, $(\bigstar)$ holds
for $\mathcal{A}_{1}$ \medskip

As for the \emph{explicit form} of $\mathfrak{a}_{\alpha,\beta}$ and
$\mathfrak{b}_{\alpha^{\prime},\beta}$ it suffices to note that, since
$r_{\alpha,\beta}$ is of the form \eqref{eq:ralfabetaLemmaTecnico} (by the
inductive hypothesis), we can write
\begin{align*}
\mathfrak{a}_{\alpha,\beta}(x,\xi)  &  = \sum_{j = 1}^{n} a_{i_{k+1}%
,j}(x)\cdot\Big[\partial_{x_{j}}\Big(\sum_{\mathcal{H}_{E}(\gamma)\leq
m_{\beta}}c_{\alpha,\beta,\gamma}(x)\,\xi^{\gamma}\Big)\Big]\\
&  = \sum_{\mathcal{H}_{E}(\gamma)\leq m_{\beta}}\Big(\sum_{j = 1}^{n}
a_{i_{k+1},j}(x)\cdot(\partial_{x_{j}}c_{\alpha,\beta,\gamma})(x)\Big)\xi
^{\gamma}\\
&  \equiv\sum_{\mathcal{H}_{E}(\gamma)\leq m_{\beta}}\mathfrak{p}%
_{\alpha,\beta,\gamma}(x)\,\xi^{\gamma};\\[0.1cm]
\mathfrak{b}_{\alpha^{\prime},\beta}(x,\xi)  &  = a_{i_{k+1},j}(x)\cdot
r_{\alpha,\beta}(x,\xi)\\
&  = \sum_{\mathcal{H}_{E}(\gamma)\leq m_{\beta}} \big(a_{i_{k+1},j}(x)\cdot
c_{\alpha,\beta,\gamma}(x)\big)\xi^{\gamma}\\
&  \equiv\sum_{\mathcal{H}_{E}(\gamma)\leq m_{\beta}}{\mathfrak{q}}%
_{\alpha^{\prime},\beta,\gamma}(x)\,\xi^{\gamma}%
\end{align*}
(for suitable polynomial functions $\mathfrak{p}_{\alpha,\beta,\gamma
},\,\mathfrak{q}_{\alpha^{\prime},\beta,\gamma}$ only depending on $x$), and
this fully proves the validity of \eqref{eq:ToProveInductionRstarOperatoresxi}
for the operator $\mathcal{A}_{1}$. \medskip

\noindent-\,\,\emph{Analysis of $\mathcal{A}_{2}$.} We first observe that,
since each $X_{i}$ has polynomial coeffi\-cient functions (see Remark
\ref{rem:ConseguenzeH1H2}), we can write
\[
X_{i_{k}}\cdots X_{i_{1}} = \sum_{\alpha} p_{\alpha}(x)\,\partial^{\alpha}%
_{x},
\]
for suitable polynomials $p_{\alpha}$, not all identically zero. From this, by
the explicit expression of $R_{i_{k+1}}$ (see \eqref{eq:explicitRijStep1} with
$i_{k+1}$ in place of $i$), we obtain
\begin{align*}
R_{i_{k+1}}X_{i_{k}}\cdots X_{i_{1}}  &  = \sum_{j = 1}^{p} r_{i_{k+1}%
,j}(x,\xi)\,\partial_{\xi_{j}} \left(  \sum_{\alpha} p_{\alpha} (x)\,\partial
^{\alpha}_{x}\right) \\
&  = \sum_{\alpha}\sum_{j = 1}^{p} \big(r_{i_{k+1},j}(x,\xi)\cdot p_{\alpha
}(x)\big)\partial^{\alpha}_{x}\partial_{\xi_{j}}\\
&  \equiv\sum_{\mathcal{H}(\beta) = 1}\mathfrak{c}_{\alpha,\beta}%
(x,\xi)\,\partial^{\alpha}_{x} \partial^{\beta}_{\xi},
\end{align*}
where each $\mathfrak{c}_{\alpha,\beta}$ (with $\beta= e_{j}$ for some $1\leq
j\leq p$) is a non-zero polyno\-mial function (note that the coefficients
$r_{i_{k+1},j}$ are not all identically zero, since $R_{i_{k+1}}$ is a
non-vanishing vector field). Hence, $(\bigstar)$ holds also for $\mathcal{A}%
_{2}$

As for the \emph{explicit form} of $\mathfrak{c}_{\alpha,\beta}$ it suffices
to observe that, if $\beta= e_{j}$ for some index $1\leq j\leq p$, by
exploiting \eqref{eq:formrijStep1} (with $i = i_{k+1}$) we can write
\begin{align*}
\mathfrak{c}_{\alpha,\beta}(x,\xi)  &  = r_{i_{k+1},j}(x,\xi)\cdot p_{\alpha
}(x) = \sum_{\mathcal{H}_{E}(\gamma)\leq m_{e_{j}}}\big(c_{i_{k+1},j,\gamma
}(x)\cdot p_{\alpha}(x)\big)\xi^{\gamma},
\end{align*}
and this fully establishes \eqref{eq:ToProveInductionRstarOperatoresxi} for
$\mathcal{A}_{2}$ (since $\beta= e_{j}$). \medskip

\noindent-\,\,\emph{Analysis of $\mathcal{A}_{3}$}. First of all we observe
that, by using once again the explicit expression of $R_{i_{k+1}}$ (see
\eqref{eq:explicitRijStep1} with $i_{k+1}$ in place of $i$), we have
\begin{align*}
&  \mathcal{A}_{3} = \sum_{\beta\neq0}\Big(\sum_{j = 1}^{p}r_{i_{k+1},j}%
(x,\xi)\cdot(\partial_{\xi_{j}}r_{\alpha,\beta})(x,\xi)\Big)\partial^{\alpha
}_{x} \partial^{\beta}_{\xi}\\
&  \qquad\qquad+ \sum_{\beta\neq0}\sum_{j = 1}^{p} r_{i_{k+1},j}(x,\xi)\cdot
r_{\alpha,\beta}(x,\xi)\,\partial^{\alpha}_{x} \partial^{\beta+e_{j}}_{\xi}\\
&  \qquad(\text{setting $\beta^{\prime}= \beta+e_{j}$, so that $\mathcal{H}%
(\beta^{\prime})\geq2$})\\
&  \qquad\equiv\sum_{\beta\neq0}\mathfrak{f}_{\alpha,\beta}(x,\xi
)\,\partial^{\alpha}_{x} \partial^{\beta}_{\xi}+ \sum_{\mathcal{H}%
(\beta^{\prime})\geq2}\mathfrak{h}_{\alpha,\beta^{\prime}}(x,\xi
)\,\partial^{\alpha}_{x} \partial^{\beta^{\prime}}_{\xi},
\end{align*}
where $\mathfrak{f}_{\alpha,\beta},\,\mathfrak{h}_{\alpha,\beta^{\prime}}$ are
polynomial functions, not all identically zero (notice that while the
coefficients $r_{i_{k+1},j}$ are not all identically zero since $R_{i_{k+1}}$
is a non-va\-ni\-shing vector field, while the functions $r_{\alpha,\beta}$
are not all identically zero by the inductive hypothesis). Hence, $(\bigstar)$
holds for $\mathcal{A}_{3}$. \medskip

As for the \emph{explicit form} of $\mathfrak{f}_{\alpha,\beta}$ and
$\mathfrak{h}_{\alpha,\beta^{\prime}}$, we begin by noting that, since
$r_{\alpha,\beta}$ is of the form \eqref{eq:ralfabetaLemmaTecnico} (by the
inductive hypothesis), again by using \eqref{eq:formrijStep1} (with the choice
$i = i_{k+1}$) we can write
\begin{align*}
&  \mathfrak{f}_{\alpha,\beta}(x,\xi)\\
&  \quad= \sum_{j = 1}^{p} \Big(\sum_{\mathcal{H}_{E}(\gamma_{1})\leq
m_{e_{j}}} c_{i_{k+1},j,\gamma_{1}}(x)\,\xi^{\gamma_{1}}\Big)\cdot
\Big[\partial_{\xi_{j}}\Big(\sum_{\mathcal{H}_{E}(\gamma_{2})\leq m_{\beta}%
}c_{\alpha,\beta,\gamma_{2}}(x)\,\xi^{\gamma_{2}}\Big)\Big]\\
&  \quad= \sum_{j = 1}^{p}\sum
_{\begin{subarray}{c} \mathcal{H}_{E}(\gamma_1)\leq m_{e_j} \\ \mathcal{H}_E(\gamma_2)\leq m_\beta \end{subarray} }%
\Big( (\gamma_{2})_{j}\cdot c_{i_{k+1},j,\gamma_{1}}(x)\cdot c_{\alpha
,\beta,\gamma_{2}}(x)\Big)\xi^{\gamma_{1}+\gamma_{2}-e_{j}};
\end{align*}
on the other hand, given any $1\leq j\leq p$ and any pair of multi-indexes
$\gamma_{1},\gamma_{2}$ satisfying $\mathcal{H}_{E}(\gamma_{1})\leq m_{e_{j}%
},\,\mathcal{H}_{E}(\gamma_{2})\leq m_{\beta}$, we have

\begin{itemize}
\item[i)] the function $x\mapsto(\gamma_{2})_{j}\cdot c_{i_{k+1},j,\gamma_{1}%
}(x)\cdot c_{\alpha,\beta,\gamma_{2}}(x)$ is a polynomial function only
depending on $x$;

\item[ii)] $\mathcal{H}_{E}(\gamma_{1}+\gamma_{2}-e_{j}) \leq m_{e_{j}%
}+m_{\beta}-\mathcal{H}_{E}(e_{j}) = m_{\beta}-1\leq m_{\beta}$
\end{itemize}

(where we have used the fact that $\mathcal{H}_{E}(\gamma_{1})\leq m_{e_{j}%
},\,\mathcal{H}_{E}(\gamma_{2})\leq m_{\beta}$, jointly with the defini\-tion
\eqref{eq:defHbeta}), and this proves that $\mathfrak{f}_{\alpha,\beta}$ is of
the form \eqref{eq:ralfabetaLemmaTecnico}. \vspace{0.1cm}

Similarly, given any multi-index $\beta^{\prime}$ with $\beta^{\prime}=
\beta+e_{j}$ (for some $1\leq j\leq p$), by exploiting \eqref{eq:formrijStep1}
and the inductive hypothesis we can write
\begin{align*}
&  \mathfrak{h}_{\alpha,\beta^{\prime}}(x,\xi)\\
&  \quad= \sum_{j = 1}^{p} \Big(\sum_{\mathcal{H}_{E}(\gamma_{1})\leq
m_{e_{j}}} c_{i_{k+1},j,\gamma_{1}}(x)\,\xi^{\gamma_{1}}\Big)\cdot
\Big(\sum_{\mathcal{H}_{E}(\gamma_{2})\leq m_{\beta}}c_{\alpha,\beta
,\gamma_{2}}(x)\,\xi^{\gamma_{2}}\Big)\\
&  \quad= \sum_{j = 1}^{p}\sum
_{\begin{subarray}{c} \mathcal{H}_{E}(\gamma_1)\leq m_{e_j} \\ \mathcal{H}_E(\gamma_2)\leq m_\beta \end{subarray} }%
\Big(c_{i_{k+1},j,\gamma_{1}}(x)\cdot c_{\alpha,\beta,\gamma_{2}}%
(x)\Big)\xi^{\gamma_{1}+\gamma_{2}};
\end{align*}
on the other hand, given any $1\leq j\leq p$ and any pair of multi-indexes
$\gamma_{1},\gamma_{2}$ satisfying $\mathcal{H}_{E}(\gamma_{1})\leq m_{e_{j}%
},\,\mathcal{H}_{E}(\gamma_{2})\leq m_{\beta}$, we see that

\begin{itemize}
\item[i)] the function $x\mapsto c_{i_{k+1},j,\gamma_{1}}(x)\cdot
c_{\alpha,\beta,\gamma_{2}}(x)$ is a polynomial function, which only depends
on $x$;

\item[ii)] $\mathcal{H}_{E}(\gamma_{1}+\gamma_{2}) \leq m_{e_{j}}+m_{\beta} =
\mathcal{H}_{E}(\beta+e_{j}) -2 = \mathcal{H}_{E}(\beta^{\prime})-2 \leq
m_{\beta^{\prime}}$
\end{itemize}

(where we have used the fact that $\mathcal{H}_{E}(\gamma_{1})\leq m_{e_{j}%
},\,\mathcal{H}_{E}(\gamma_{2})\leq m_{\beta}$, jointly with the relation
$\beta^{\prime}= \beta+e_{j}$ and the definition \eqref{eq:defHbeta}), and
this proves that also the polynomial $\mathfrak{h}_{\alpha,\beta^{\prime}}$ is
of the form \eqref{eq:ralfabetaLemmaTecnico}. \vspace{0.1cm}

Gathering these facts, we conclude that the claimed
\eqref{eq:ToProveInductionRstarOperatoresxi} also holds for the operator
$\mathcal{A}_{3}$, and the proof of the lemma is finally complete.
\end{proof}

\medskip

\begin{proof}
[Proof of Proposition \ref{prop:S1S2vere}]The validity of property (S1) is
\emph{precisely} the first part in the statement of Lemma
\ref{lem:AggiuntoOperaInxi} (see \eqref{eq:formRstarPerS1}). As for the
validity of (S2), instead, it can be proved by \emph{following exactly} the
same lines as the proof of \cite[Thm.\,4.3]{BB-lift}; we present it here with
all the details for the sake of completeness. \vspace{0.1cm}

To begin with, we consider the continuous homogeneous norm $\varrho_{E}$
introduced in \eqref{eq:varrhod} and associated with the family $\{E_{\lambda
}\}_{\lambda}$, that is%
\[
\varrho_{E}(\xi)=\sum_{j=1}^{p}|\xi_{j}|^{1/\tau_{j}}%
\]
(where the $\tau_{j}$'s are the exponents of $E_{\lambda}$, see
\eqref{eq:Dlambdalift}); accordingly, we choose a smooth function $\Theta\in
C_{0}^{\infty}(\mathbb{R}^{p})$ such that \medskip

a)\,\,$0\leq\Theta\leq1$ on $\mathbb{R}^{p}$; \vspace{0.05cm}

b)\thinspace\thinspace setting $\mathcal{O}_{r}=\{\xi\in\mathbb{R}%
^{p}:\,\varrho_{E}(\xi)<r\}$ (with $r>0$), we have
\[
\text{$\Theta\equiv1$ on $\mathcal{O}_{1}$ \quad and \quad$\Theta\equiv0$ on
$\mathbb{R}^{p}\setminus\mathcal{O}_{2}$.}%
\]
We then define a sequence $\{\theta_{j}\}_{j}\subseteq C_{0}^{\infty
}(\mathbb{R}^{p})$ by setting
\[
\theta_{j}(\xi)=\Theta\big(E_{2^{-j}}(\xi)\big)\qquad(\text{with $\xi
\in\mathbb{R}^{p},\,j\in\mathbb{N}$})
\]
and we turn to prove that this sequence satisfies i)-iii) in property (S2).
\vspace{0.1cm}

As for i) and ii) we first notice that, by definition of $\theta_{j}$ (and
owing to property a) of $\Theta$), we clearly have $0\leq\theta_{j}\leq1$ on
$\mathbb{R}^{p}$; moreover, using the $E_{\lambda}$-homo\-geneity of the
function $\varrho_{E}$, together with property b) of $\Theta$, we also have
\[
\mathcal{O}_{2^{j}}\Subset\Omega_{j}=\{\xi\in\mathbb{R}^{p}:\,\theta_{j}%
(\xi)=1\}\subseteq\mathcal{O}_{2^{j+1}}.
\]
From this, we immediately deduce that
\[
\Omega_{j}\Subset\Omega_{j+1}\quad\text{and}\quad\bigcup_{j\geq1}\Omega
_{j}=\mathbb{R}^{p}.
\]
Hence, we are left to prove the validity of iii). To this end, we arbitrarily
fix a compact set $K\subseteq\mathbb{R}^{n}$ and, according to
\eqref{eq:RadjointEsplicitar}, we write
\[
\mathcal{R}^{\ast}=\sum_{\beta\neq0}r_{\alpha,\beta}(x,\xi)\partial
_{x}^{\alpha}\partial_{\xi}^{\beta}%
\]
(for suitable smooth functions $r_{\alpha,\beta}$, not all identically zero).

Then, exploiting Lemma \ref{lem:AggiuntoOperaInxi}, for every $\alpha,\beta$
we have the following computation, holding true for every $j\in\mathbb{N}$ and
every $x\in K,\,\xi\in\mathbb{R}^{p}$ (here we use the notation $\mathbf{1}%
_{A}$ for the indicator function of a set $A$):
\begin{align*}
&  |r_{\alpha,\beta}(x,\xi)\partial_{\xi}^{\beta}\theta_{j}(\xi)|\leq
\sum_{\mathcal{H}_{E}(\gamma)\leq m_{\beta}}|c_{\alpha,\beta,\gamma}%
(x)|\cdot|\xi^{\gamma}|\cdot|\partial_{\xi}^{\beta}\theta_{j}(\xi)|\\
&  \qquad(\text{since $\theta_{j}$ is constant out of $C_{j}=\{2^{j}%
\leq\varrho_{E}(\xi)\leq2^{j+1}\}$})\\
&  \qquad\leq\sum_{\mathcal{H}_{E}(\gamma)\leq m_{\beta}}\max_{K}%
|c_{\alpha,\beta,\gamma}|\cdot|\xi^{\gamma}|\cdot|\partial_{\xi}^{\beta}%
\theta_{j}(\xi)|\cdot\mathbf{1}_{C_{j}}(\xi)\\
&  \qquad\leq\sum_{\mathcal{H}_{E}(\gamma)\leq m_{\beta}}\max_{K}%
|c_{\alpha,\beta,\gamma}|\cdot|\xi^{\gamma}|\cdot\big[\max_{\mathbb{R}^{p}%
}|\partial^{\beta}\Theta|\cdot(2^{-j})^{\mathcal{H}_{E}(\beta)}\big]\cdot
\mathbf{1}_{C_{j}}(\xi)\\
&  \qquad\leq c_{\alpha,\beta}^{\prime}(K)\sum_{\mathcal{H}_{E}(\gamma)\leq
m_{\beta}}|\xi^{\gamma}|\cdot(2^{-j})^{\mathcal{H}_{E}(\beta)}\cdot
\mathbf{1}_{C_{j}}\\
&  \qquad(\text{writing $\xi=(E_{2^{j}}\circ E_{2^{-j}})(\xi)$})\\
&  \qquad\leq c_{\alpha,\beta}^{\prime}(K)\sum_{\mathcal{H}_{E}(\gamma)\leq
m_{\beta}}(2^{-j})^{\mathcal{H}_{E}(\beta)-\mathcal{H}_{E}(\gamma)}%
\cdot|E_{2^{-j}}(\xi)^{\gamma}|\cdot\mathbf{1}_{C_{j}}.
\end{align*}
We now observe that, since $\mathcal{H}_{E}(\gamma)\leq m_{\beta}%
=\mathcal{H}_{E}(\beta)-1$, we have
\[
(2^{-j})^{\mathcal{H}_{E}(\beta)-\mathcal{H}_{E}(\gamma)}\leq2^{-j};
\]
moreover, since the point $E_{2^{-j}}(\xi)$ belongs to the compact set
$C_{0}=\{1\leq\varrho_{E}\leq2\}$ when $\xi\in C_{j}$ (as $\varrho_{E}$ is
$E_{\lambda}$-homogeneous of degree $1$), we also have
\[
|E_{2^{-j}}(\xi)^{\gamma}|\cdot\mathbf{1}_{C_{j}}(\xi)\leq\max_{\mathcal{H}%
_{E}(\gamma)\leq m_{\beta}}\max_{\eta\in C_{0}}|\eta^{\gamma}|=c_{\beta
}^{\prime\prime}%
\]
for every $\xi\in\mathbb{R}^{p}$ and every multi-index $\gamma$ with
$\mathcal{H}_{E}(\gamma)\leq m_{\beta}$. \medskip

Gathering all these facts, we then conclude that
\begin{align*}
&  |r_{\alpha,\beta}(x,\xi)\partial_{\xi}^{\beta}\theta_{j}(\xi)| \leq
c^{\prime}_{\alpha,\beta}(K) \sum_{\mathcal{H}_{E}(\gamma)\leq m_{\beta}}
c^{\prime\prime}_{\beta}\cdot2^{-j}\\
&  \qquad\leq c^{\prime}_{\alpha,\beta}(K)\cdot c^{\prime\prime}_{\beta}%
\cdot\mathrm{card}\{\gamma:\,\mathcal{H}_{E}(\gamma)\leq m_{\beta}\} \equiv
c_{\alpha,\beta}(K),
\end{align*}
for every $j\in\mathbb{N}$ and every $x\in K,\,\xi\in\mathbb{R}^{p}$, and this
proves that also assertion iii) in property (S2) is fulfilled (as
$c_{\alpha,\beta}(K)$ only depends on $\alpha,\beta$ and $K$).

This completes the proof.
\end{proof}

\bigskip

\begin{proof}
[Proof of Proposition \ref{prop:P1P2}]\noindent-\thinspace\thinspace
\emph{Proof of} (P1). Let $x\neq y\in\mathbb{R}^{n}$ be fixed. Owing to the
\emph{global} estimates of $\widetilde{\Gamma}$ in Theorem
\ref{thm:PropertiesGammGroups}\thinspace-\thinspace(1), to verify property
(P1) it suffices to show that
\begin{equation}
\xi\mapsto\varrho_{D}\big((y,0)^{-1}\star(x,\xi)\big)^{\nu-Q}\in
L^{1}(\mathbb{R}^{p}), \label{eq:ToProveP1}%
\end{equation}
where $\varrho_{D}$ is the continuous homogeneous norm on $\mathbb{G}$
introduced in \eqref{eq:varrhod} and associated with the family $\{D_{\lambda
}\}_{\lambda}$ in \eqref{eq:Dlambdalift}, that is,
\begin{equation}
\varrho_{D}(z,\zeta)=\sum_{i=1}^{n}|z_{i}|^{1/\sigma_{i}}+\sum_{i=1}^{p}%
|\zeta_{i}|^{1/\tau_{i}}=\varrho_{\delta}(z)+\varrho_{E}(\zeta).
\label{eq:varrhoDsomma}%
\end{equation}
while $Q>q$ is the homogeneous dimension of $\mathbb{G}$ (recall that, in the
statement of Theorem \ref{thm:PropertiesGammGroups}, $\Vert\cdot\Vert$ denotes
\emph{any} fixed homogeneous norm on $\mathbb{G}$).

Hence, we turn to establish the above \eqref{eq:ToProveP1}. To this end we
first observe that, by exploiting the change of variable $\xi= \Psi_{x,y}%
^{-1}(\zeta)$ introduced in Remark \ref{rem:CambiDiVariabile}, and using that
$\nu< q < Q$ (see \eqref{eq:nulessq}), we then get
\begin{equation}
\label{eq:ContoDaRiusareP2}%
\begin{split}
\int_{\mathbb{R}^{p}}  &  \varrho_{D}^{\nu-Q}\big((y,0)^{-1}\star
(x,\xi))\big)\,d\xi= \int_{\mathbb{R}^{p}}\varrho_{D}^{\nu-Q}\big((y,0)^{-1}%
\star(x,\Psi_{x,y}^{-1}(\zeta))\big)\,d\zeta\\
&  \qquad= \int_{\{\varrho_{E}\leq1\}} \varrho_{D}^{\nu-Q}\big((y,0)^{-1}%
\star(x,\Psi_{x,y}^{-1}(\zeta))\big)\,d\zeta\\
&  \qquad\qquad\quad+ \int_{\{\varrho_{E} > 1\}}\varrho_{D}^{\nu
-Q}\big((y,0)^{-1}\star(x,\Psi_{x,y}^{-1}(\zeta))\big)\,d\zeta\\
&  \qquad\equiv\mathrm{I}_{1}+\mathrm{I}_{2}.
\end{split}
\end{equation}
Now, the integral $\mathrm{I}_{1}$ is finite, since it is the integral of a
\emph{continuous function} on the \emph{compact set} $\{\varrho_{E}\leq1\}$:
indeed, since $x\neq y$ are fixed, we have
\[
(x,\Psi_{x,y}^{-1}(\zeta))\neq(y,0)\,\,\Longleftrightarrow\,\, (y,0)^{-1}%
\star(x,\Psi_{x,y}^{-1}(\zeta))\neq0\quad\text{for all $\zeta\in\mathbb{R}%
^{p}$},
\]
and thus the map
\[
\zeta\mapsto\varrho_{D}^{\nu-Q}((y,0)^{-1}\star(x,\Psi_{x,y}^{-1}(\zeta)))
\]
is continuous on $\mathbb{R}^{p}$ (as the same is true of $\varrho_{D}$).

As for the integral $\mathrm{I}_{2}$, instead, using the decomposition
\eqref{eq:varrhoDsomma} (and recalling the very definition of $\Psi_{x,y}$ in
Remark \ref{rem:CambiDiVariabile}), we get
\begin{equation}
\label{eq:StimaChiaveP1}%
\begin{split}
0\leq\mathrm{I}_{2}  &  \leq\int_{\{\varrho_{E} > 1\}} \varrho_{E}^{\nu
-Q}\big( \pi_{p}\big((y,0)^{-1}\star(x,\Psi_{x,y}^{-1}(\zeta
))\big)\big)\,d\zeta\\
&  = \int_{\{\varrho_{E} > 1\}} \varrho_{E}^{\nu-Q}\big( \Psi_{x,y}(\Psi
_{x,y}^{-1}(\zeta))\big)\,d\zeta\\
&  = \int_{\{\varrho_{E} > 1\}} \varrho_{E}^{\nu-Q}(\zeta)\,d\zeta
\end{split}
\end{equation}
(where $\pi_{p}:\mathbb{R}^{N}\to\mathbb{R}^{p}$ denotes, as usual, the
projection of $\mathbb{R}^{N}$ onto $\mathbb{R}^{p}$); further\-more, since we
are assuming that \eqref{eq:nulessq} holds, it is easy to see that the
integral in the far right-hand side of \eqref{eq:StimaChiaveP1} is finite.
\vspace{0.05cm}

Indeed, setting $\mathcal{C}_{j}=\big\{\zeta\in\mathbb{R}^{p}:\,2^{j-1}%
<\varrho_{E}(\zeta)\leq2^{j}\big\}$ (for every $j\in\mathbb{N}$), and
re\-calling \eqref{eq:Dlambdalift}, we have the following computation
\begin{equation}%
\begin{split}
&  \int_{\{\varrho_{E}>1\}}\varrho_{E}^{\nu-Q}(\zeta)\,d\zeta=\sum
_{j=1}^{+\infty}\int_{\mathcal{C}_{j}}\varrho_{E}^{\nu-Q}(\zeta)\,d\zeta\\
&  \qquad(\text{by the change of variable $\zeta=E_{2^{j}}(\upsilon)$})\\
&  \qquad=\sum_{j=1}^{+\infty}\Big(\int_{\{\upsilon:\,1/2<\varrho_{E}%
(\upsilon)\leq1\}}\varrho_{E}(\upsilon)^{\nu-Q}\,d\upsilon\Big)\cdot
(2^{j})^{\nu-Q+\mathcal{E}}\\
&  \qquad=\kappa_{0}\sum_{j=1}^{+\infty}(2^{\nu-Q+\mathcal{E}})^{j}%
=(\bigstar),
\end{split}
\label{eq:stimaIntermediaI2daUsare}%
\end{equation}
where $\mathcal{E}\in\mathbb{N}$ is the \emph{homogeneous dimension} of
$(\mathbb{R}^{p},E_{\lambda})$, namely $\mathcal{E}=\sum_{i=1}^{p}\tau_{i}.$
Moreover, using \eqref{eq:dimqGruppiOm} and \eqref{eq:nulessq}, we derive
\[
\nu-Q+\mathcal{E}=\nu-(q+\mathcal{E})+\mathcal{E}=\nu-q<0.
\]
Gathering these two facts, we then conclude that
\begin{equation}
(\bigstar)=c\Big(\frac{1}{1-2^{\nu-q}}-1\Big)<+\infty.
\label{eq:ConclusionI2DaUsare}%
\end{equation}

\noindent-\,\,\emph{Proof of} (P2). Let $y\in\mathbb{R}^{n}$ be fixed, and let
$K\subseteq\mathbb{R}^{n}$ be a compact set. Using once again the global
estimates of $\widetilde{\Gamma}_{0}$ in Theorem
\ref{thm:PropertiesGammGroups}\,-\,(1), and following the notation introduced
above, to verify property (P2) it suffices to show that
\begin{equation}
\label{eq:ToProvePropertyP2Equiv}(x,\xi)\mapsto\varrho_{D}^{\nu-Q}%
\big((y,0)^{-1}\star(x,\xi)\big)\in L^{1}(K\times\mathbb{R}^{p});
\end{equation}
hence, we turn to establish \eqref{eq:ToProvePropertyP2Equiv}. \vspace{0.05cm}

To begin with, by exploiting estimate \eqref{eq:ContoDaRiusareP2} we have
\begin{align*}
&  \int_{K\times\mathbb{R}^{p}}\varrho_{D}^{\nu-Q}\big((y,0)^{-1}\star
(x,\xi)\big)\,dx\,d\xi\\
&  \qquad= \int_{K} \Big(\int_{\mathbb{R}^{p}}\varrho_{D}^{\nu-Q}%
\big((y,0)^{-1}\star(x,\xi)\big)\,d\xi\Big)dx\\
&  \qquad= \int_{K}\Big(\int_{\{\varrho_{E}\leq1\}} \varrho_{D}^{\nu
-Q}\big((y,0)^{-1}\star(x,\Psi_{x,y}^{-1}(\zeta))\big)\,d\zeta\Big)dx\\
&  \qquad\qquad\quad+ \int_{K}\Big(\int_{\{\varrho_{E} > 1\}}\varrho_{D}%
^{\nu-Q}\big((y,0)^{-1}\star(x,\Psi_{x,y}^{-1}(\zeta))\big)\,d\zeta\Big)dx\\
&  \qquad= \int_{K\times\{\varrho_{E}\leq1\}}\varrho_{D}^{\nu-Q}%
\big((y,0)^{-1}\star(x,\Psi_{x,y}^{-1}(\zeta))\big)\,dx\,d\zeta\\
&  \qquad\qquad\quad+ \int_{K\times\{\varrho_{E}> 1\}}\varrho_{D}^{\nu
-Q}\big((y,0)^{-1}\star(x,\Psi_{x,y}^{-1}(\zeta))\big)\,dx\,d\zeta\\
&  \qquad\equiv\mathrm{J}_{1}+\mathrm{J}_{2}.
\end{align*}
Moreover, from the computations \emph{already performed} in the verification
of property (P1), it readily follows that the integral $\mathrm{J}_{2}$ is
finite. Indeed, by \eqref{eq:StimaChiaveP1}-to-\eqref{eq:ConclusionI2DaUsare},
we have
\begin{align*}
0\leq\mathrm{J}_{2}  &  = \int_{K}\Big(\int_{\varrho_{E} > 1} \varrho_{D}%
^{\nu-Q}((y,0)^{-1}\star(x,\Psi_{x,y}^{-1}(\zeta)))\,d\zeta\Big)dx\\
&  \leq\int_{K}\Big(\int_{\{\varrho_{E} > 1\}}\varrho_{E}^{\nu-Q}%
(\zeta)\,d\zeta\Big)dx\\
&  \leq\kappa_{0}\Big(\frac{1}{1-2^{\nu-q}}-1\Big)\cdot|K| < +\infty.
\end{align*}
Finally, we claim that the integral $\mathrm{J}_{1}$ is also finite. Indeed,
by using the \emph{measure-pre\-serving change of variable in $\mathbb{R}^{N}%
$} defined by
\[
(x,\zeta) = (x,\Psi_{x,y}(\xi))\equiv\mathcal{F}_{y}(x,\xi)
\]
(notice that $\mathcal{F}_{y}$ is a smooth diffeomorphism of $\mathbb{R}^{N}$
with Jacobian determi\-nant equal to $1$, due to the properties of $\Psi
_{x,y}$ in Remark \ref{rem:CambiDiVariabile}), we get
\begin{align*}
0\leq\mathrm{J}_{1}  &  = \int_{K^{\prime}} \varrho_{D}^{\nu-Q}\big((y,0)^{-1}%
\star(x,\xi)\big)\,dx\,d\xi= (\bigstar),
\end{align*}
where $K^{\prime}= \mathcal{F}_{y}^{-1}(K\times\{\varrho_{E}\leq1\})$ is a
\emph{compact subset of $\mathbb{R}^{N}$} (depending on the fixed $y$). From
this, by applying the change of variable
\[
(x,\xi) = (y,0)*(a,\alpha) \equiv\tau_{(y,0)}(a,\alpha),
\]
and since the Lebesgue measure is a Haar measure on $\mathbb{G}$, we obtain
\[
(\bigstar) = \int_{K^{\prime\prime}}\varrho_{D}^{\nu-Q}(a,\alpha
)\,da\,d\alpha,
\]
where $K^{\prime\prime}= \tau_{(y,0)}^{-1}(K^{\prime})$ is another
\emph{compact set in $\mathbb{R}^{N}$}.

Now, since $\varrho_{D}$ is a \emph{homogeneous norm} on the group
$\mathbb{G}=(\mathbb{R}^{N},\star,D_{\lambda})$ (with homogeneous dimension
$Q$), and since $\nu-Q>-Q$, by Remark \ref{rem:HomNormLloc} we have
\[
\varrho_{D}^{\nu-Q}\in L_{\mathrm{loc}}^{1}(\mathbb{R}^{N});
\]
thus, since $K^{\prime\prime}$ is compact, we conclude that $\mathrm{J}%
_{1}<+\infty$, as claimed.\medskip
\end{proof}

Gathering all the results established so far, we are finally ready to prove
our main result on the fundamental solution of $\mathcal{L}$.

\bigskip

\begin{proof}
[Proof of Theorem \ref{thm:ExistenceEstimLhGeneral}]Let $\mathbb{G}%
=(\mathbb{R}^{N},\star,D_{\lambda})$ and $\widetilde{X}=\{\widetilde{X}%
_{1},\ldots,\widetilde{X}_{m}\}$ be as in the sta\-tement of Theorem
\ref{thm:Adrift} (which applies, since the $X_{i}$'s satisfy assumption (H.1)
and since we assume that \eqref{eq:pgeqone} holds); we then consider the
operator
\[
\widetilde{\mathcal{L}}=\sum_{|I|=\nu}c_{I}\,\widetilde{X}_{I}.
\]
In view of Proposition \ref{prop:S1S2vere}, we know that the operator
$\widetilde{\mathcal{L}}$ satisfies proper\-ties (S1) and (S2) (that is,
$\widetilde{\mathcal{L}}$ is a \emph{saturable lifting of $\mathcal{L}$}). In
addition, by Proposition \ref{prop:P1P2}, the \emph{global fundamental
solution} $\widetilde{\Gamma}$ of $\widetilde{\mathcal{L}}$ (whose existence
is guaranteed by Theorem \ref{thm:PropertiesGammGroups}, since the
$\widetilde{X}_{i}$'s are left-invariant on the homogeneous group $\mathbb{G}%
$) satisfies the integrability properties (P1) and (P2). \vspace{0.1cm}

In view of these facts, we are then entitled to exploit \cite[Thm.\,2.1]%
{BB-lift} in the present context, thereby ensuring that the `saturated'
function
\begin{equation}
\Gamma(x,y)=\int_{\mathbb{R}^{p}}\widetilde{\Gamma}((x,\xi),(y,0))\,d\xi
=\int_{\mathbb{R}^{p}}\widetilde{\Gamma}_{0}((y,0)^{-1}\star(x,\xi
))\,d\xi\label{eq:defGammaLiftingRF}%
\end{equation}
(where $\widetilde{\Gamma}_{0}(x,\xi)=\widetilde{\Gamma}((x,\xi),(0,0))$ is
the global fundamental solution of $\widetilde{\mathcal{L}}$ with pole at the
origin, and $p=N-n\geq1$ - see \eqref{eq:pgeqone}), is indeed a \emph{global
fundamental solution} of $\mathcal{L}$, that is, for every $y\in\mathbb{R}%
^{n}$, we have
\[
\text{$\Gamma(\cdot,y)\in L_{\mathrm{loc}}^{1}(\mathbb{R}^{n})$\quad
and\quad$\Gamma(\cdot,y)$ fulfills property (b)}.
\]
The fact that $\Gamma(x,\cdot)\in L_{\mathrm{loc}}^{1}(\mathbb{R}^{n})$ for
every fixed $x\in\mathbb{R}^{n}$ will follow from (\ref{eq:GammaGammastarGC}),
that we will prove below. Let us now show that $\Gamma$ is also locally
integrable in the joint variables, thus completing the proof of point (a).
\vspace{0.05cm}

To this end, we proceed as in \cite{BB-lift} and we consider the following
steps. \vspace{0.1cm}

\noindent-\,\,\emph{Step 1}). In this first step we show that the map
\[
(x,y,\xi)\mapsto\widetilde{\Gamma}_{0}((y,0)^{-1}\star(x,\xi))
\]
is locally integrable on $\mathbb{R}^{n}\times\mathbb{R}^{n}\times
\mathbb{R}^{p}$. Let then $K_{1},K_{2}\subseteq\mathbb{R}^{n}$ and
$H\subseteq\mathbb{R}^{p}$ be compact sets. By Tonelli's Theorem, and by using
the change of variable
\[
(x,\xi) = (y,0)\star(z,\zeta),
\]
we can write
\begin{align*}
&  \int_{K_{1}\times K_{2}\times H}|\widetilde{\Gamma}_{0}((y,0)^{-1}%
\star(x,\xi))|\,dx\,dy\,d\xi\\
&  \qquad= \int_{K_{2}}\left(  \int_{\tau_{y}^{-1}(K_{1}\times H)}%
|\widetilde{\Gamma}_{0}(z,\zeta)|\,dz\,d\zeta\right)  d y = (\bigstar),
\end{align*}
where $\tau_{y}$ denotes the left-translation by $(y,0)$ on the group
$\mathbb{G}$. We now observe that, for every $y\in K_{2}$, the set $\tau
_{y}^{-1}(K_{1}\times H)$ is included in the \emph{compact set}
\[
\mathcal{K} = (K_{2}\times\{0\})^{-1}\star(K_{1}\times H);
\]
therefore, by recalling that $\widetilde{\Gamma}_{0}\in L^{1}_{\mathrm{loc}%
}(\mathbb{R}^{N})$, we obtain
\[
(\bigstar) \leq\int_{K_{2}}\left(  \int_{\mathcal{K}}|\widetilde{\Gamma}%
_{0}(z,\zeta)|\,dz\,d\zeta\right)  d y <+\infty.
\]
-\,\,\emph{Step 2}). In this second step we complete the demonstration of the
claimed \emph{joint integrability property} of $\Gamma$, that is, we prove
that $\Gamma\in L^{1}_{\mathrm{loc}}(\mathbb{R}^{n}\times\mathbb{R}^{n})$.

Let then $K_{1},K_{2}\subseteq\mathbb{R}^{n}$ be compact sets, and let
\[
\Phi: \mathbb{R}^{n}\times\mathbb{R}^{n}\times\mathbb{R}^{p}\to\mathbb{R}%
^{n}\times\mathbb{R}^{n}\times\mathbb{R}^{p} \quad\Phi(x,y,\xi) :=
\big(x,y,\Psi_{x,y}(\xi)\big).
\]
Taking into account the properties of $\Psi_{x,y}$ recalled in Remark
\ref{rem:CambiDiVariabile}, it is readily seen that $\Phi$ defines a smooth
diffeomorphism of $\mathbb{R}^{n}\times\mathbb{R}^{n}\times\mathbb{R}^{p}$,
with Jacobian determinant equal to $\pm1$. From this, by applying Fubini's
Theorem and by performing the change of variable $(x,y,\xi) = \Phi
^{-1}(u,v,\zeta)$, we get
\[%
\begin{split}
&  \int_{K_{1}\times K_{2}}|\Gamma(x,y)|\,d x\,d y \leq\int_{K_{1}\times
K_{2}\times\mathbb{R}^{p}}|\widetilde{\Gamma}_{0}((y,0)^{-1}\star
(x,\xi))|\,dx\,dy\,d\xi\\
&  \qquad= \int_{K_{1}\times K_{2}\times\mathbb{R}^{p}} |\widetilde{\Gamma
}_{0}\big((v,0)^{-1}\star(u,\Psi_{u,v}^{-1}(\zeta))\big)|\,du\,dv\,d\zeta\\
&  \qquad= \int_{K_{1}\times K_{2}\times\{\varrho_{E}<1\}}\{\ldots\}\,d u\,d
v\,d\zeta+ \int_{K_{1}\times K_{2}\times\{\varrho_{E}\geq1\}}\{\ldots\}\,d
u\,d v\,d\zeta\\
&  \qquad= \mathrm{I} + \mathrm{II},
\end{split}
\]
where, as usual, $\varrho_{E}$ is the continuous homogeneous norm on
$\mathbb{R}^{p}$ introduced in \eqref{eq:varrhod} and associated with the
family $\{E_{\lambda}\}_{\lambda}$ in \eqref{eq:Dlambdalift}. \vspace{0.1cm}

We now observe that, since the product $K_{1}\times K_{2}\times\{\varrho
_{E}<1\}$ is bounded in $\mathbb{R}^{n}\times\mathbb{R}^{n}\times
\mathbb{R}^{p}$, by Step 1) we easily infer that the integral $\mathrm{I}$ is finite.

As for integral $\mathrm{II}$ we notice that, by exploiting the global
estimates of $\widetilde{\Gamma}_{0}$ in Theorem
\ref{thm:PropertiesGammGroups}-(1) and by arguing exactly as in the proof of
Proposition \ref{prop:P1P2} (see, precisely,
\eqref{eq:StimaChiaveP1}-to-\eqref{eq:ConclusionI2DaUsare}), we obtain%
\[%
\begin{split}
\mathrm{II}  &  \leq c\, \int_{K_{1}\times K_{2}\times\{\varrho_{E}\geq
1\}}\varrho_{D}^{\nu-Q}\big((v,0)^{-1}\star(u,\Psi_{u,v}^{-1}(\zeta))\big) \,d
u\,d v\,d\zeta\\
&  (\text{since $\varrho_{D} = \varrho_{\delta}+\varrho_{E}$, see
\eqref{eq:varrhoDsomma}})\\
&  \leq c\, \int_{K_{1}\times K_{2}\times\{\varrho_{E}\geq1\}}\varrho_{E}%
^{\nu-Q} (\zeta) \,d u\,d v\,d\zeta\\
&  = c\cdot|K_{1}\times K_{2}|\,\int_{\{\varrho_{E}\geq1\}} \varrho_{E}%
^{\nu-Q}(\zeta)\,d\zeta< +\infty.
\end{split}
\]
This completes the proof of point (a). \medskip

Let us now prove point (c). Let $\varphi\in C_{0}^{\infty}(\mathbb{R}^{n})$,
and%
\[
u\left(  x\right)  =\int_{\mathbb{R}^{n}}\Gamma(x,y)\varphi(y)\,dy.
\]
Since $\Gamma\in L_{\mathrm{loc}}^{1}\left(  \mathbb{R}^{2n}\right)  $, then
$u\in L_{\mathrm{loc}}^{1}\left(  \mathbb{R}^{n}\right)  $. Moreover, for
every test function $\psi$,
\begin{align*}
\left\langle \mathcal{L}u,\psi\right\rangle  &  =\left\langle u,\mathcal{L}%
^{\ast}\psi\right\rangle =\int_{\mathbb{R}^{n}}u\left(  x\right)
\mathcal{L}^{\ast}\psi\left(  x\right)  dx\\
&  =\int_{\mathbb{R}^{n}}\left(  \int_{\mathbb{R}^{n}}\Gamma(x,y)\varphi
(y)\,dy\right)  \mathcal{L}^{\ast}\psi\left(  x\right)  dx\\
&  =\int_{\mathbb{R}^{n}}\varphi(y)\left(  \int_{\mathbb{R}^{n}}%
\Gamma(x,y)\mathcal{L}^{\ast}\psi\left(  x\right)  dx\right)  dy\\
&  (\text{by point (b)})\\
&  =-\int_{\mathbb{R}^{n}}\varphi(y)\psi\left(  y\right)  dy=\left\langle
-\varphi,\psi\right\rangle .
\end{align*}
Therefore $\mathcal{L}u=-\varphi$ in the distributional sense. Since
$\mathcal{L}$ is hypoelliptic, $u$ is (a.e. equal to) a smooth function
$u^{\ast}$ satisfying $\mathcal{L}u^{\ast}=-\varphi$ pointwise. \medskip

We now turn to prove the validity of properties \emph{(I)}\,-\,\emph{(V)} in
the statement of Theorem \ref{thm:ExistenceEstimLhGeneral}. \medskip

-\thinspace\thinspace\emph{Proof of (I)}. Our starting point is the following
general fact, whose demonstration is contained in the proof of
\cite[Lem.\,4.3]{BBB-fundsol}. \emph{Let
\[
\mathcal{V}=\big\{(x,y,\xi)\in\mathbb{R}^{n}\times\mathbb{R}^{n}%
\times\mathbb{R}^{p}:\,(x,\xi)\neq(y,0)\big\},
\]
and suppose $g\in C^{\infty}(\mathcal{V})$ is homogeneous of degree $d<q-Q$
with respect to the family of dilations $\{F_{\lambda}\}_{\lambda}$ in
$\mathbb{R}^{n}\times\mathbb{R}^{n}\times\mathbb{R}^{p}$ defined as follows
\[
F_{\lambda}(x,y,\xi)=(\delta_{\lambda}(x),\delta_{\lambda}(y),E_{\lambda}%
(\xi))
\]
\emph{(}recall that $Q>q$ is the homogeneous dimension of the homogeneous
group $\mathbb{G}$, and $E_{\lambda}$ is given by
\eqref{eq:Dlambdalift}\emph{)}. Moreover, let $Z$ be any smooth vector field
acting in the $(x,y)$-variables, homogeneous of degree $\mu>0$ with respect to
the family of dilations $\{G_{\lambda}\}_{\lambda}$ in $\mathbb{R}^{n}%
\times\mathbb{R}^{n}$ defined as follows
\[
G_{\lambda}(x,y)=(\delta_{\lambda}(x),\delta_{\lambda}(y)).
\]
Then, the following assertions hold:}

\begin{itemize}
\item[\emph{(i)}] \emph{for any fixed $(x,y)\in\mathbb{R}^{2n}\setminus
\mathbb{D}$, the map $\xi\mapsto g(x,y,\xi)\in L^{1}(\mathbb{R}^{p})$};

\item[\emph{(ii)}] \emph{$Z$ can pass under the integral sign as follows}
\[
Z\Big\{(x,y)\mapsto\int_{\mathbb{R}^{p}}g(x,y,\xi)\,d\xi\Big\}
= \int_{\mathbb{R}^{p}}Z\Big\{(x,y)\mapsto g(x,y,\xi)\Big\}\,d\xi
\quad\text{for $x\neq y$}.
\]

\end{itemize}

\noindent Now, since the function $g_{0}(x,y,\xi)=\widetilde{\Gamma}%
_{0}((y,0)^{-1}\star(x,\xi))\in C^{\infty}(\mathcal{V})$ and all of its
classical $(x,y)$\thinspace-\thinspace derivatives are $F_{\lambda}%
$-homogeneous of degree
\[
d\leq\nu-Q<q-Q
\]
(recall that $\widetilde{\Gamma}_{0}$ is $D_{\lambda}$-homogeneous of degree
$\nu-Q$), we can repeatedly apply this general fact to the
\emph{representation formula} \eqref{eq:defGammaLiftingRF}, thus obtaining
\[
\exists\,\,\partial_{x}^{\alpha}\partial_{y}^{\beta}\Gamma(x,y)=\int%
_{\mathbb{R}^{p}}\partial_{x}^{\alpha}\partial_{y}^{\beta}\Big\{(x,y)\mapsto
\widetilde{\Gamma}_{0}((y,0)^{-1}\star(x,\xi))\Big\}\,d\xi
\]
for every $(x,y)\in\mathbb{R}^{2n}\setminus\mathbb{D}$ and for every
multi-indexes $\alpha,\beta\in\mathbb{N}^{n}$. As a conseque\-nce, in order to
show that $\Gamma\in C^{\infty}(\mathbb{R}^{2n}\setminus\mathbb{D})$ it
suffices to prove that, if $g\in C^{\infty}(\mathcal{V})$ is a \emph{generic}
function as above, then the map
\begin{equation}
(x,y)\mapsto\Phi_{g}(x,y)=\int_{\mathbb{R}^{p}}g(x,y,\xi)\,d\xi
\label{eq:defPhigdausaredopo}%
\end{equation}
is \emph{continuous on $\mathbb{R}^{2n}\setminus\mathbb{D}$}. \vspace{0.1cm}

Let then $g:\mathcal{V}\rightarrow\mathbb{R}$ be as above, and let
$(x_{0},y_{0})\in\mathbb{R}^{2n}\setminus\mathbb{D}$ be fixed. Moreover, let
$r>0$ be so small that $\overline{B}_{r}(x_{0})\cap\overline{B}_{r}%
(y_{0})=\varnothing$, and let
\[
\{x_{k}\}_{k}\subseteq B_{r}(x_{0}),\qquad\{y_{k}\}_{k}\subseteq B_{r}(y_{0})
\]
be two sequences converging to $x_{0}$ and $y_{0}$, respectively, as
$k\rightarrow+\infty$. Since, by assumption, $g\in C^{\infty}(\mathcal{V})$,
for every fixed $\xi\in\mathbb{R}^{p}$ we have
\[
\lim_{k\rightarrow+\infty}g(x_{k},y_{k},\xi)=g(x_{0},y_{0},\xi);
\]
taking into account \eqref{eq:defPhigdausaredopo}, we then write
\[
\Phi_{g}(x_{k},y_{k})=\int_{\{\varrho_{E}\leq1\}}g(x_{k},y_{k},\xi)\,d\xi
+\int_{\{\varrho_{E}>1\}}g(x_{k},y_{k},\xi)\,d\xi
\]
(here, $\varrho_{E}$ is the continuous homogeneous norm introduced in
\eqref{eq:varrhod} and associa\-ted with the family $\{E_{\lambda}\}_{\lambda
}$ in \eqref{eq:Dlambdalift}), and we provide an integrable dominating
function $\Theta$ for the above integrals, independent of $k$. As for the
first integral we observe that, setting
\[
K:=\overline{B}_{r}(x_{0})\times\overline{B}_{r}(y_{0})\times\{\varrho_{E}%
\leq1\},
\]
by the choice of $r$ we see that $K$ is a compact subset of $\mathcal{V}$;
thus, since $g\in C^{\infty}(\mathcal{V})$, there exists a constant $C>0$ such
that
\[
|g(x_{k},y_{k},\xi)|\leq C\quad\text{for every $k\in\mathbb{N},\,\xi
\in\{\varrho_{E}\leq1\}$},
\]
and hence we can choose the \emph{constant function} $\Theta=C$ as an
integrable domina\-ting function in this case. As for the second integral,
instead, we need to exploit a more delicate argument, which is based on the
\emph{$F_{\lambda}$-homogeneity of $g$}.

To begin with, we fix $\rho_{0}>0$ so large that
\[
B_{r}(x_{0}),\,B_{r}(y_{0})\subseteq\{\varrho_{\delta}\leq\rho_{0}\}
\]
(here, $\varrho_{\delta}$ is the continuous homogeneous norm introduced in
\eqref{eq:varrhod} and associa\-ted with the family $\{\delta_{\lambda
}\}_{\lambda}$ in Assumption (H1)), and we set
\[
\mathcal{K}=\{\varrho_{\delta}\leq\rho_{0}\}\times\{\varrho_{\delta}\leq
\rho_{0}\}\times\{\varrho_{E}=1\}\subseteq\mathbb{R}^{n}\times\mathbb{R}%
^{n}\times\mathbb{R}^{p}.
\]
Since, obviously, $\mathcal{K}$ is a compact subset of $\mathcal{V}$, and
since $g\in C^{\infty}(\mathcal{V})$, it is possible to find a positive
constant $\kappa_{0}>0$ such that
\begin{equation}
|g(x_{k},y_{k},\xi)|\leq\kappa_{0}\quad\text{for every $k\in\mathbb{N}%
,\,\xi\in\{\varrho_{E}=1\}$.} \label{eq.boundgcompact}%
\end{equation}
On the other hand, if $\xi\in\mathbb{R}^{p}$ is such that $\varrho_{E}(\xi
)>1$, and if we set
\[
\lambda:=1/\varrho_{E}(\xi)\in(0,1),
\]
it is readily seen that $F_{\lambda}(x_{k},y_{k},\xi)\in\mathcal{K}$ for every
$k\in\mathbb{N}$; thus, by \eqref{eq.boundgcompact} and the $F_{\lambda}%
$\thinspace\--\thinspace homogeneity of $g$, we obtain the estimate
\begin{align*}
|g(x_{k},y_{k},\xi)|  &  =\big|g\big(F_{1/\lambda}(F_{\lambda}(x_{k},y_{k}%
,\xi))\big)\big|\\
&  =\lambda^{-d}|g(F_{\lambda}(x_{k},y_{k},\xi))|\\
&  \leq\kappa_{0}\,\lambda^{-d}=\kappa_{0}\,\varrho_{E}^{d}(\xi),
\end{align*}
holding true for every $k\in\mathbb{N}$ and every $\xi\in\{\varrho_{E}>1\}$.
In view of this estimate, we can then choose $\Theta=\kappa_{0}\varrho_{E}%
^{d}$ as a dominating function in this case, whose integrability can be proved
by arguing exactly as in Proposition \ref{prop:P1P2} (see, precisely,
\eqref{eq:stimaIntermediaI2daUsare} with $d$ in place of $\nu-Q$), taking into
account that
\[
d+\mathcal{E}=d+(Q-q)<0
\]
(see \eqref{eq:dimqGruppiOm} and recall that $\mathcal{E}=\sum_{j=1}^{p}%
\tau_{j}$). We may therefore exploit the Dominated Convergence Theorem to
conclude that%
\begin{align*}
\exists\,\,\lim_{k\rightarrow+\infty}\Phi_{g}(x_{k},y_{k})  &  =\int%
_{\{\varrho_{E}\leq1\}}g(x_{0},y_{0},\xi)\,d\xi+\int_{\{\varrho_{E}%
>1\}}g(x_{0},y_{0},\xi)\,d\xi\\
&  =\Phi_{g}(x_{0},y_{0}),
\end{align*}
which proves the continuity of $\Phi_{g}$ out of $\mathbb{D}$ (by the
arbitrariness of $x_{0},y_{0}$).

\medskip

-\thinspace\thinspace\emph{Proof of (II)}. Let $x\neq y\in\mathbb{R}^{n}$ be
fixed, and let $\lambda>0$. Recalling that $\widetilde{\Gamma}_{0}$ is
$D_{\lambda}$\--homo\-geneous of degree $\nu-Q$, by performing the change of
variable
\[
\xi=E_{\lambda}(\xi^{\prime})
\]
in the \emph{representation formula} \eqref{eq:defGammaLiftingRF} we get
\begin{align*}
&  \Gamma(\delta_{\lambda}(x),\delta_{\lambda}(y))=\int_{\mathbb{R}^{p}%
}\widetilde{\Gamma}_{0}((\delta_{\lambda}(y),0)^{-1}\star(\delta_{\lambda
}(x),\xi))\,d\xi\\
&  \qquad=\lambda^{\mathcal{E}}\int_{\mathbb{R}^{p}}\widetilde{\Gamma}%
_{0}((\delta_{\lambda}(y),0)^{-1}\star(\delta_{\lambda}(x),E_{\lambda}%
(\xi^{\prime})))\,d\xi^{\prime}\\
&  \qquad(\text{since $D_{\lambda}=(\delta_{\lambda},E_{\lambda})$, see
\eqref{eq:Dlambdalift}})\\
&  \qquad=\lambda^{\mathcal{E}}\int_{\mathbb{R}^{p}}\widetilde{\Gamma}%
_{0}\big(D_{\lambda}((y,0)^{-1}\star(x,\xi^{\prime}))\big)\,d\xi^{\prime}\\
&  \qquad=\lambda^{\mathcal{E}+\nu-Q}\int_{\mathbb{R}^{p}}\widetilde{\Gamma
}_{0}((y,0)^{-1}\star(x,\xi^{\prime}))\,d\xi^{\prime}\\
&  \qquad=\lambda^{\nu-(Q-\mathcal{E})}\,\Gamma(x,y),
\end{align*}
where $\mathcal{E}=\sum_{j=1}^{p}\tau_{j}$, see \eqref{eq:Dlambdalift}. This,
together with \eqref{eq:dimqGruppiOm}, gives \
\[
\Gamma(\delta_{\lambda}(x),\delta_{\lambda}(y))=\lambda^{\nu-q}\,\Gamma(x,y)
\]
which proves the joint homogeneity of $\Gamma$. \medskip

-\,\,\emph{Proof of (III)}. Let $y\in\mathbb{R}^{n}$ be fixed. By the change
of variable $\xi= \Psi_{x,y}^{-1}(\zeta)$, for every $x\neq y$ we can write
\begin{align*}
\Gamma(x,y)  &  = \int_{\mathbb{R}^{p}}\widetilde{\Gamma}_{0} ((y,0)^{-1}%
\star(x,\xi))\,d\xi= \int_{\mathbb{R}^{p}}\widetilde{\Gamma}_{0}
((y,0)^{-1}\star(x,\Psi^{-1}_{x,y}(\zeta)))\,d\zeta\\
&  = \int_{\{\varrho_{E}\leq1\}}\{\cdots\}\,d\zeta+ \int_{\{\varrho_{E} >
1\}}\{\cdots\}\,d\zeta\equiv\mathrm{I}_{1}+\mathrm{I}_{2}.
\end{align*}
Moreover, owing to \eqref{eq:vanishingGammazeroInf} in Theorem
\ref{thm:PropertiesGammGroups}, we easily see that
\[
\lim_{|x|\to+\infty}\widetilde{\Gamma}_{0} ((y,0)^{-1}\star(x,\Psi^{-1}%
_{x,y}(\zeta))) = 0\quad\text{uniformly w.r.t.\,$\zeta\in\mathbb{R}^{p}$}.
\]
The desired assertion \emph{(III)} then follows by a dominated-convergence
argument, cho\-osing as a \emph{dominating function} a constant function for
$\mathrm{I}_{1}$, and $\varrho_{E}^{\nu-Q}$ for $\mathrm{I}_{2}$ (reasoning
like in \ref{eq:StimaChiaveP1}).\medskip

-\,\,\emph{Proof of (IV)}. Assume that $\Gamma_{1},\Gamma_{2}$ are two
functions satisfying properties (b) and \emph{(III)}, and let $y\in
\mathbb{R}^{n}$ be fixed. Setting $\gamma= (\Gamma_{1}-\Gamma_{2})(\cdot,y)$,
by (b) we have
\[
\mathcal{L}\gamma= 0\quad\text{in $\mathbb{R}^{n}$}.
\]
From this, by Theorem \ref{Thm main hypoellipticity}, $\gamma\in C^{\infty
}(\mathbb{R}^{n})$; moreover, by \emph{(III)}, $\gamma$ vanishes at infinity.
We can then apply Theorem \ref{Thm main Liouville}, ensuring that
$\gamma\equiv0$, and uniqueness follows. \medskip

-\,\,\emph{Proof of (V)}. Let $x\neq y\in\mathbb{R}^{n}$ be fixed. By applying
the change of variable
\[
\xi=\Phi_{x,y}(\zeta)
\]
described in Remark \ref{rem:CambiDiVariabile} to the \emph{representation
formula} \eqref{eq:defGammaLiftingRF}, and taking into account Theorem
\ref{thm:PropertiesGammGroups}\thinspace-\thinspace(6), we get the following
computation
\begin{align*}
\Gamma^{\ast}(x,y)  &  =\int_{\mathbb{R}^{p}}\widetilde{\Gamma}_{0}^{\ast
}((y,0)^{-1}\star(x,\xi))\,d\xi\\
&  =\int_{\mathbb{R}^{p}}\widetilde{\Gamma}_{0}^{\ast}\big((y,0)^{-1}%
\star(x,\Phi_{x,y}(\zeta))\big)\,d\zeta\\
&  (\text{by definition of $\Phi_{x,y}$, see \eqref{eq:CVPhiIdentity}})\\
&  =\int_{\mathbb{R}^{p}}\widetilde{\Gamma}_{0}^{\ast}((y,\zeta)^{-1}%
\star(x,0))\,d\zeta\\
&  (\text{since $\widetilde{\Gamma}_{0}^{\ast}(z)=\widetilde{\Gamma}%
_{0}(z^{-1})$, see Theorem \ref{thm:PropertiesGammGroups}})\\
&  =\int_{\mathbb{R}^{p}}\widetilde{\Gamma}_{0}((x,0)^{-1}\star(y,\zeta
))\,d\zeta=\Gamma(y,x),
\end{align*}
which is exactly the desired \eqref{eq:GammaGammastarGC}.
\end{proof}

\medskip

Finally, we can turn to the proof of the \emph{sharp pointwise estimates} for
the global fundamental solution $\Gamma$ of $\mathcal{L}$ and for its
derivatives, stated in Theorem \ref{Thm estimates on Gamma}. To this aim, we
first establish the following \emph{representation formulas} for any
$X$\--derivative of $\Gamma$ (both with respect to $x$ and $y$).

\begin{theorem}
[Formulas for the $X$-derivatives of $\Gamma$]%
\label{thm:ReprFormulaDerivatives} Let $\Gamma$ and $\Gamma^{\ast}$ be as in
Theorem \ref{thm:ExistenceEstimLhGeneral} \emph{(}so that $\Gamma,\Gamma
^{\ast}\in C^{\infty}(\mathbb{R}^{2n}\setminus\mathbb{D})$\emph{)}.

Then, for any $s,t\geq1$, and any choice of indexes
\[
i_{1},\ldots,i_{s},j_{1},\ldots,j_{t}\in\{1,\ldots,m\},
\]
the following representation formulas hold true for $x\neq y$ in
$\mathbb{R}^{n}$:
\begin{align*}
&  X_{i_{1}}^{x}\cdots X_{i_{s}}^{x}\big(\Gamma(\cdot,y)\big)(x)=\int%
_{\mathbb{R}^{p}}\Big(\widetilde{X}_{i_{1}}\cdots\widetilde{X}_{i_{s}%
}\widetilde{\Gamma}_{0}\Big)\Big((y,0)^{-1}\ast(x,\xi)\Big)d\xi;\\[0.2cm]
&  X_{j_{1}}^{y}\cdots X_{j_{t}}^{y}\big(\Gamma(x,\cdot)\big)(y)=\int%
_{\mathbb{R}^{p}}\Big(\widetilde{X}_{j_{1}}\cdots\widetilde{X}_{j_{t}%
}\widetilde{\Gamma}_{0}^{\ast}\Big)\Big((x,0)^{-1}\ast(y,\xi)\Big)\,d\xi
;\\[0.2cm]
&  X_{j_{1}}^{y}\cdots X_{j_{t}}^{y}X_{i_{1}}^{x}\cdots X_{i_{s}}^{x}%
\Gamma(x,y)\\
&  \qquad\quad=\int_{\mathbb{R}^{p}}\bigg(\widetilde{X}_{j_{1}}\cdots
\widetilde{X}_{j_{t}}\Big(\big(\widetilde{X}_{i_{1}}\cdots\widetilde{X}%
_{i_{s}}\widetilde{\Gamma}_{0}\big)\circ\iota\Big)\bigg)\Big((x,0)^{-1}%
\ast(y,\xi)\Big)d\xi.
\end{align*}
Here, $\iota$ denotes the inversion map of the Lie group $\mathbb{G}$.
\end{theorem}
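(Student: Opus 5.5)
The plan is to differentiate under the integral sign in the representation formula \eqref{eq:defGammaLiftingRF}, as in the proof of point (I), and then rewrite each $X$-derivative of the integrand as a left-invariant $\widetilde X$-derivative of $\widetilde\Gamma_0$.

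\textbf{Step 1: $x$-derivatives.} By the general fact used in the proof of (I) (from \cite[Lem.\,4.3]{BBB-fundsol}), any vector field $X^x_{i}$ may pass under the integral sign. Each $X^x_i$ is homogeneous of positive degree with respect to $G_\lambda$, and the integrand $g_0(x,y,\xi)=\widetilde\Gamma_0((y,0)^{-1}\star(x,\xi))$ and all its $(x,y)$-derivatives are $F_\lambda$-homogeneous of degree $\le \nu-Q<q-Q$. Iterating this $s$ times gives
\[
X^x_{i_1}\cdots X^x_{i_s}\Gamma(x,y)=\int_{\mathbb{R}^p}X^x_{i_1}\cdots X^x_{i_s}\big[\widetilde\Gamma_0((y,0)^{-1}\star(x,\xi))\big]\,d\xi .
\]
Next write $X_i=\widetilde X_i-R_i$, where $R_i$ acts only in $\xi$. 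Then $X^x_{i_1}\cdots X^x_{i_s}$ equals $\widetilde X_{i_1}\cdots\widetilde X_{i_s}$, acting in $(x,\xi)$, plus a sum of terms each containing at least one $\xi$-derivative. By Lemma \ref{lem:AggiuntoOperaInxi}, applied to the non-transposed products, these remainder terms have the form $\sum_{\beta\ne0} r_{\alpha,\beta}\partial^\alpha_x\partial^\beta_\xi$ with polynomial coefficients.

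Since $\widetilde X_i$ is left-invariant, $\widetilde X_{i_1}\cdots\widetilde X_{i_s}$ applied to $(x,\xi)\mapsto\widetilde\Gamma_0((y,0)^{-1}\star(x,\xi))$ equals $(\widetilde X_{I}\widetilde\Gamma_0)((y,0)^{-1}\star(x,\xi))$. It therefore remains to show that the remainder terms integrate to zero in $\xi$.

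\textbf{Main obstacle: vanishing of the remainder.} I expect this to be the hardest step. My first attempt is to mimic the saturation argument of \cite[Thm.\,2.1]{BB-lift}. Rather than working pointwise in $\xi$, I would integrate by parts in $\xi$ against the cutoffs $\theta_j$ of Proposition \ref{prop:S1S2vere}. The terms in which $\partial_\xi$ falls on $\theta_j$ are controlled, via the bound \eqref{eq:assumptionS2estim}, by integrals over the annuli $C_j$ of derivatives of $\widetilde\Gamma_0$. The homogeneity estimate $\varrho_E^{d}$ with $d+\mathcal{E}<0$ (as in the proof of (I)) shows that these tend to $0$ as $j\to\infty$.

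A cleaner alternative, which I would try first, avoids integrating by parts against a generic remainder. One checks directly that for each single field, $\int_{\mathbb{R}^p} R_i^{\xi} h\,d\xi=\int_{\mathbb{R}^p}\operatorname{div}_\xi(R_i h)\,d\xi$ when $\operatorname{div}_\xi R_i=0$. This divergence-free property should follow from the pyramid-shaped coefficients, exactly as $X_i^*=-X_i$ does, since the coefficients of $\widetilde X_i$ satisfy \eqref{eq:piramydshape}-type independence on $\mathbb{R}^N$. The boundary term then vanishes by the decay of $h=\widetilde X_{J}\widetilde\Gamma_0$ (degree $\le\nu-Q$, so $|R_i h|\lesssim\varrho_E^{d'}$ with $d'+\mathcal{E}<1$ on the spheres $\varrho_E=2^j$, whose surface measure grows like $2^{j(\mathcal{E}-1)}$ in homogeneous units). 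One then inducts on $s$, peeling off one $\widetilde X_{i}$ at a time: at each stage,
\[
X^x_{i}\!\int \!F\,d\xi=\int\widetilde X_iF\,d\xi-\int R_iF\,d\xi=\int\widetilde X_iF\,d\xi,
\]
with $F=(\widetilde X_{J}\widetilde\Gamma_0)((y,0)^{-1}\star(x,\cdot))$.

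\textbf{Steps 2 and 3: $y$-derivatives and mixed derivatives.} For the $y$-formula, apply Step 1 to $\Gamma^*$ using (V): $\Gamma(x,\cdot)(y)=\Gamma^*(y,x)$, so $X^y_J\Gamma(x,y)=X^{y}_J\Gamma^*(y,x)$, and the first formula with $\widetilde\Gamma_0^*$ and the roles of $x,y$ swapped gives the second formula. For the mixed formula, first rewrite $X^x_I\Gamma(x,y)$ using the change of variable $\Phi_{x,y}$ of Remark \ref{rem:CambiDiVariabile} as
\[
\int_{\mathbb{R}^p}(\widetilde X_I\widetilde\Gamma_0)\big((y,\zeta)^{-1}\star(x,0)\big)\,d\zeta=\int_{\mathbb{R}^p}\big((\widetilde X_I\widetilde\Gamma_0)\circ\iota\big)\big((x,0)^{-1}\star(y,\zeta)\big)\,d\zeta .
\]
This is a saturation in $y$ of a function homogeneous of degree $\nu-|I|-Q$. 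Then apply the Step 1 argument in the variable $y$ to this integrand; the same divergence and decay argument applies, since the degree only decreases.
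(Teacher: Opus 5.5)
Your proposal is correct and follows the paper's route. The paper combines the saturation formula \eqref{eq:defGammaLiftingRF}, identity \eqref{eq:GammaGammastarGC} and the change of variable $\xi=\Phi_{x,y}(\zeta)$ with the one-step interchange $\int_{\mathbb{R}^p}X_i\{x\mapsto h((y,0)^{-1}\star(x,\xi))\}\,d\xi=\int_{\mathbb{R}^p}(\widetilde X_i h)((y,0)^{-1}\star(x,\xi))\,d\xi$ for $h$ homogeneous of degree $<q-Q$, which it simply quotes from \cite[Lem.\,4.3]{BBB-fundsol}; you instead reprove that fact via $\operatorname{div}_\xi R_i=0$ and decay at infinity. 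For the decay step, use your cutoff version rather than the surface-measure heuristic, whose exponents $2^{j(\mathcal{E}-1)}$ and $d'+\mathcal{E}<1$ ignore the different weights $\tau_j$ and need not hold as stated: $\int R_iF\,d\xi=\lim_k\bigl(-\int F\,R_i\theta_k\,d\xi\bigr)=0$, because $R_i\theta_k$ is uniformly bounded and supported in $\{\varrho_E\ge 2^k\}$, $F\in L^1(\mathbb{R}^p)$, and $R_iF=\widetilde X_iF-X_i^xF\in L^1(\mathbb{R}^p)$.
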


\begin{proof}
The proof of this result is analogous to that of \cite[Thm.\,9.4]%
{BBB-fundsol}. Indeed, it is sufficient to combine the \emph{representation
formula} \eqref{eq:defGammaLiftingRF} (together with identity
(\ref{eq:GammaGammastarGC}) and the change of variable $\xi=\Phi_{x,y}(\zeta)$
from Remark \ref{rem:CambiDiVariabile}) with the following general fact
established in \cite[Lem.\,4.3]{BBB-fundsol}: \vspace{0.05cm}

\emph{If $h\in C^{\infty}(\mathbb{R}^{N}\setminus\{0\})$ is $D_{\lambda}%
$-homogeneous of degree $d<q-Q$ \emph{(}recall that $N>n$ is the topological
dimension of $\mathbb{G}$, $Q$ is its homogeneous dimension and $D_{\lambda}$
is as in \eqref{eq:Dlambdalift}\emph{)}, then for every $x\neq y\in
\mathbb{R}^{n}$ and every $1\leq i\leq m$ we have}
\[
\int_{\mathbb{R}^{p}}X_{i}\bigg\{x\mapsto h\Big((y,0)^{-1}\ast(x,\xi
)\Big)\bigg\}\,d\xi=\int_{\mathbb{R}^{p}}\big(\widetilde{X}_{i}%
h\big)\Big((y,0)^{-1}\ast(x,\xi)\Big)\,d\xi
\]
(\emph{here, $\widetilde{X}_{i}$ is the lifting of $X_{i}$ given in Theorem
\ref{thm:Adrift}}).
\end{proof}

\medskip

\begin{corollary}
For every integer $r\geq0$ there exists $c>0$ such that, for every
$x,y\in\mathbb{R}^{n}$ \emph{(}with $x\neq y$\emph{)}, one has
\begin{equation}
\left\vert Z_{1}\cdots Z_{h }\Gamma(x;y)\right\vert \leq c\int_{\mathbb{R}%
^{p}}d_{\widetilde{X}}^{\nu-Q-r}\left(  (x,0)^{-1}\ast(y,\eta)\right)  d\eta,
\label{stimadel_THEO0}%
\end{equation}
for any choice of $Z_{1},\ldots,Z_{h}$ \emph{(}with $h\leq r$\emph{)} in
\[
\left\{  X_{1}^{x},X_{2}^{x},...,X_{m}^{x},X_{1}^{y},X_{2}^{y},...,X_{m}%
^{y}\right\}  ,
\]
satisfying $\sum_{i = 1}^{h} |Z_{i}| = r$ \emph{(}with $|Z_{i}| = \nu_{k_{i}}$
if $Z_{i} = X_{k_{i}}$ for some $1\leq k_{i}\leq m$\emph{)}. As in the
statement of Theorem \ref{Thm estimates on Gamma}, we understand that
\[
Z_{1}\cdots Z_{h}\Gamma(x,y) = \Gamma(x,y)\quad\text{if $r = 0$}.
\]
In particular, for every $x,y\in\mathbb{R}^{n}$ \emph{(}with $x\neq y$\emph{)}
and $r\geq0$, the function%
\[
\eta\mapsto d_{\widetilde{X}}^{\nu-Q-r}\left(  (x,0)^{-1}\ast(y,\eta)\right)
\]
belongs to $L^{1}\left(  \mathbb{R}^{p}\right)  .$
\end{corollary}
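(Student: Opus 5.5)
The plan is to combine the representation formulas of Theorem \ref{thm:ReprFormulaDerivatives} with the global estimates of Theorem \ref{thm:PropertiesGammGroups}, applied to the lifted fundamental solution $\widetilde{\Gamma}_0$ on $\mathbb{G}$. On $\mathbb{G}$ the function $\|z\|=d_{\widetilde X}(z,0)$ is a homogeneous norm, so (\ref{eq:GammazeroEstimate})--(\ref{eq:XIGammazeroEstimate}) read $|\widetilde X_I\widetilde\Gamma_0(z)|\le c\, d_{\widetilde X}(z,0)^{\nu-|I|-Q}$. Here we write $d_{\widetilde X}(z)$ for $d_{\widetilde X}(z,0)$, as in the statement. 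The same bounds hold for $\widetilde\Gamma_0^\ast$, by point (6) of that theorem, since $\widetilde\Gamma_0^\ast(z)=\widetilde\Gamma_0(z^{-1})$ and $\|z^{-1}\|\le c\|z\|$.

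The first step is the case $r=0$. We write $\Gamma(x,y)=\Gamma^\ast(y,x)$ using (V), and by (\ref{eq:defGammaLiftingRF}) applied to $\mathcal L^\ast$ we get $\Gamma(x,y)=\int\widetilde\Gamma_0^\ast((x,0)^{-1}\star(y,\eta))\,d\eta$. The pointwise bound on $\widetilde\Gamma_0^\ast$ then gives (\ref{stimadel_THEO0}).

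For $r>0$ we split into three cases, according to whether the $Z_i$ are only $y$-derivatives, only $x$-derivatives, or mixed. We first reorder the fields: $X^x$ and $X^y$ act on different variables and therefore commute, so any product can be put in the form $X^y_{J}X^x_{I}$.
\begin{itemize}
\item[(i)] \emph{Only $y$-derivatives.} The second formula of Theorem \ref{thm:ReprFormulaDerivatives} applies directly, together with the bound on $\widetilde X_J\widetilde\Gamma_0^\ast$, which has degree $\nu-|J|-Q=\nu-r-Q$.
\item[(ii)] \emph{Mixed derivatives.} We use the third formula. The integrand $\widetilde X_J\big((\widetilde X_I\widetilde\Gamma_0)\circ\iota\big)$ is smooth away from $0$ and $D_\lambda$-homogeneous of degree $\nu-|I|-|J|-Q=\nu-r-Q$, because $\iota$ commutes with $D_\lambda$. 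A smooth function of degree $d$ homogeneous outside the origin is bounded by $c\|z\|^{d}$: take its maximum on the unit sphere and rescale. This gives the required bound.
\item[(iii)] \emph{Only $x$-derivatives.} The first formula produces an integral of the form $\int(\widetilde X_I\widetilde\Gamma_0)((y,0)^{-1}\star(x,\xi))\,d\xi$. We convert it to the required form with the change of variables $\xi=\Phi_{x,y}(\zeta)$ from Remark \ref{rem:CambiDiVariabile}, which has Jacobian $\pm1$ and satisfies (\ref{eq:CVPhiIdentity}). It turns the integrand into $(\widetilde X_I\widetilde\Gamma_0)((y,\zeta)^{-1}\star(x,0))=\big((\widetilde X_I\widetilde\Gamma_0)\circ\iota\big)((x,0)^{-1}\star(y,\zeta))$. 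Finally we use the quasi-symmetry $d_{\widetilde X}(z^{-1})\le c\,d_{\widetilde X}(z)$ (in fact $d_{\widetilde X}$ is symmetric). Since the exponent $\nu-r-Q$ is negative, the quasi-symmetry has to be applied in the correct direction: a lower bound on $d(z^{-1})$ in terms of $d(z)$ is what is needed, and this holds because $c^{-1}\|z\|\le\|z^{-1}\|\le c\|z\|$.
\end{itemize}

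The integrability claim at the end of the Corollary is the main point to check. The exponent is $d=\nu-Q-r$, and we need $d<q-Q$, i.e.\ $\nu-r<q$, which holds since $\nu<q$. We follow the proof of (P1) in Proposition \ref{prop:P1P2}, now with the change of variable $\Psi$ adapted to $(x,0)^{-1}\star(y,\eta)$. On the compact set $\{\varrho_E\le1\}$ the integrand is continuous, because $x\neq y$ means the argument never vanishes. On $\{\varrho_E>1\}$ we use $d_{\widetilde X}\ge c\,\varrho_D\ge c\,\varrho_E(\text{projection})$, which gives domination by $\varrho_E^{\nu-Q-r}$. The latter is integrable exactly as in (\ref{eq:stimaIntermediaI2daUsare}), because $\nu-r-Q+\mathcal E=\nu-r-q<0$.

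The step I expect to be most delicate is (iii): making sure that the change of variables and the inversion land on exactly the expression $(x,0)^{-1}\star(y,\eta)$ stated in (\ref{stimadel_THEO0}), with the comparison of quasi-norms used in the correct direction.
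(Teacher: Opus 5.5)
Your proposal is correct and follows essentially the same route as the paper, which defers to the argument of Biagi--Bonfiglioli--Bramanti. That argument takes the representation formulas of Theorem~\ref{thm:ReprFormulaDerivatives}, moves the $x$-derivative formula to the argument $(x,0)^{-1}\ast(y,\eta)$ via the change of variable $\xi=\Phi_{x,y}(\zeta)$ and inversion, and combines them with the homogeneous bounds $|\widetilde{X}_I\widetilde{\Gamma}_0(z)|\le c\,d_{\widetilde{X}}(z)^{\nu-|I|-Q}$ and their analogues for $\widetilde{\Gamma}_0^{\ast}$. The final integrability claim is obtained, as you do, from the proof of (P1), since $\nu-Q-r+\mathcal{E}=\nu-r-q<0$.
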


\begin{proof}
Inequality \eqref{stimadel_THEO0} can be proved by arguing as in
\cite[Prop.\,4.6]{BBB-fundsol} (see also \cite[Sec.\,9]{BBB-fundsol}), using
Theorem \ref{thm:ReprFormulaDerivatives} and taking into account the following
facts: \medskip

i)\,\,$\widetilde{\Gamma}_{0}$ is $D_{\lambda}$-homogeneous of degree $\nu-Q$;
\vspace{0.1cm}

ii)\,\,each $\widetilde{X}_{i}$ is $D_{\lambda}$-homogeneous of degree
$\nu_{i}$ \medskip

\noindent The last assertion has been proved in Proposition \ref{prop:P1P2}
(namely, this follows from (\ref{eq:ToProveP1})), keeping into account the
equivalence of $d_{\widetilde{X}}$ and $\varrho_{D}$.
\end{proof}

\medskip

Recall that, in order to build a fundamental solution for $\mathcal{L}$, we
have assumed%
\[
\nu<q\left(  <Q\right)  .
\]
In order to bound the integral in (\ref{stimadel_THEO0}) with a more
transparent quantity, arriving at the estimates contained in Theorem
\ref{Thm estimates on Gamma}, we now need to strengthen our assumption on
$\nu$, \emph{or }assume $r$ is large enough: see condition (\ref{vincolo su r}%
). We can now give the

\bigskip

\begin{proof}
[Proof of Theorem \ref{Thm estimates on Gamma}]Point (1) follows from
(\ref{stimadel_THEO0}) following the reasoning in \cite[Proof of Prop.\,4.7
and Prop.\,4.8]{BBB-fundsol}, while Point (2) can be proved as in
\cite[Sec.\,6]{BBB-fundsol}.
\end{proof}

\subsection{Extension to heat-type operators\label{sec heat type operators}}

Here we briefly discuss some extensions of our theory to heat-type operators
of the kind%
\[
\mathcal{H}=\mathcal{L}\pm\partial_{t}%
\]
defined on $\mathbb{R}^{n+1}\ni\left(  x,t\right)  $ where $\mathcal{L}$
(acting on the $x$ variable) is a generalized Rockland operator on
$\mathbb{R}^{n}$. If $\mathcal{L}$ is $\nu$-homogeneous w.r.t.\,dilations
$\left\{  \delta_{\lambda}\right\}  _{\lambda}$ in $\mathbb{R}^{n}$, then
$\mathcal{H}$ will be $\nu$-homogeneous w.r.t.\,dilations%
\begin{equation}
\delta_{\lambda}^{\prime}\left(  x,t\right)  =\left(  \delta_{\lambda}\left(
x\right)  ,\lambda^{\nu}t\right)  \label{dilations delta primo}%
\end{equation}
in $\mathbb{R}^{n+1}$. If $\widetilde{\mathcal{L}}$ is the Rockland operator
obtained by lifting $\mathcal{L}$ to the homogeneous group $\mathbb{G}=\left(
\mathbb{R}^{N},\star,\left\{  D_{\lambda}\right\} _{\lambda}\right)  $ (see
Theorem \ref{thm:Adrift}), then%
\[
\widetilde{\mathcal{H}}=\widetilde{\mathcal{L}}\pm\partial_{t}%
\]
will be a lifting of $\mathcal{H}$ on the homogeneous group
\[
\mathbb{G}^{\prime}=(\mathbb{R}^{N+1},\star^{\prime},\left\{  D_{\lambda
}^{\prime}\right\}  _{\lambda}),
\]
where (setting $u=\left(  u^{\prime},u_{N+1}\right)  ,v=\left(  v^{\prime
},v_{N+1}\right)  \in\mathbb{R}^{N+1}$),
\begin{align*}
&  u\star^{\prime}v=\left(  u^{\prime}\star v^{\prime},u_{N+1}+v_{N+1}\right)
;\\
&  D_{\lambda}^{\prime}\left(  u^{\prime},u_{N+1}\right)  =\left(  D_{\lambda
}\left(  u^{\prime}\right)  ,\lambda^{\nu}u_{N+1}\right)  .
\end{align*}
Let us now suppose that, moreover, $\widetilde{\mathcal{L}}$ is a
\emph{positive }Rockland operator, in the sense of \cite[Sec.\,4.2]{FR}, that
is:
\begin{equation}%
\begin{tabular}
[c]{c}%
$\widetilde{\mathcal{L}}$ is formally self-adjoint and satisfies\\[0.2cm]%
$\int_{\mathbb{R}^{N}}\widetilde{\mathcal{L}}f(x)\cdot\overline{f\left(
x\right)  }dx\geq0\text{ for every }f\in C_{0}^{\infty}\left(  \mathbb{R}%
^{N}\right)  .$%
\end{tabular}
\ \label{positive Rockland}%
\end{equation}
Then, by \cite[Lem.\,4.2.11]{FR} the (lifted) operators
$\widetilde{\mathcal{H}}=\widetilde{\mathcal{L}}\pm\partial_{t}$ are Rockland
operators on $\mathbb{G}^{\prime}$, hence $\mathcal{H}=\mathcal{L}\pm
\partial_{t}$ are generalized Rockland operators on $\mathbb{R}^{n+1}$,
homogeneous of degree $\nu$. We can therefore apply to the heat-type operators
$\mathcal{L}\pm\partial_{t}$ our previous theory. If $q$ is the homogeneous
dimension of $\mathbb{R}^{n}$ w.r.t.\,the dilations $\left\{  \delta_{\lambda
}\right\}  $, then $q+\nu$ will be the homogeneous dimension of $\mathbb{R}%
^{n+1}$ w.r.t.\,the dilations
\[
\delta_{\lambda}^{\prime}\left(  x,t\right)  =\left(  \delta_{\lambda}\left(
x\right)  ,\lambda^{\nu}t\right)  .
\]
Note that the condition $\nu<q$ in Theorem \ref{thm:ExistenceEstimLhGeneral}
and $r\geq\nu-n$ in Theorem \ref{Thm estimates on Gamma} now read,
respectively, $\nu<q+\nu$ (which is \emph{always} satisfied) and $r\geq
\nu-n-1$, which is \emph{weaker} than $r\geq\nu-n$.

Before stating our conclusion, let us give the following:

\begin{definition}
Let $\mathcal{L}$ be a generalized Rockland operator. We say that
$\mathcal{L}$ is a \emph{positive }generalized Rockland operator if its lifted
operator $\widetilde{\mathcal{L}}$ (in the sense of Section \ref{sec lifting})
is a positive\emph{ }Rockland operator in the sense of
(\ref{positive Rockland}).
\end{definition}

Then, with the above discussion we have proved the following:

\begin{theorem}
Let%
\[
\mathcal{L}=\sum_{|I|=\nu}c_{I}X_{I}%
\]
be a positive generalized Rockland operator on $\mathbb{R}^{n}$, with $\nu$
the homogeneity degree of $\mathcal{L}$ and $q$ the homogeneous dimension of
$\mathbb{R}^{n}$. Let us consider the heat-type operator%
\[
\mathcal{H}=\mathcal{L}\pm\partial_{t},
\]
which is $\nu$-homogeneous on $\mathbb{R}^{n+1}$, w.r.t.\,the dilations
(\ref{dilations delta primo}). Then:

\begin{itemize}
\item[i)] the operator $\mathcal{H}$ is a generalized Rockland operator, in
particular it is hypoelliptic in $\mathbb{R}^{n+1}$ and satisfies a Liouville
property as in Theorem \ref{Thm main Liouville}.

\item[ii)] the operator $\mathcal{H}$ possesses a global fundamental solution
$\Gamma((x,t),(y,s)) $ join\-tly $\delta_{\lambda}^{\prime}$-homogeneous of
degree $-q$ and satisfying the other properties stated in Theorem
\ref{thm:ExistenceEstimLhGeneral}.

\item[iii)] let $r$ be a nonnegative integer such that
\[
r\geq\nu-n-1.
\]
Then $\Gamma\left(  \left(  x,t\right)  ,\left(  y,s\right)  \right)  $
satisfies the pointwise bounds expressed by Theorem
\ref{Thm estimates on Gamma} with
\[%
\begin{tabular}
[c]{c}%
$q$\\
$\nu$\\
$n$%
\end{tabular}
\text{ in Theorem \ref{Thm estimates on Gamma} replaced by }%
\begin{tabular}
[c]{c}%
$q+\nu$\\
$\nu$\\
$n+1$%
\end{tabular}
,
\]
where now $Z_{1},\ldots,Z_{h}$ are chosen in%
\[
\left\{  X_{1}^{x},X_{2}^{x},...,X_{m}^{x},\partial_{t},X_{1}^{y},X_{2}%
^{y},...,X_{m}^{y},\partial_{s}\right\}  ,
\]
the vector fields $\partial_{t},\partial_{s}$ have \emph{weight }$\nu$ and, as
in Theorem \ref{Thm estimates on Gamma}, the number $r=\sum_{i=1}%
^{h}\left\vert Z_{i}\right\vert $ is the total weight of the differential
operator $Z_{1}\cdots Z_{h}$.
\end{itemize}
\end{theorem}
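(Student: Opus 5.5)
The plan is to reduce everything to the theory already developed, by regarding $\mathcal{H}=\mathcal{L}\pm\partial_t$ as an operator built on the enlarged system of vector fields $X_1,\dots,X_m,X_{m+1}:=\partial_t$ on $\mathbb{R}^{n+1}$.

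\emph{Step 1: Assumption (H1) for the enlarged system.} Each $X_i$ is $\delta'_\lambda$-homogeneous of degree $\nu_i$, since it does not act on $t$. The field $\partial_t$ is $\delta'_\lambda$-homogeneous of degree $\nu$. H\"ormander's condition at the origin holds because the brackets of the $X_i$ span $\mathbb{R}^n$ and $\partial_t$ supplies the last direction. Linear independence is clear, since $\partial_t$ is not a combination of fields acting only in $x$. Reordering the exponents so that they are increasing is harmless. With weight $\nu$ assigned to $X_{m+1}$, the operator $\mathcal{H}=\sum_{|I|=\nu}c_IX_I\pm X_{m+1}$ is of the form \eqref{eq:GeneralLGlobalProperties}, and its homogeneous dimension is $q+\nu$. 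We have $\mathrm{Lie}(X_1,\dots,X_{m+1})=\mathrm{Lie}(X)\oplus\mathbb{R}\partial_t$, of dimension $N+1$, so that $p$ is unchanged and \eqref{eq:pgeqone} holds.

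\emph{Step 2: the lifting.} The lifting of Theorem \ref{thm:Adrift} for the enlarged system can be taken to be $\widetilde X_1,\dots,\widetilde X_m,\partial_t$ on $\mathbb{G}'$. Indeed, $\partial_t$ is left-invariant and $D'_\lambda$-homogeneous of degree $\nu$ for the product structure $\star'$. The relation $\widetilde{X}_j=X_j+R_j$ is inherited from Theorem \ref{thm:Adrift}, with $R_{m+1}=0$. I expect this to be the main obstacle. Theorem \ref{thm:Adrift} asks each $R_j$ to be non-vanishing, so I must check what the later arguments really use. In Lemma \ref{lem:AggiuntoOperaInxi}, Proposition \ref{prop:S1S2vere} and Proposition \ref{prop:P1P2}, a zero $R_{m+1}$ only removes terms. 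The property that every summand of $\mathcal{R}^*$ acts in $\xi$ still holds, because $\mathcal{R}=\widetilde{\mathcal{L}}-\mathcal{L}$ is unchanged. The change-of-variable maps $\Psi,\Phi$ of Remark \ref{rem:CambiDiVariabile} are those of $\mathbb{G}$, with $t$ carried along as a translation, so their Jacobians are still $\pm1$. Hence the saturation machinery applies verbatim, and $\widetilde{\mathcal{H}}=\widetilde{\mathcal{L}}\pm\partial_t$ lifts $\mathcal{H}$ in the sense of \eqref{eq:LhtildeliftsLh}.

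\emph{Step 3: conclusions.} By positivity \eqref{positive Rockland} and \cite[Lem.\,4.2.11]{FR}, both $\widetilde{\mathcal{H}}$ and $(\widetilde{\mathcal{H}})^*=\widetilde{\mathcal{L}}\mp\partial_t$ are Rockland operators on $\mathbb{G}'$. Thus $\mathcal{H}$ and $\mathcal{H}^*$ are generalized Rockland operators in the sense of Definition \ref{Def gen Rockland 2}. Theorems \ref{Thm main hypoellipticity} and \ref{Thm main Liouville} then give i). The condition $\nu<q+\nu$ is automatic, so Theorem \ref{thm:ExistenceEstimLhGeneral} gives ii), with homogeneity degree $\nu-(q+\nu)=-q$. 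Finally, Theorem \ref{Thm estimates on Gamma} applied in dimension $n+1$, with homogeneous dimension $q+\nu$ and the enlarged family of derivatives in which $\partial_t,\partial_s$ have weight $\nu$, gives iii) under $r\geq\nu-(n+1)$. Here the control distance and the ball volumes are those of the enlarged system, and Proposition \ref{rem:dXBalls} applies to them because Step 1 established (H1).
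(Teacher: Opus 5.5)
Your proposal is correct and follows essentially the same route as the paper. You regard $\mathcal{H}$ as built on $X_1,\dots,X_m,\partial_t$, lift it to $\widetilde{\mathcal{L}}\pm\partial_t$ on the product group $\mathbb{G}'$, and use positivity of $\widetilde{\mathcal{L}}$ together with Fischer--Ruzhansky's Lemma 4.2.11 to get that both signs are Rockland; this covers $\widetilde{\mathcal{H}}$ and its transpose. You then apply the earlier theorems with $(q,\nu,n)$ replaced by $(q+\nu,\nu,n+1)$. Your explicit checks fill in points the paper leaves implicit: that the enlarged system satisfies (H1) with $p$ unchanged, and that neither the vanishing remainder $R_{m+1}=0$ nor the $t$-translated maps $\Psi,\Phi$ affect the saturation argument.
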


Let us specialize the previous discussion to the case of operators (\ref{Rock}).

\begin{theorem}
\label{Thm heat operator}Let $X_{1},...,X_{m}$ be a family of vector fields in
$\mathbb{R}^{n}$ satisfying Assumption (H1) and (with the same notation of
Assumption (H1)) for any positive integer $\nu_{0}$, common multiple of
$\nu_{1},\nu_{2},...,\nu_{m}$, let us consider the operators, defined in
$\mathbb{R}^{n+1}\ni\left(  x,t\right)  ,$
\begin{equation}
\mathcal{H}=\sum_{j=1}^{m}\left(  -1\right)  ^{\frac{\nu_{0}}{\nu_{j}}}%
X_{j}^{\frac{2\nu_{0}}{\nu_{j}}}\pm\partial_{t}, \label{heat type}%
\end{equation}
which is $2\nu_{0}$-homogeneous in $\mathbb{R}^{n+1}$ w.r.t.\,the dilations%
\[
\delta_{\lambda}^{\prime}\left(  x,t\right)  =\left(  \delta_{\lambda}\left(
x\right)  ,\lambda^{2\nu_{0}}t\right)  .
\]
Then:

\begin{itemize}
\item[i)] the operator $\mathcal{H}$ is hypoelliptic in $\mathbb{R}^{n+1}$ and
satisfies the Liouville property expressed in Theorem \ref{Thm main Liouville}.

\item[ii)] the operator $\mathcal{H}$ possesses a global fundamental solution
$\Gamma\left(  \left(  x,t\right)  ,\left(  y,s\right)  \right)  $ join\-tly
$\delta_{\lambda}^{\prime}$-homogeneous of degree $-q$ and satisfying the
other properties stated in Theorem \ref{thm:ExistenceEstimLhGeneral}.

\item[iii)] Let $r$ be a nonnegative integer such that
\[
r\geq2\nu_{0}-n-1.
\]
Then $\Gamma\left(  \left(  x,t\right)  ,\left(  y,s\right)  \right)  $
satisfies the pointwise bounds expressed by Theorem
\ref{Thm estimates on Gamma} with%
\begin{equation}%
\begin{tabular}
[c]{c}%
$q$\\
$\nu$\\
$n$%
\end{tabular}
\text{ in Theorem \ref{Thm estimates on Gamma} replaced by }%
\begin{tabular}
[c]{c}%
$q+2\nu_{0}$\\
$2\nu_{0}$\\
$n+1$%
\end{tabular}
\label{changed parameters}%
\end{equation}
where $Z_{1},\ldots,Z_{h}$ are chosen in%
\[
\left\{  X_{1}^{x},X_{2}^{x},...,X_{m}^{x},\partial_{t},X_{1}^{y},X_{2}%
^{y},...,X_{m}^{y},\partial_{s}\right\}  ,
\]
the vector fields $\partial_{t},\partial_{s}$ have \emph{weight }$2\nu_{0}$
and $r=\sum_{i=1}^{h}\left\vert Z_{i}\right\vert $ is the total weight of the
differential operator $Z_{1}\cdots Z_{h}$.
\end{itemize}
\end{theorem}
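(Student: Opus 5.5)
The plan is to reduce Theorem \ref{Thm heat operator} to the preceding theorem on positive generalized Rockland operators. The only real work is to check that
\[
\mathcal{L}=\sum_{j=1}^{m}(-1)^{\nu_0/\nu_j}X_j^{2\nu_0/\nu_j}
\]
is a \emph{positive} generalized Rockland operator, homogeneous of degree $\nu=2\nu_0$. Once this is known, items i)--iii) follow at once with $\nu=2\nu_0$: the homogeneous dimension becomes $q+2\nu_0$, the degree of $\Gamma$ is $\nu-(q+\nu)=-q$, and the threshold is $r\geq 2\nu_0-n-1$.

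\textbf{Homogeneity.} Each $X_j^{2\nu_0/\nu_j}$ has weight $\frac{2\nu_0}{\nu_j}\,\nu_j=2\nu_0$, so $\mathcal{L}$ is of the form (\ref{eq:GeneralLGlobalProperties}) with $\nu=2\nu_0$.

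\textbf{Generalized Rockland.} The lifted operator is
\[
\widetilde{\mathcal{L}}=\sum_j(-1)^{\nu_0/\nu_j}\widetilde{X}_j^{2\nu_0/\nu_j}.
\]
The $\widetilde X_j$ are left-invariant on $\mathbb{G}$, $D_\lambda$-homogeneous of degree $\nu_j$, and generate $\mathrm{Lie}(\mathbb{G})$ by Theorem \ref{thm:Adrift}. Hence $\widetilde{\mathcal{L}}$ is of type (\ref{Rock}) and satisfies the Rockland condition by \cite[Lem.\,4.1.8]{FR}. In particular it is hypoelliptic, so $\mathcal{L}$ satisfies Definition \ref{Def gen Rockland 2}.

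\textbf{Positivity.} Since $\widetilde X_j^\ast=-\widetilde X_j$ (the argument of Remark \ref{rem:ConseguenzeH1H2} applies on $\mathbb{G}$, or use left-invariance together with bi-invariance of Lebesgue measure), write $k_j=\nu_0/\nu_j$. Then
\[
\big((-1)^{k_j}\widetilde X_j^{2k_j}\big)^\ast=(-1)^{k_j}(-1)^{2k_j}\widetilde X_j^{2k_j},
\]
so $\widetilde{\mathcal{L}}^\ast=\widetilde{\mathcal{L}}$. For $f\in C_0^\infty(\mathbb{R}^N)$, integrating by parts $k_j$ times gives
\[
\int(-1)^{k_j}\widetilde X_j^{2k_j}f\cdot\overline f
=(-1)^{k_j}(-1)^{k_j}\int \widetilde X_j^{k_j}f\cdot\overline{\widetilde X_j^{k_j}f}
=\int|\widetilde X_j^{k_j}f|^2\geq0.
\]
Summing over $j$ yields (\ref{positive Rockland}).

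\textbf{Conclusion.} With positivity established, \cite[Lem.\,4.2.11]{FR} shows that $\widetilde{\mathcal{L}}\pm\partial_t$ are Rockland operators on $\mathbb{G}'$. This is exactly the hypothesis of the preceding theorem, and we apply it.

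The adjoint of the lifted heat operator is $\widetilde{\mathcal{L}}\mp\partial_t$, which is again Rockland by the same lemma. So the hypothesis that both $\mathcal{H}$ and $\mathcal{H}^\ast$ are generalized Rockland, needed in Theorems \ref{Thm main Liouville}, \ref{thm:ExistenceEstimLhGeneral} and \ref{Thm estimates on Gamma}, holds. The condition $\nu<q+\nu$ is automatic. Finally, the time variable is left unlifted, and the lifting $(\widetilde X_j,\partial_t)$ of $(X_j,\partial_t)$ still has the form required by Theorem \ref{thm:Adrift}, so the saturation argument of Section \ref{sec existence fund sol} carries over verbatim.

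The main obstacle is the positivity step: checking the sign bookkeeping in the integration by parts, and that $\widetilde X_j$ is skew-adjoint on $\mathbb{G}$. The latter I would derive from Lebesgue measure being a bi-invariant Haar measure, so that left-invariant vector fields are divergence-free.
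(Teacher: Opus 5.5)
Your proposal is correct and follows the paper's proof, which likewise reduces the statement to the preceding theorem on positive generalized Rockland operators by checking that $\widetilde{\mathcal{L}}=\sum_{j}(-1)^{\nu_0/\nu_j}\widetilde{X}_j^{2\nu_0/\nu_j}$ is formally self-adjoint and satisfies $\int_{\mathbb{R}^N}\widetilde{\mathcal{L}}f\cdot\overline{f}\,dx=\sum_j\int_{\mathbb{R}^N}|\widetilde{X}_j^{\nu_0/\nu_j}f|^2\,dx\geq 0$. Your extra checks are correct and are points the paper leaves implicit: hypoellipticity of $\widetilde{\mathcal{L}}$ via \cite[Lem.\,4.1.8]{FR}, skew-adjointness of the $\widetilde{X}_j$, and the fact that $\mathcal{H}^\ast=\mathcal{L}\mp\partial_t$ is again generalized Rockland.
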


\begin{proof}
It is enough to check that%
\[
\mathcal{L}=\sum_{j=1}^{m}\left(  -1\right)  ^{\frac{\nu_{0}}{\nu_{j}}}%
X_{j}^{\frac{2\nu_{0}}{\nu_{j}}}%
\]
is a \emph{positive }Rockland operator. Actually, denoting by
\[
\widetilde{\mathcal{L}} = \sum_{j=1}^{m}\left(  -1\right)  ^{\frac{\nu_{0}%
}{\nu_{j}}}\widetilde{X}_{j}^{\frac{2\nu_{0}}{\nu_{j}}}%
\]
the lifted operator of $\mathcal{L}$, we see that $\widetilde{\mathcal{L}}$ is
formally self-adjoint and%
\begin{align*}
\int_{\mathbb{R}^{N}} \widetilde{\mathcal{L}}f\left(  x\right)  \cdot
\overline{f\left(  x\right)  }dx  &  =\sum_{j=1}^{m}\int_{\mathbb{R}^{N}%
}\left(  -1\right)  ^{\frac{\nu_{0}}{\nu_{j}}}\left(  \widetilde{X}_{j}%
^{\frac{2\nu_{0}}{\nu_{j}}}f\right)  \left(  x\right)  \overline{f\left(
x\right)  }dx\\
&  =\sum_{j=1}^{m}\int_{\mathbb{R}^{N}}\left\vert \widetilde{X}_{j}^{\frac
{\nu_{0}}{\nu_{j}}}f\right\vert ^{2}\left(  x\right)  dx\geq0.
\end{align*}
Then the assumptions of Theorem \ref{Thm heat operator} are satisfied.
\end{proof}

\bigskip

\noindent\textbf{Acknowledgements.} We warmly thank the anonymous Referees for
their careful reading of the manu\-script and for their valuable comments,
which lead us to improve the paper.

The Authors are members of the research group \textquotedblleft Grup\-po
Na\-zio\-nale per l'Analisi Matematica, la Probabilit\`{a} e le loro
Applicazioni\textquotedblright\ of the Italian \textquotedblleft Istituto
Na\-zio\-na\-le di Alta Matematica\textquotedblright. The first Author is
partially supported by the PRIN 2022 project 2022R537CS \emph{$NO^{3}$ - Nodal
Optimization, NOnlinear elliptic equations, NOnlocal geometric problems, with
a focus on regularity}, funded by the European Union - Next Generation EU; the
second Author is partially supported by the PRIN 2022 project \emph{Partial
differential equations and related geometric-functional inequalities},
financially supported by the EU, in the framework of the \textquotedblleft
Next Generation EU initiative\textquotedblright.

\end{document}